\theoremstyle{plain}
\newtheorem{theorem}{Theorem}[section]
\newtheorem{corollary}[theorem]{Corollary}
\newtheorem{proposition}[theorem]{Proposition}
\newtheorem{lemma}[theorem]{Lemma}
\newtheorem{definition}[theorem]{Definition}
\newtheorem{hyp}{Assumption}
\theoremstyle{remark}
\newtheorem{remark}[theorem]{Remark}
\newtheorem{example}[theorem]{Example}
\newlist{propenum}{enumerate}{1} 
\setlist[propenum]{label=(\roman*)}
\DeclareMathOperator\spec{Spec}               
\DeclarePairedDelimiter\abs{\lvert}{\rvert}   
\DeclarePairedDelimiter\norm{\lVert}{\rVert}  
\newcommand{\ind}[1]{\mathds{1}_{#1}}
\newcommand{\kk}{\ensuremath{\mathrm{k}}}
\newcommand{\rd}{\ensuremath{\mathrm{d}}}
\newcommand{\ca}{\ensuremath{\mathscr{A}}}
\newcommand{\cl}{\ensuremath{\mathscr{L}^\infty}}
\newcommand{\cll}{\ensuremath{\mathcal{L}}}
\newcommand{\cb}{\ensuremath{\mathscr{B}}}
\newcommand{\ce}{\ensuremath{\mathscr{E}}}
\newcommand{\cf}{\ensuremath{\mathscr{F}}}
\newcommand{\cg}{\ensuremath{\mathscr{G}}}
\newcommand{\ch}{\ensuremath{\mathscr{H}}}
\newcommand{\ck}{\ensuremath{\mathscr{K}}}
\newcommand{\R}{\ensuremath{\mathbb{R}}}
\newcommand{\C}{\ensuremath{\mathbb{C}}}
\newcommand{\N}{\ensuremath{\mathbb{N}}}
\renewcommand{\P}{\ensuremath{\mathbb{P}}}
\newcommand{\E}{\ensuremath{\mathbb{E}}}
\newcommand{\cp}{\ensuremath{\mathcal{P}}}
\newcommand{\Tinf}{\ensuremath{\mathcal{T}}}
\newcommand{\etae}{\ensuremath{\eta^\mathrm{equi}}} 
\newcommand{\etauc}{\ensuremath{\eta^\mathrm{uni}_\mathrm{crit}}}
\newcommand{\etau}{\ensuremath{\eta^\mathrm{uni}}}
\newcommand{\esp}[1]{\mathbb{E}\left[#1\right]}
\newcommand{\gxx}{$[(\Omega, \cf, \mu), \kk]$}
\newcommand{\FF}{\ensuremath{\mathbf{F}}}
\newcommand{\as}{\text{ a.s.}}
\newcommand{\costu}{\ensuremath{C_\mathrm{uni}}} 
\newcommand{\optProb}[3]{%
  \begin{equation}\label{#1}%
    \begin{cases}
      \textbf{Minimize: } & #2 \\
      \textbf{subject to: } & #3
    \end{cases}
  \end{equation}%
}
\newcommand{\optProbminmax}[3]{%
  \begin{equation}\label{#1}%
    \begin{cases}
      \textbf{Minimize: } & #2 \\
      \textbf{subject to: } & #3
    \end{cases}
   \quad  \quad\text{and}\quad \quad \quad 
      \begin{cases}
      \textbf{Maximize: } & #2 \\
      \textbf{subject to: } & #3
    \end{cases}
  \end{equation}%
}
\newcommand{\optProbBis}[5]{%
  \begin{subequations}\label{#1}%
    \begin{alignat}{2}
	  &\! \textbf{Minimize: } & \quad & #4 \label{#2} \\
	  & \textbf{subject to: } & & #5 \label{#3}
        \end{alignat}
  \end{subequations}%
}
\newcommand{\optProbTer}[5]{%
  \begin{subequations}\label{#1}%
    \begin{alignat}{2}
	  &\! \textbf{Maximize: } & \quad & #4 \label{#2} \\
	  & \textbf{subject to: } & & #5 \label{#3}
        \end{alignat}
  \end{subequations}%
}
\date{\today}
\author{Jean-François Delmas}
\address{Jean-François Delmas,
  CERMICS, \'{E}cole des Ponts, France}
\email{jean-francois.delmas@enpc.fr}
\author{Dylan Dronnier}
\address{Dylan Dronnier,
  CERMICS, \'{E}cole des Ponts, France}
\email{dylan.dronnier@enpc.fr}
\author{Pierre-André Zitt}
\address{Pierre-André Zitt, LAMA, Université Gustave Eiffel, France}
\email{pierre-andre.zitt@univ-eiffel.fr}
\newsavebox{\largestimage}
\newcommand{\Cinf}{\ensuremath{C_\star}} 
\newcommand{\loss}{\ensuremath{\mathrm{L}}}
\newcommand{\CinfR}{\ensuremath{C_{\star, R_e}}} 
\newcommand{\CinfI}{\ensuremath{C_{\star, \I}}} 
\newcommand{\CinfL}{\ensuremath{C_{\star, \loss}}} 
\newcommand{\CsupL}{\ensuremath{C^{\star, \loss}}} 
\newcommand{\Csup}{\ensuremath{C^\star}} 
\newcommand{\lossup}{\ensuremath{\loss^\star}} 
\newcommand{\lossinf}{\ensuremath{\loss_\star}} 
\newcommand{\costa}{\ensuremath{C_\mathrm{aff}}} 
\newcommand{\costad}{\ensuremath{c_\mathrm{aff}}} 
\newcommand{\param}{\ensuremath{\mathrm{Param}}}
\newcommand{\maxcost}{c_{\max}}
\newcommand{\maxloss}{\ell_{\max}}
\newcommand{\gxxx}{$[(\Omega, \cf, \mu), k, \gamma]$}
\newcommand{\grR}{R_e[\kk]}
\newcommand{\grS}{\spec[\kk]}
\newcommand{\etainf}{\eta_{\text{\tiny SO}}}
\newcommand{\etasup}{\eta_{\text{\tiny NE}}}
\newcommand{\vg}{\ensuremath{v_\mathrm{g}}}
\newcommand{\vd}{\ensuremath{v_\mathrm{d}}}
\newcommand{\I}{\ensuremath{\mathfrak{I}}}
\newcommand{\un}{\ensuremath{\mathds{1}}}
\title{Targeted vaccination strategies for an infinite-dimensional SIS model}
\begin{document}

\thanks{This work is partially supported by Labex Bézout reference ANR-10-LABX-58}

\subjclass[2010]{92D30, 58E17, 47B34, 34D20}

\keywords{SIS Model, infinite dimensional ODE, kernel operator, vaccination strategy,
 effective reproduction number, multi-objective optimization, Pareto frontier}

\begin{abstract}
  We formalize and study the problem of optimal allocation strategies for a (perfect)
  vaccine in the infinite-dimensional SIS model. The question may be viewed as a
  bi-objective minimization problem, where one tries to minimize simultaneously the cost
  of the vaccination, and a loss that may be either the effective reproduction number, or
  the overall proportion of infected individuals in the endemic state. We prove the
  existence of Pareto optimal strategies for both loss functions.

  We also show that
  vaccinating according to the profile of the endemic state is a critical allocation, in
  the sense that, if the initial reproduction number is larger than 1, then this
  vaccination strategy yields an effective reproduction number equal to~$1$.
\end{abstract}

\maketitle

\section{Introduction}

\subsection{Motivation}

Increasing the prevalence of immunity from contagious disease in a population limits the
circulation of the infection among the individuals who lack immunity. This so-called
``herd effect'' plays a fundamental role in epidemiology as it has had a major impact in
the eradication of smallpox and rinderpest or the near eradication of poliomyelitis; see
\cite{HerdImmunityFine2011}. Targeted vaccination strategies, based on the heterogeneity
of the infection spreading in the population, are designed to increase the level of
immunity of the population with a limited quantity of vaccine. These strategies rely on
identifying groups of individuals that should be vaccinated in priority in order to slow
down or eradicate the disease.

In this article, we establish a theoretical framework to study targeted vaccination
strategies for the deterministic infinite-dimensional SIS model introduced in
\cite{delmas_infinite-dimensional_2020}, that encompasses as particular cases the SIS
model on graphs or on stochastic block models. In  companion papers, we provide a
series of general and specific examples that complete and illustrate the present work: see
Section~\ref{sec:suite} for more detail.

\subsection{Herd immunity and targeted vaccination strategies}\label{subsec:problem}

Let us start by recalling a few classical results in mathematical epidemiology; we refer
to Keeling and Rohani's monograph~\cite{keeling_modeling_2008} for an extensive introduction to
this field, including details on the various classical models (SIS, SIR, etc.)

In an homogeneous population, the basic reproduction number of an infection, denoted
by~$R_0$, is defined as the number of secondary cases one individual generates on average
over the course of its infectious period, in an otherwise uninfected (susceptible)
population. This number plays a fundamental role in epidemiology as it provides a scale to
measure how difficult an infectious disease is to control. Intuitively, the disease should
die out if~$R_0<1$ and invade the population if~$R_0>1$. For many classical mathematical
models of epidemiology, such as SIS or S(E)IR, this intuition can be made rigorous: the
quantity~$R_0$ may be computed from the parameters of the model, and the threshold
phenomenon occurs.

Assuming~$R_0>1$ in an homogeneous population, suppose now that only a proportion~$\etau$
of the population can catch the disease, the rest being immunized. An infected
individual will now only generate~$\etau R_0$ new cases, since a proportion~$(1-\etau)$ of
previously successful infections will be prevented. Therefore, the new \emph{effective
reproduction number} is equal to~$R_e(\etau) = \etau R_0$. This fact led to the recognition
by Smith in 1970 \cite{smith_prospects_1970} and Dietz in 1975
\cite{dietz1975transmission} of a simple threshold theorem: the incidence of an infection
declines if the proportion of non-immune individuals is reduced below~$\etauc = 1/R_0$.
This effect is called \emph{herd immunity}, and the corresponding percentage~$1-\etauc$ of
people that have to be vaccinated is called \emph{herd immunity threshold}; see for
instance~\cite{smith_concepts_2010, somerville_public_2016}.

It is of course unrealistic to depict human populations as homogeneous, and many
generalizations of the homogeneous model have been studied; see \cite[Chapter
3]{keeling_modeling_2008} for examples and further references. For most of these
generalizations, it is still possible to define a meaningful reproduction number~$R_0$, as
the number of secondary cases generated by a \textit{typical} infectious individual when
all other individuals are uninfected; see~\cite{Diekmann1990}. After a vaccination
campaign, let the vaccination strategy $\eta$ denote the (non necessarily homogeneous)
proportion of the \textbf{non-vaccinated} population, and let the effective reproduction
number $R_e(\eta)$ denote the corresponding reproduction number of the non-vaccinated
population. The vaccination strategy $\eta$ is \emph{critical} if
$R_e(\eta) =  1$.
The
possible choices of $\eta$ naturally raises a question that may be expressed as the
following informal optimization problem:
\optProb{eq:informal_optim1}{%
  \text{the quantity of vaccine to administrate}}{\text{herd immunity is reached, that
is,~$R_e\leq 1$.}}
If the quantity of available vaccine is limited, then one is also
interested in:
\optProb{eq:informal_optim2}{%
  \text{the effective reproduction number~$R_e$}}{\text{a given quantity of available
vaccine.}}
Interestingly enough, the strategy~$\etauc$, which consists in delivering the
vaccine \emph{uniformly} to the population, without taking inhomogeneity into account,
leaves a proportion~$\etauc= 1/R_0$ of the population unprotected, and
is therefore critical since~$R_e(\etauc) =1$. In particular it is  admissible for the optimization
problem~\eqref{eq:informal_optim1}.

However, herd immunity may be achieved even if the proportion of unprotected people is
\emph{greater} than~$1/R_0$, by targeting certain group(s) within the population; see
Figure~3.3 in \cite{keeling_modeling_2008}. For example, the discussion of vaccination
control of gonorrhea in~\cite[Section~4.5]{hethcote} suggests that it may be better to
prioritize the vaccination of people that have already caught the
disease: this lead us to consider a vaccination strategy
guided by the equilibrium state. This strategy denoted by~$\etae$ will be defined formally
below. Let us mention here an observation in the same vein made by Britton, Ball and
Trapman in \cite{AMathematicalBritto2020}. Recall that in the S(E)IR model, immunity can
be obtained through infection. Using parameters from real-world data, these authors
noticed that the disease-induced herd immunity level can, for some models, be
substantially lower than the classical herd immunity threshold~$1 - 1/R_0$. This can be
reformulated in term of targeted vaccination strategies: prioritizing the individuals that
are more likely to get infected in a S(E)IR epidemic may be more efficient than
distributing uniformly the vaccine in the population.

\medskip

The main goal of this paper is two-fold: formalize the optimization problems
\eqref{eq:informal_optim1} and~\eqref{eq:informal_optim2} for a particular infinite
dimensional SIS model, recasting them more generally as a bi-objective optimization
problem; and give existence and properties of solutions to this bi-objective problem. We
will also consider a closely related problem, where one wishes to minimize the size of the
epidemic rather than the reproduction number. We will in passing provide insight on the
efficiency of classical vaccination strategies such as~$\etauc$ or~$\etae$.

\subsection{Literature on targeted vaccination strategies}

Targeted vaccination problems have mainly been studied using two different mathematical
frameworks.

\subsubsection{On meta-populations models}\label{sec:metapop}

Problems~\eqref{eq:informal_optim1} and~\eqref{eq:informal_optim2} have been examined in
depth for deterministic \emph{meta-population} models, that is, models in which an
heterogeneous population is stratified into a finite number of homogeneous sub-populations
(by age group, gender, \ldots). Such models are specified by choosing the sizes of the
subpopulations and quantifying the degree of interactions between them, in terms of
various mixing parameters. In this setting,~$R_0$ can often be identified as the spectral
radius of a \emph{next-generation matrix} whose coefficients depend on the subpopulation
sizes, and the mixing parameters. It turns out that the next generation matrices take
similar forms for many dynamics (SIS, SIR, SEIR,...); see the discussion in~\cite[Section
10]{hill-longini-2003}. Vaccination strategies are defined as the levels at which each
sub-population is immunized. After vaccination, the next-generation matrix is changed and
its new spectral radius corresponds to the effective reproduction number~$R_e$.

Problem~\eqref{eq:informal_optim1} has been studied in this setting by Hill and Longini
\cite{hill-longini-2003}. These authors study the geometric properties of the so-called
threshold hypersurface, that is the vaccination allocations for which~$R_e = 1$. They also
compute the vaccination belonging to this surface with minimal cost for an Influenza A
model. Making structural assumptions on the mixing parameters, Poghotayan, Feng, Glasser
and Hill derive  in \cite{poghotanyan_constrained_2018}  an analytical formula for the
solutions of Problem~\eqref{eq:informal_optim2}, for populations
divided in two groups. Many papers also
contain numerical studies of the optimization problems~\eqref{eq:informal_optim1}
and~\eqref{eq:informal_optim2} on real-world data using gradient techniques or similar
methods; see for example \cite{DistributionOfGoldst2010, feng_elaboration_2015,
TheMostEfficiDuijze2016, feng_evaluating_2017, IdentifyingOptZhao2019}.

\medskip

Finally, the effective reproduction number is not the only reasonable way of quantifying a
population's vulnerability to an infection. For an SIR infection for example, the
proportion of individuals that eventually catch (and recover from) the disease, often
referred to as the \emph{attack rate}, is broadly used. We refer
to~\cite{TheMostEfficiDuijze2016, DoseOptimalVaDuijze2018} for further discussion on this
topic.

\subsubsection{On networks}

Whereas the previously cited works typically consider a small number of subpopulations,
often with a ``dense'' structure of interaction (every subpopulation may directly infect
all the others), other research communities have looked into a similar problem for graphs.
Indeed, given a (large), possibly random graph, with epidemic dynamics on it, and
supposing that we are able to suppress vertices by vaccinating, one may ask for the best
way to choose the vertices to remove.

The importance of the spectral radius of the network has been rapidly identified as its
value determines if the epidemic dies out quickly or survives for a long time
\cite{TheEffectOfNGaneshNone, CharacterizingRestre2006}. Since Van Mieghem \textit{et al.}
proved in~\cite{DecreasingTheVanMi2011} that the problem of minimizing spectral radius of
a graph by removing a given number of vertices is NP-complete (and therefore unfeasible in
practice), many computational heuristics have been put forward to give approximate
solutions; see for example~\cite{ApproximationASaha2015} and references therein.

\subsection{Main results}

The differential equations governing the epidemic dynamics in meta-population~SIS models
were developed by Lajmanovich and Yorke in their pioneer
paper~\cite{lajmanovich1976deterministic}. In \cite{delmas_infinite-dimensional_2020}, we
introduced a natural generalization of their equation, which can also be viewed as the
limit equation of the stochastic SIS dynamic on network, in an infinite-dimensional
space~$\Omega$, where~$x \in \Omega$ represents a feature and the probability
measure~$\mu(\mathrm{d} x)$ represents the fraction of the population with feature~$x$.

\subsubsection{Regularity of the effective reproduction function~$R_e$}

We consider the effective reproduction function in a general operator framework which we
call the \emph{kernel model}. This model is characterized by a probability space~$(\Omega,
\cf, \mu)$ and a measurable non-negative kernel~$\kk: \Omega \times \Omega \rightarrow
\R_+$. Let $T_\kk$ be the corresponding integral operator defined by:
\[
  T_\kk(h)(x) = \int_\Omega \kk(x,y) h(y) \, \mu(\mathrm{d}y).
\]
In the setting of~\cite{delmas_infinite-dimensional_2020} (see in particular Equation~(11)
therein), $T_\kk$ is the so-called \emph{next generation operator}, where the kernel~$\kk$
is defined in terms of a transmission rate kernel $k(x,y)$ and a recovery rate
function~$\gamma$ by the product~$\kk(x,y)=k(x,y)/\gamma(y)$; and the reproduction
number~$R_0$ is then the spectral radius $\rho(T_\kk)$ of~$T_\kk$. \medskip

Following~\cite[Section~5]{delmas_infinite-dimensional_2020}, we represent a vaccination
strategy by a function~$\eta: \Omega \rightarrow [0, 1]$, where~$\eta(x)$ represents the
fraction of \textbf{non-vaccinated} individuals with feature~$x$; the effective
reproduction number associated to~$\eta$ is then given by
\begin{equation}
  \label{eq:def-Re-intro}
  R_e(\eta) = \rho(T_{\kk \eta}),
\end{equation}
where~$\rho$ stands for the spectral radius and~$\kk\eta$ stands for the kernel~$ ( \kk
\eta)(x,y)=\kk(x,y) \eta(y)$. If $R_0\geq 1$, then a  vaccination
strategy~$\eta$ is called \emph{critical} if it achieves precisely the herd immunity threshold, that
is~$R_e(\eta) = 1$. 

In particular, the ``strategy'' that consists in vaccinating no one corresponds to $\eta
\equiv \un$, and of course~$R_e(\un) = R_0$. As the spectral radius is positively homogeneous,
we also get, when~$R_0\geq 1$, that the uniform strategy that corresponds to the constant
function:
\[
  \etauc\equiv\frac{1}{R_0}
\]
is critical, as~$R_e(\etauc)=1$. This
is consistent with results obtained in the homogeneous model given in
Section~\ref{subsec:problem}.

\medskip

Let~$\Delta$ be the set of strategies, that is the set of~$[0,1]$-valued functions defined
on~$\Omega$. The usual technique to obtain the existence of solutions to optimization
problems like~\eqref{eq:informal_optim1} or~\eqref{eq:informal_optim2} is to prove that
the function~$R_e$ is continuous with respect to a topology for which the set of
strategies~$\Delta$ is compact. It is natural to try and prove this continuity by
writing~$R_e$ as the composition of the spectral radius~$\rho$ and the map~$\eta \mapsto
T_{\kk\eta}$. The spectral radius is indeed continuous at compact operators (and $T_{\kk
\eta}$ is in fact compact under a technical integrability assumption on the kernel~$\kk$
formalized on page~\pageref{hyp:k} as Assumption~\ref{hyp:k}), if we endow the set of
bounded operators with the operator norm topology; see \cite{newburgh1951, burlando}.
However, this would require choosing the uniform topology on~$\Delta$,
which then is not
compact.

We instead  endow~$\Delta$ with the weak topology, see Section \ref{sec:weak}, for which
compactness holds; see Lemma~\ref{lem:D-compact}. This forces us to equip the space of
bounded operators with the strong topology, for which the spectral radius is in general
not continuous; see \cite[p.~431]{kato2013perturbation}. However, the family of
operators~$(T_{\kk\eta}, \, \eta \in \Delta)$ is \emph{collectively compact} which enables
us to recover continuity, using a serie of results obtained by Anselone~\cite{anselone}.
This leads to the following result, proved in
Theorem~\ref{th:continuity-R} below. We recall that Assumption~\ref{hyp:k}, formulated on
page~\pageref{hyp:k}, provides an integrability condition on the kernel~$\kk$.

\begin{theorem}[Continuity of the spectral radius]
  Under Assumption~\ref{hyp:k} on the kernel~$\kk$, the function~$R_e \, \colon \, \Delta
  \to \R_+$ is continuous with respect to the weak topology on~$\Delta$.
\end{theorem}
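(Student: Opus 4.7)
The plan is to decompose $R_e = \rho \circ \Phi$, where $\Phi \colon \eta \mapsto T_{\kk\eta}$, and to compensate for the fact that the spectral radius $\rho$ is not continuous on bounded operators equipped with the strong operator topology. The key rescue is Anselone's observation that $\rho$ becomes continuous when restricted to a collectively compact family of operators, provided these operators converge strongly.

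The argument then splits into two ingredients. First, I would show that $\Phi$ is continuous from $\Delta$ (with the weak topology) into bounded operators on a suitable Banach function space $X$ equipped with the strong operator topology. Fixing $h \in X$ and taking $\eta_n \to \eta$ in the weak topology (which should amount to weak-$\ast$ convergence in $L^\infty(\mu)$, i.e.\ testing against functions in $L^1(\mu)$), one obtains the pointwise convergence
\[
T_{\kk\eta_n}(h)(x) = \int_\Omega \kk(x,y) h(y) \eta_n(y) \, \mu(\mathrm{d}y) \longrightarrow T_{\kk\eta}(h)(x)
\]
for $\mu$-almost every $x$, because $y \mapsto \kk(x,y) h(y) \in L^1(\mu)$ by Assumption~\ref{hyp:k}. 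The uniform domination $|T_{\kk\eta_n}(h)| \leq T_\kk(|h|) \in X$ then upgrades pointwise convergence to norm convergence in $X$ via dominated convergence.

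Second, I would establish collective compactness of $\{T_{\kk\eta} : \eta \in \Delta\}$. This rests on the factorization $T_{\kk\eta}(h) = T_\kk(\eta h)$ together with the bound $|\eta h| \leq |h|$, which shows that $\bigcup_{\eta \in \Delta} T_{\kk\eta}(B) \subseteq T_\kk(B')$ for the unit ball $B \subset X$ and some fixed bounded set $B'$. Compactness of $T_\kk$ (itself a consequence of Assumption~\ref{hyp:k}) then gives the desired relative compactness. Combining these two ingredients with Anselone's continuity theorem yields continuity of $R_e$.

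The main obstacle is more conceptual than technical: one must recognize that the natural compact topology on $\Delta$ forces the strong rather than the norm topology on operators, and that the resulting loss of norm continuity of $\rho$ is exactly compensated by the collective compactness built into this family. A secondary technical point is choosing $X$ and the weak topology on $\Delta$ consistently with Assumption~\ref{hyp:k}, and possibly verifying that Anselone's framework, often stated for sequences, applies in the topological setting at hand (which is straightforward if the weak topology on $\Delta$ turns out to be metrizable, e.g.\ under separability of $L^1(\mu)$).
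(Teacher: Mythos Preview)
Your proposal is correct and follows essentially the same route as the paper: collective compactness of $\{T_{\kk\eta}:\eta\in\Delta\}$ via the factorization $T_{\kk\eta}=T_\kk M_\eta$ and compactness of $T_\kk$, strong convergence $T_{\kk\eta_n}\to T_{\kk\eta}$ via pointwise convergence plus dominated convergence, and then Anselone's theorem. The paper works explicitly on $X=L^p$ (with $p$ from Assumption~\ref{hyp:k}), which makes the domination step concrete: $|T_{\kk\eta_n}(g)(x)|\leq K(x)\|g\|_p$ with $K(x)=\|\kk(x,\cdot)\|_q\in L^p$.

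The one place where the paper differs from your sketch is the passage from sequential to topological continuity. You propose metrizability of the weak topology on $\Delta$ under separability of $L^1(\mu)$; the paper avoids any separability hypothesis by invoking Eberlein--\v{S}mulian: since $\Delta$ is weakly compact, it is weakly sequentially compact, and on such sets sequential continuity into a metric space implies continuity (Lemma~\ref{lem:D-compact}). This is a cleaner and more general way to close the gap.
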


In fact, we also prove the continuity of the spectrum with respect to the Hausdorff
distance on the set of compact subsets of~$\C$. We shall write~$R_e[\kk]$ to stress the
dependence of the function~$R_e$ in the kernel~$\kk$. In Proposition \ref{prop:Re-stab},
we prove the stability of~$R_e$, by giving natural sufficient conditions on a sequence of
kernels~$(\kk_n, n\in \N)$ converging to~$\kk$ which imply that~$R_e[\kk_n]$ converges
uniformly towards~$R_e[\kk]$. This result has both theoretical and practical interest: the
next-generation operator is unknown in practice, and has to be estimated from data. Thanks
to this result, the value of $R_e$ computed from the estimated operator should converge to
the true value.

\subsubsection{On the maximal endemic equilibrium in the SIS model}

We consider the \emph{SIS model} from \cite{delmas_infinite-dimensional_2020}. This model
is characterized by a probability space $(\Omega, \cf,\mu)$, the transmission kernel~$k \,
\colon \, \Omega \times \Omega \to \R_+$ and the recovery rate~$\gamma \, \colon \, \Omega \to
\R_+^*$. We suppose in the following that the technical Assumption~\ref{hyp:k-g},
formulated on page~\pageref{hyp:k-g}, holds, so that the SIS dynamical
evolution is well defined.

This evolution is encoded as~$u=(u_t, t\in \R_+)$, where~$u_t\in \Delta$ for all~$t$ and
$u_t(x)$ represents the probability of an individual with feature~$x\in \Omega$ to be
infected at time~$t\geq 0$, and follows the equation:
\begin{equation}
  \label{eq:SIS-intro}
  \partial_t u_t = F(u_t)\quad\text{for } t\in \R_+,
  \quad\text{where}\quad
  F(g) = (1 - g) \Tinf_k (g) - \gamma g\quad\text{for } g\in \Delta,
\end{equation}
with an initial condition~$u_0 \in \Delta$ and with $\Tinf_k$ the integral operator
corresponding to the kernel $k$ acting on the set of bounded measurable
functions, see \eqref{eq:def-Tk}. It is
proved in~\cite{delmas_infinite-dimensional_2020} that such a solution~$u$ exists and is
unique under Assumption~\ref{hyp:k-g}. An \emph{equilibrium} of~\eqref{eq:SIS-intro} is a
function~$g \in \Delta$ such that $F(g) = 0$. According to
\cite{delmas_infinite-dimensional_2020}, there exists a maximal
equilibrium~$\mathfrak{g}$, \textit{i.e.}, an equilibrium such that all other equilibria
$h \in \Delta$ are dominated by~$\mathfrak{g}$:~$h \leq \mathfrak{g}$. Furthermore, we
have~$R_0\leq 1$ if and only if~$\mathfrak{g}=0$. In the connected case (for example if
$k>0$), then~$0$ and~$\mathfrak{g}$ are the only equilibria; besides~$\mathfrak{g}$ is the
long-time distribution of infected individuals in the
population:~$\lim_{t\rightarrow+\infty } u_t = \mathfrak{g}$ as soon as the initial
condition is non-zero; see \cite[Theorem~4.14]{delmas_infinite-dimensional_2020}.

\medskip

As hinted in \cite[Section~4.5]{hethcote} for vaccination control of gonorrhea, it is
interesting to consider vaccinating people with feature $x$ with
probability~$\mathfrak{g}(x)$; this corresponds to the strategy based on the maximal
equilibrium:
\[
 \etae=1 -\mathfrak{g}.
\]
The following result entails that this strategy is critical and thus achieves the herd
immunity threshold. 
Recall that
Assumption~\ref{hyp:k-g}, formulated page~\pageref{hyp:k-g}, provides technical conditions
on the parameters~$k$ and~$\gamma$ of the SIS model. The effective
reproduction number of the SIS model is the function $R_e$ defined in
\eqref{eq:def-Re-intro} with the kernel $\kk=k/\gamma$.

\begin{theorem}[The maximal equilibrium yields a critical vaccination]
 Suppose Assumption~\ref{hyp:k-g} holds. If~$R_0\geq 1$, then the
 vaccination strategy $\etae$ is critical, that is,~$R_e(\etae)=1$.
\end{theorem}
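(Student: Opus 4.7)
The plan is to exhibit an explicit non-negative eigenfunction of $T_{\kk\etae}$ for the eigenvalue $1$, and then, via a Krein-Rutman type positivity argument, to identify $1$ with the spectral radius $\rho(T_{\kk\etae}) = R_e(\etae)$.

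The case $R_0 = 1$ is immediate: the equivalence ``$R_0 \leq 1$ iff $\mathfrak{g} = 0$'' recalled earlier gives $\mathfrak{g} = 0$, so $\etae \equiv \un$ and $R_e(\etae) = R_0 = 1$. Assume henceforth $R_0 > 1$, so $\mathfrak{g} \neq 0$. The equilibrium equation $F(\mathfrak{g})=0$ rewrites as $(1-\mathfrak{g})\,T_k(\mathfrak{g}) = \gamma\,\mathfrak{g}$; combined with $\gamma > 0$ from Assumption~\ref{hyp:k-g}, this forces $\mathfrak{g} < 1$ pointwise $\mu$-a.e. (otherwise the right-hand side would be strictly positive where the left-hand side vanishes). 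I then set $\phi := \gamma\,\mathfrak{g}/(1-\mathfrak{g})$, a non-negative, non-zero measurable function for which the equilibrium equation reads $T_k(\mathfrak{g}) = \phi$.

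The heart of the argument is the direct computation, using $(\kk\etae)(x,y) = k(x,y)(1-\mathfrak{g}(y))/\gamma(y)$:
\[
T_{\kk\etae}(\phi)(x) \;=\; \int \frac{k(x,y)}{\gamma(y)}\,(1-\mathfrak{g}(y))\,\frac{\gamma(y)\,\mathfrak{g}(y)}{1-\mathfrak{g}(y)}\,\mu(\rd y) \;=\; \int k(x,y)\,\mathfrak{g}(y)\,\mu(\rd y) \;=\; T_k(\mathfrak{g})(x) \;=\; \phi(x).
\]
Hence $\phi$ is a non-negative eigenfunction of $T_{\kk\etae}$ associated with the eigenvalue $1$; iterating gives $T_{\kk\etae}^n\phi = \phi$ for all $n$, and Gelfand's spectral radius formula yields $\rho(T_{\kk\etae}) \geq 1$.

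For the reverse inequality I would invoke Krein-Rutman on the compact non-negative adjoint $T^*_{\kk\etae}$, which produces a non-negative eigenmeasure $\nu$ satisfying $T^*_{\kk\etae}\nu = \rho(T_{\kk\etae})\,\nu$; the duality identity
\[
\rho(T_{\kk\etae})\,\langle\nu,\phi\rangle \;=\; \langle T^*_{\kk\etae}\nu,\phi\rangle \;=\; \langle\nu,T_{\kk\etae}\phi\rangle \;=\; \langle\nu,\phi\rangle
\]
then forces $\rho(T_{\kk\etae}) = 1$ provided $\langle\nu,\phi\rangle > 0$. The main obstacle is precisely this non-degeneracy condition: it holds trivially under irreducibility of $k$ (where $\mathfrak{g}$ and hence $\phi$ are strictly positive), but in the general case it should be closed either by a support-decomposition argument for $T_{\kk\etae}$, or by contradicting the maximality of $\mathfrak{g}$: were $\rho(T_{\kk\etae}) > 1$, applying ``$R_0 > 1$ iff the maximal equilibrium is positive'' to the SIS system with kernel $k\etae$ would furnish a non-zero equilibrium $\mathfrak{h}$, and a direct calculation gives $F(\mathfrak{h}) = (1-\mathfrak{h})\,T_k(\mathfrak{g}\,\mathfrak{h}) \geq 0$, so $\mathfrak{h}$ is a subsolution of the original SIS whose orbit under the monotone SIS flow should eventually clash with the maximality of $\mathfrak{g}$.
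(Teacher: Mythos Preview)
Your handling of the case $R_0=1$ and your lower bound $R_e(\etae)\geq 1$ when $R_0>1$ are both correct; the eigenfunction $\phi=\Tinf_k(\mathfrak{g})=\gamma\mathfrak{g}/(1-\mathfrak{g})$ for the eigenvalue $1$ is exactly what the paper exhibits in the last paragraph of the proof of Proposition~\ref{prop:caract-g}.

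The gap is in the upper bound $R_e(\etae)\leq 1$. You correctly flag that the Krein--Rutman duality argument needs $\langle\nu,\phi\rangle>0$, which is unavailable without irreducibility. But your fallback does not close the gap either: your computation $F(\mathfrak{h})=(1-\mathfrak{h})\,\Tinf_k(\mathfrak{g}\mathfrak{h})\geq 0$ is correct, but Lemma~\ref{lem:Fh>0} then only gives $\mathfrak{h}\leq\mathfrak{g}$. This is \emph{compatible} with the maximality of $\mathfrak{g}$, not in conflict with it---the monotone flow started from $\mathfrak{h}$ simply climbs to some equilibrium dominated by $\mathfrak{g}$. To contradict maximality you need a subsolution that strictly exceeds $\mathfrak{g}$ somewhere, and $\mathfrak{h}$ does not provide one.

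The paper's proof (Proposition~\ref{prop:caract-g}, implication \ref{h=g}$\Rightarrow$\ref{Rh<1}) resolves precisely this difficulty via a support decomposition. Assuming $R_e(1-\mathfrak{g})>1$ and setting $A=\{\mathfrak{g}>0\}$, one pairs a left Perron eigenvector $v$ of $\Tinf_{(1-\mathfrak{g})k/\gamma}$ against the equilibrium equation to obtain $\langle v,\mathds{1}_A\rangle=0$, so the excess spectrum lives on $A^c$: the restricted kernel $k'=\mathds{1}_{A^c}\,k\,\mathds{1}_{A^c}$ still satisfies $\rho(\Tinf_{k'/\gamma})>1$. This yields a non-trivial $w\geq 0$ supported on $A^c=\{\mathfrak{g}=0\}$ with $\Tinf_{k'}(w)=(\gamma+\lambda)w$ for some $\lambda>0$. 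Because $w$ and $\mathfrak{g}$ have disjoint supports, one checks $F(\mathfrak{g}+\varepsilon w)\geq 0$ for small $\varepsilon>0$, whence $\mathfrak{g}+\varepsilon w\leq\mathfrak{g}$ by Lemma~\ref{lem:Fh>0}: the desired contradiction. The idea you are missing is that any spectral excess of $\Tinf_{\kk\etae}$ beyond $1$ must be localized on $\{\mathfrak{g}=0\}$, which is exactly where there is room to build something above $\mathfrak{g}$.
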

This result will be proved below as a part of Proposition~\ref{prop:caract-g}.
Let us finally describe informally another consequence of
this Proposition.
We were able to prove
in~\cite[Theorem~4.14]{delmas_infinite-dimensional_2020} that, in the connected case,
if~$R_0>1$, the disease-free equilibrium~$u=0$ is unstable.
Proposition~\ref{prop:caract-g} gives spectral information on the formal linearization of
the dynamics~\eqref{eq:SIS-intro} near any equilibrium~$h$; in particular if~$h\neq
\mathfrak{g}$ then~$h$ is linearly unstable.

\subsubsection{Regularity of the total proportion of infected population function~$\I$}

According to \cite[Section~5.3.]{delmas_infinite-dimensional_2020}, the SIS equation with
vaccination strategy~$\eta$ is given by~\eqref{eq:SIS-intro}, where~$F$ is replaced by
$F_\eta$ defined by:
\[
  F_\eta(g) = (1-g)T_{k\eta}(g) - \gamma g.
\]
and~$u_t$ now describes the proportion of infected \emph{among the non-vaccinated
population}. We denote by $\mathfrak{g}_\eta$ the corresponding maximal equilibrium (thus
considering~$\eta\equiv1$ gives $\mathfrak{g}=\mathfrak{g}_1$), so
that~$F_\eta(\mathfrak{g}_\eta)=0$. Since the probability for an individual~$x$ to be
infected in the stationary regime is~$\mathfrak{g}_\eta(x) \, \eta(x)$, the \emph{fraction
of infected individuals at equilibrium},~$\I(\eta)$, is thus given by:
\begin{equation}
  \label{eq:def-I-intro}
  \I (\eta)=\int_\Omega \mathfrak{g}_\eta
  \, \eta\, \mathrm{d}\mu=\int_\Omega \mathfrak{g}_\eta(x)
  \, \eta(x)\, \mu(\mathrm{d}x).
\end{equation}
As mentioned above, for a SIR model, distributing vaccine so as to minimize the attack
rate is at least as natural as trying to minimize the reproduction number, and this
problem has been studied for example in~\cite{TheMostEfficiDuijze2016,
DoseOptimalVaDuijze2018}. In the SIS model the quantity~$\I$ appears as a natural analogue
of the attack rate, and is therefore a natural optimization objective.

\medskip

We obtain results on~$\I$ that are very similar to the ones on~$R_e$. Recall that
Assumption~\ref{hyp:k-g} on page~\pageref{hyp:k-g} ensures that the infinite-dimensional
SIS model, given by equation \eqref{eq:SIS-intro}, is well defined. The next theorem
corresponds to Theorem~\ref{th:continuity-I}.

\begin{theorem}[Continuity of the equilibrium infection size]
 Under Assumption~\ref{hyp:k-g}, the function~$\I \, \colon \, \Delta \to \R_+$ is
 continuous with respect to the weak topology on~$\Delta$.
\end{theorem}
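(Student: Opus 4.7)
The plan is to mirror the strategy used to prove the continuity of $R_e$, replacing the spectral fixed-point relation by the nonlinear one defining the endemic equilibrium. I first recast $\I(\eta)$ in terms of the infected density $v_\eta := \eta\,\mathfrak{g}_\eta$. Using the elementary identity $T_{k\eta}(g) = T_k(\eta g)$, the equilibrium equation $(1 - \mathfrak{g}_\eta)\,T_{k\eta}(\mathfrak{g}_\eta) = \gamma\,\mathfrak{g}_\eta$ rearranges, after multiplication by $\eta$, into the fixed-point equation
\[
  v_\eta = \Phi_\eta(v_\eta), \qquad \Phi_\eta(v) := \eta\,\frac{T_k(v)}{\gamma + T_k(v)},
\]
in which $v_\eta$ is the maximal pointwise solution with $0 \leq v \leq \eta$ and $\I(\eta) = \int_\Omega v_\eta \, \mathrm{d}\mu$. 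Continuity of $\I$ thus reduces to sequential continuity of $\eta \mapsto \int v_\eta \, \mathrm{d}\mu$.

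Let $\eta_n \to \eta$ weakly in $\Delta$. By weak compactness of $\Delta$ (Lemma~\ref{lem:D-compact}), extract a subsequence along which $v_{\eta_n} \to w$ weakly for some $w \in \Delta$. Under Assumption~\ref{hyp:k-g}, the family $\{T_{k\eta} : \eta \in \Delta\}$ is collectively compact in the same operator-theoretic sense used to prove continuity of $R_e$, so $T_k(v_{\eta_n}) \to T_k(w)$ strongly, and since $\gamma$ is bounded below, the ratio $T_k(v_{\eta_n})/(\gamma + T_k(v_{\eta_n}))$ also converges strongly to $T_k(w)/(\gamma + T_k(w))$. Multiplying this strongly convergent factor by the weakly convergent $\eta_n$ gives $\Phi_{\eta_n}(v_{\eta_n}) \to \Phi_\eta(w)$ weakly, so passing to the limit in $v_{\eta_n} = \Phi_{\eta_n}(v_{\eta_n})$ yields $w = \Phi_\eta(w)$; maximality of $v_\eta$ then forces $w \leq v_\eta$.

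Upper semicontinuity of $\I$ follows from a monotone iteration from above. Since $\Phi_\eta$ is order-preserving and $\Phi_\eta(\eta) \leq \eta$, the iterates $\Phi_\eta^K(\eta)$ decrease in $K$ to a fixed point that, by maximality, must be $v_\eta$. An inductive application of the preceding paragraph shows $\Phi_{\eta_n}^K(\eta_n) \to \Phi_\eta^K(\eta)$ weakly for every fixed $K$; combined with $v_{\eta_n} \leq \Phi_{\eta_n}^K(\eta_n)$ and weak-to-integral convergence, this yields $\limsup_n \I(\eta_n) \leq \int \Phi_\eta^K(\eta)\,\mathrm{d}\mu$, and monotone convergence in $K$ gives $\limsup_n \I(\eta_n) \leq \I(\eta)$.

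The main obstacle is the complementary lower semicontinuity, which amounts to ruling out that $w$ is a strictly smaller fixed point than $v_\eta$ (for instance $w = 0$ while $v_\eta > 0$). The plan is to exploit the dynamical characterization of $v_\eta$: under Assumption~\ref{hyp:k-g}, $v_\eta$ is the large-time limit of the SIS flow with vaccination $\eta$ starting from any initial condition that dominates a small multiple of the Perron eigenfunction of $T_{\kk\eta}$, which is available whenever $R_e(\eta) > 1$. The already-established continuity of $R_e$ keeps these eigenfunctions under control as the parameter varies, and the continuous dependence of the SIS flow on the parameter over compact time intervals lifts the lower bound to $\eta_n$, yielding $\liminf_n \I(\eta_n) \geq \I(\eta)$; the case $R_e(\eta) \leq 1$ is trivial since then $\I(\eta) = 0$. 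Combining the two semicontinuity bounds gives convergence along the subsequence, and since every subsequential limit of the bounded sequence $(\I(\eta_n))_n$ must equal $\I(\eta)$, the full sequence converges.
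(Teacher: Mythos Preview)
Your reformulation via $v_\eta = \eta\,\mathfrak{g}_\eta$ and the passage to a weak subsequential limit $w$ satisfying $w = \Phi_\eta(w)$ are correct and essentially parallel the paper, which instead works with $g_n = \mathfrak{g}_{\eta_n}$ and obtains the stronger conclusion $g_n \to g$ $\mu$-a.s.\ (since $g_n$ is a pointwise-continuous function of $T_{\kk}(v_{\eta_n})$, and the latter converges pointwise by weak convergence of $v_{\eta_n}$). A small slip: Assumption~\ref{hyp:k-g} does not require $\gamma$ to be bounded away from zero, but this is inessential since the ratio $T_k(v)/(\gamma+T_k(v))$ is pointwise bounded by $1$ and dominated convergence still yields the strong convergence you need.

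The genuine gap is your lower-semicontinuity step. The plan you sketch has several loose ends: continuity of $R_e$ gives no control on the Perron eigenfunctions of $T_{\kk\eta_n}$; continuous dependence of the SIS flow on $\eta$ in the \emph{weak} topology is nowhere established and is not immediate; and without irreducibility (which is not assumed here) the flow started from a small positive initial condition need not converge to the \emph{maximal} equilibrium, so the dynamical characterization you invoke may simply fail. The paper bypasses all of this with a single spectral observation, Proposition~\ref{prop:caract-g}: an equilibrium $h$ of $F_\eta$ is the maximal one if and only if $R_e[k/\gamma]\bigl(\eta(1-h)\bigr) \leq 1$. Since each $g_n = \mathfrak{g}_{\eta_n}$ is maximal, one has $R_e[k/\gamma]\bigl(\eta_n(1-g_n)\bigr) \leq 1$; the a.s.\ convergence $g_n \to g$ together with $\eta_n \to \eta$ weakly gives $\eta_n(1-g_n) \to \eta(1-g)$ weakly, and the already-proved weak continuity of $R_e$ (Theorem~\ref{th:continuity-R}) then yields $R_e[k/\gamma]\bigl(\eta(1-g)\bigr) \leq 1$, forcing $g = \mathfrak{g}_\eta$. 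This identifies the limit in one stroke---no upper/lower semicontinuity split, no monotone iteration, no dynamics, and no irreducibility assumption.
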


In Proposition~\ref{prop:I-stab}, we prove the stability of~$\I$, by giving natural
sufficient condition on a sequence of kernels and functions $((k_n, \gamma_n), n\in \N)$
converging to~$(k, \gamma)$ which imply that~$\I[k_n, \gamma_n]$ converges uniformly
towards~$\I[k,\gamma]$. We also prove that the loss functions~$\loss=R_e$ and~$\loss=\I$
are both non-decreasing ($\eta\leq \eta'$ implies~$\loss(\eta)\leq \loss(\eta')$), and
sub-homogeneous ($ \loss(\lambda \eta)\leq \lambda \loss(\eta)$ for all $\lambda\in
[0,1]$); see Propositions~\ref{prop:R_e} and~\ref{prop:I}.

\subsubsection{Optimizing the protection of the population}

Consider a cost function~$C \, \colon \, \Delta \to [0,1]$ which measures the cost for the
society of a vaccination strategy (production and diffusion). Since the vaccination
strategy~$\eta$ represents the non-vaccinated population, the cost function~$C$ should be
decreasing (roughly speaking~$\eta<\eta'$ implies $C(\eta)>C(\eta')$; see Definition
\ref{def:cont, monot}). We shall also assume that~$C$ is continuous with respect to the
weak topology on~$\Delta$, and that doing nothing costs nothing, that is,~$C(\un)=0$. A
simple and natural choice is the uniform cost~$\costu$ given by the overall proportion of
vaccinated individuals:
\[
  \costu (\eta)=\int_\Omega (1-\eta)\, \mathrm{d}\mu=1- \int_\Omega \eta\, \mathrm{d}\mu.
\]
See Remark~\ref{rem:costa-costu} for comments on other examples of cost functions.

\medskip

Our problem may now be seen as a bi-objective minimization problem: we wish to minimize
both the loss~$\loss(\eta)$ and the cost~$C(\eta)$, subject to $\eta \in \Delta$, with the
loss function~$\loss$ being either~$R_e$ or $\I$. Following classical terminology for
multi-objective optimisation problems \cite{NonlinearMultiMietti1998}, we call a
strategy~$\eta_\star$ \emph{Pareto optimal} if no other strategy is strictly better:
\[
  C(\eta)< C(\eta_\star) \implies \loss(\eta) > \loss(\eta_\star)
  \quad\text{and}\quad
  \loss(\eta)< \loss(\eta_\star) \implies C(\eta) > C(\eta_\star).
\]
The set of Pareto optimal strategies will be denoted by~$\cp_\loss$, and we define the
\emph{Pareto frontier} as the set of Pareto optimal outcomes:
\[
  \mathcal{F}_\loss =
  \{ (C(\eta_\star),\loss(\eta_\star)) \, \colon \, \eta_\star \in
  \mathcal{P}_\loss \}.
\]
Notice that, with this definition, the Pareto frontier is empty when there is no Pareto
optimal strategy.

For any strategy~$\eta$, the cost and loss of~$\eta$ vary between the following bounds:
\begin{align*}
 & 0 = C(\un) \leq C(\eta) \leq C(0) = \maxcost = \text{cost of vaccinating the whole population},\\
 & 0 =\loss(0)\leq \loss(\eta) \leq \loss(\un) = \maxloss= \text{loss incurred in the absence of
 vaccination}.
\end{align*}
Let~$\lossinf$ be the \emph{optimal loss} function and~$\CinfL$ the \emph{optimal} cost
function defined by:
\begin{align*}
  \lossinf(c) &=
  \inf \, \set{ \loss(\eta) \, \colon \, \eta \in \Delta, \, C(\eta) \leq c } \quad
  \text{for $c\in [0,\maxcost]$}, \\
  \CinfL(\ell) &=
  \inf \, \set{ C(\eta) \, \colon \, \eta \in \Delta,\, \loss(\eta) \leq \ell } \quad
  \text{for $\ell \in [0,\maxloss]$}.
\end{align*}
We simply write $\Cinf$ for $\CinfL$ when
no confusion on the loss function can arise.
Proposition~\ref{prop:f_properties} (in a more general framework in
particular for the cost function) and
Lemma~\ref{lem:c-dec+L-hom} states that the Pareto frontier is non empty and has a
continuous parametrization for the cost $C=\costu$ and the loss $\loss=R_e$ or $\loss=\I$;
see Figure~\ref{fig:pareto_frontier} below for a visualization of the Pareto frontier.

\begin{theorem}[Properties of the Pareto frontier]
  For the kernel model with loss function $\loss=R_e$ or the SIS model
  with $\loss\in \{R_e, \I\}$, and the uniform cost function $C=\costu$,
  the
  function~$\CinfL$ is continuous and decreasing on~$[0, \maxloss]$, the
  function~$\lossinf$ is continuous on~$[0, \maxcost]$ decreasing on~$[0, \CinfL(0)]$ and
  zero on~$[\CinfL(0),\maxcost]$; furthermore the Pareto frontier is
  connected  and:
  \[ \mathcal{F}_\loss = \{(c,\lossinf(c)) \, \colon \, c \in [0,\CinfL(0)]\} =
  \{(\CinfL(\ell), \ell) \, \colon \, \ell \in [0,\maxloss]\} . \]
\end{theorem}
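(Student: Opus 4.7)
My plan is to combine weak compactness of $\Delta$ with the continuity and monotonicity/sub-homogeneity properties already established. Compactness of $\Delta$ in the weak topology (Lemma~\ref{lem:D-compact}) together with weak continuity of $\loss$ and $\costu$ immediately yields existence of minimizers: the sublevel set $\{\eta \in \Delta \,:\, \loss(\eta)\leq \ell\}$ is weakly closed and hence compact, so $\CinfL(\ell)$ is attained at some $\eta^{(\ell)} \in \Delta$, and symmetrically each $\lossinf(c)$ is attained.

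The crucial step is strict decrease of $\CinfL$ on $[0,\maxloss]$. Given $\ell_1 < \ell_2 \leq \maxloss$ and an optimizer $\eta_1$ of $\CinfL(\ell_1)$, the bound $\loss(\eta_1) \leq \ell_1 < \maxloss = \loss(\un)$ forces $\eta_1 \neq \un$, hence $\costu(\eta_1) > 0$. The perturbation $\eta_1^\epsilon := (1-\epsilon)\eta_1 + \epsilon \un$ satisfies $\costu(\eta_1^\epsilon) = (1-\epsilon)\costu(\eta_1) < \costu(\eta_1)$, while weak continuity of $\loss$ along this segment gives $\loss(\eta_1^\epsilon) \leq \ell_2$ for $\epsilon > 0$ small enough, so $\CinfL(\ell_2) \leq \costu(\eta_1^\epsilon) < \CinfL(\ell_1)$. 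For continuity of $\CinfL$, right-continuity is a standard weak-compactness extraction along $\ell_n \downarrow \ell$, while left-continuity along $\ell_n \uparrow \ell > 0$ uses sub-homogeneity applied to an optimizer $\eta_\star$ of $\CinfL(\ell)$: the rescaling $(\ell_n/\ell)\eta_\star$ is feasible for $\ell_n$ since $\loss((\ell_n/\ell)\eta_\star) \leq (\ell_n/\ell)\ell = \ell_n$, and weak continuity of $\costu$ yields $\CinfL(\ell_n) \leq \costu((\ell_n/\ell)\eta_\star) \to \costu(\eta_\star) = \CinfL(\ell)$.

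With $\CinfL$ continuous and strictly decreasing from $\CinfL(0)$ to $\CinfL(\maxloss) = \costu(\un) = 0$, it is a homeomorphism from $[0,\maxloss]$ onto $[0,\CinfL(0)]$; I would then identify $\lossinf|_{[0,\CinfL(0)]}$ as its inverse. Given $c = \CinfL(\ell)$, first observe that every optimizer of $\CinfL(\ell)$ must satisfy $\loss(\eta^{(\ell)}) = \ell$ (otherwise $\eta^{(\ell)}$ would be feasible for a strictly smaller loss threshold, contradicting strict decrease of $\CinfL$), so $\lossinf(c) \leq \ell$; the reverse inequality $\lossinf(c) < \ell$ would yield a strategy with $\CinfL(\lossinf(c)) \leq c = \CinfL(\ell)$, again contradicting strict decrease of $\CinfL$. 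Thus $\lossinf$ is continuous, strictly decreasing on $[0,\CinfL(0)]$, and identically zero on $[\CinfL(0),\maxcost]$ (the latter directly from the definition).

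Finally, unpacking the Pareto definition shows that $\eta_\star \in \cp_\loss$ is equivalent to $\costu(\eta_\star) = \CinfL(\loss(\eta_\star))$ (strict decrease of $\CinfL$ then forces the symmetric identity $\loss(\eta_\star) = \lossinf(\costu(\eta_\star))$). Since every $\ell \in [0,\maxloss]$ arises as $\loss(\eta^{(\ell)})$ for the optimizer $\eta^{(\ell)}$ of $\CinfL(\ell)$, the map $\ell \mapsto (\CinfL(\ell),\ell)$ is a continuous surjective parametrization of $\mathcal{F}_\loss$ by the interval $[0,\maxloss]$, from which both claimed equalities and connectedness follow. The main technical subtlety I anticipate is the interplay of weak compactness with sub-homogeneity in the left-continuity argument: weak compactness alone does not suffice to approximate an optimizer by feasible points at lower loss thresholds, and one must exploit the scaling structure of $\loss$ to produce an admissible rescaling.
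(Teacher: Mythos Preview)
Your proposal is correct and uses essentially the same ingredients as the paper: the perturbation $\eta+\varepsilon(\un-\eta)$ to exploit that $\costu$ is decreasing, and the scaling $\lambda\eta$ to exploit sub-homogeneity of $\loss$. The paper packages these as abstract ``no nontrivial local minimum'' conditions (Assumptions~\ref{hyp:cost} and~\ref{hyp:loss}) and then proves a general Proposition~\ref{prop:f_properties}, whereas you work directly with $\costu$ and the sub-homogeneous loss; the only substantive organizational difference is that the paper obtains continuity of $\CinfL$ and $\lossinf$ as a byproduct of the inverse-bijection identification (a strictly decreasing surjection onto an interval is continuous), while you prove continuity of $\CinfL$ first via compactness and the rescaling $(\ell_n/\ell)\eta_\star$ and then identify $\lossinf$ as its inverse.
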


We also establish that $\cp_\loss$ is compact in~$\Delta$ for the weak topology in
Corollary \ref{cor:P=K}; that the set of outcomes or feasible region $\FF=\{ (C(\eta),\loss(\eta)), \,
\eta\in \Delta\}$ has no holes in Proposition~\ref{prop:trou}; and that the Pareto
frontier is convex if~$C$ and~$\loss$ are convex in Proposition~\ref{prop:cvex}. We study
in Proposition~\ref{prop:F-stab} the stability of the Pareto frontier and the set of
Pareto optima when the parameters vary. \medskip

In a sense the Pareto optimal strategies are intuitively the ``best'' strategies.
Similarly, we also study the ``worst'' strategies, which we call anti-Pareto optimal
strategies, and describe the corresponding anti-Pareto frontier. Understanding the ``worst
strategies'' also helps to avoid pitfalls when one has to consider
sub-optimal strategies: for example, we prove  in \cite{ddz-Re}  that disconnecting strategies are not
the ``worst'' strategies, and we provide in \cite[Section~4]{ddz-reg}  an elementary example
where the same strategies can be ``best'' or ``worst'' according to model parameters
values. Surprisingly, proving properties of the anti-Pareto frontier
sometimes necessitates stronger assumptions than in the Pareto case:
for example, the connectedness of the anti-Pareto frontier is only
proved under a \emph{quasi-irreducibility} assumption on the kernel, 
see Lemmas \ref{lem:L**} and~\ref{lem:L**=I}.

\begin{remark}[Eradication strategies do not depend on the loss]\label{rem:CR1=CI0}%
  In \cite{delmas_infinite-dimensional_2020}, we proved that, for all~$\eta\in\Delta$, the
  equilibrium infection size~$\I(\eta)$ is non zero if and only
  if~$R_e(\eta)>1$. Consider the uniform cost   $C=\costu$. First,
  this implies that~$\cp_\I$ is a subset of~$\{ \eta\in \Delta\, \colon\, R_e(\eta)\geq
  1\}$. Secondly, a vaccination strategy~$\eta \in \Delta$ is Pareto optimal for the
  objectives~$(R_e, C)$ and satisfies~$R_e(\eta) = 1$ if and only if~$\eta$ is Pareto
  optimal for the objectives~$(\I, C)$ and satisfies~$\I(\eta) =0$:
  \begin{equation}
    \label{eq:PJ-PRe} \eta\in \cp_{R_e}\text{ and } R_e(\eta)=1 \quad \Longleftrightarrow
    \quad \eta\in \cp_{\I}\text{ and } \I(\eta)=0.
  \end{equation}
\end{remark}

\begin{remark}[Minimal cost of eradication]
  Assume~$R_0 > 1$ and the uniform cost $C=\costu$. The equivalence~\eqref{eq:PJ-PRe} implies directly that:
  \[
    \CinfR(\un)= \CinfI(0).
  \]
  Thus, this latter quantity can be seen as the minimal cost (or minimum percentage of
  people that have to be vaccinated) required to eradicate the infection. Recall the
  critical vaccination strategies~$\etauc \equiv 1/R_0$ and~$\etae=1- \mathfrak{g}$
  (as~$R_e(\etauc)=R_e(\etae)=1$).  Since~$C(\etauc)=1- 1/R_0$ and~ $C(\etae)= \int_\Omega \mathfrak{g}
  \, \mathrm{d}\mu=\I (\un)$, we obtain the following upper bounds of the minimal cost
  required to eradicate the infection:
  \[
    \CinfR(\un)=\CinfI(0)\leq \min \left(1-\frac{1}{R_0}\, , \,
    \int_\Omega \mathfrak{g} \, \mathrm{d}\mu \right).
  \]
\end{remark}

\subsubsection{Equivalence of models}
  Our last  results
address a  natural question stemming from  our choice of a  very general
framework  to modelize  the infection.   Since our  models are  infinite
dimensional  and  depend  on  the   choices  of  the  probability  space
$(\Omega,\cf, \mu)$,  the kernel  $\kk$ (for the  kernel model)  and the
kernel~$k$  and recovery  rate $\gamma$  (for  the SIS  model), the  are
different  equivalent ways  to model  the  same situation.  We study  in
Section~\ref{sec:equivalent}  a   way  to  ensure  that,   even  if  the
parameters are different, we end up with the same Pareto frontiers. This
situation  is  similar  to  random  variables having  the  same  law  in
probability  theory, or  to equivalent  graphons in  graphon theory.  In
particular  it allows  us to  treat  the same  meta-population model  in
either      a      discrete      or      a      continuous      setting,
see~Figure~\ref{fig:discrete-and-continuous}  for  an  illustration  and
Example~\ref{ex:multipartite}.

\subsubsection{An illustrative example: the multipartite graphon}

Let us illustrate some of our results on an example, which will be discussed in details in
a forthcoming companion paper \cite{ddz-reg}.

\begin{example}[Multipartite graphon]\label{ex:multipartite}
  Graphs that can be colored with~$\ell$ colors, so that no two endpoints of an edge have
  the same color are known as~$\ell$-partite graphs. In a biological setting, this
  corresponds to a population of~$\ell$ groups, such that individuals in a group can not
  contaminate individuals of the same group. Let us generalize and assume there is an
  infinity of groups,~$\ell=\infty$ of respective size~$(2^{-n}, \, n\in \N^*)$ and that
  the next generation kernel~$\kk$ is equal to the constant~$\kappa>0$ between individuals
  of different groups and equal to~$0$ between individuals of the same group (so there is
  no intra-group contamination).
  Using the equivalence of models from Section~\ref{sec:equivalent},
  we can represent this model by using a continuous state
  space~$\Omega=[0, 1]$, endowed with~$\mu$ the Lebesgue measure on~$\Omega$, the
  group~$n$ being represented by the interval~$I_n=[1-2^{-n+1},1- 2^{-n})$ for~$n\in
  \N^*$. The kernel~$\kk$ is then given by~$\kk=\kappa (1 - \sum_{n\in \N^*}
  \mathds{1}_{I_n \times I_n})$; it is represented in Figure \ref{fig:kernel}.
  \medskip
  
  Consider the  loss~$\loss =  R_e$ and  the cost~$C=\costu$  giving the
  overall proportion of vaccinated individuals.  Based on the results of
  \cite{esser_spectrum_1980,stevanovi}, we prove  in \cite{ddz-reg} that
  the     vaccination    strategies~$\mathds{1}_{[0,     1-c]}$,    with
  cost~$C(\mathds{1}_{[0,     1-c]})=c\in    [0,1/2]$,     are    Pareto
  optimal. Remembering  that the natural  definition of the degree  in a
  continuous                graph                is                given
  by~$\mathrm{deg}(x)=\int_\Omega  \kk(x,  y)\, \mu(\mathrm{d}  y)$,  we
  note that the vaccination strategy~$\mathds{1}_{[0, 1-c]}$ corresponds
  to vaccinating individuals with feature~$x\in  (1-c, 1]$, that is, the
  individuals       with       the        highest       degree.       In
  Figure~\ref{fig:pareto_frontier},  the  corresponding Pareto  frontier
  (\textit{i.e.}, the outcome of the ``best'' vaccination strategies) is
  drawn as the solid red line;  the blue-colored zone corresponds to the
  feasible    region    that    is,     all    the    possible    values
  of~$(C(\eta),  R_e(\eta))$,  where~$\eta$  ranges  over~$\Delta$;  the
  dotted  line corresponds  to the  outcome of  the uniform  vaccination
  strategy        $\eta         \equiv        c$,         that        is
  $(C(\eta), R_e(\eta))=(c, (1-c) R_0)$  where $c$ ranges over~$[0, 1]$;
  and  the red  dashed  curve corresponds  to  the anti-Pareto  frontier
  (\textit{i.e.}, the outcome of  the ``worst'' vaccination strategies),
  which  for this  model correspond  to the  uniform vaccination  of the
  nodes with the  updated lower degree; see  \cite{ddz-reg}. Notice that
  the path~$(\mathds{1}_{[0,1-c]},  \, c\in  [0,1/2])$ is  an increasing
  continuous  (for  the topology  of  the  simple convergence  and  thus
  the~$L^1(\mu)$ topology) path of Pareto  optima which gives a complete
  parametrization of the  Pareto frontier. The latter  has been computed
  numerically  using  the  power  iteration method.  In  particular,  we
  obtained the following value: $R_0 \simeq 0.697 \kappa$.
\end{example}

\begin{figure}
 \begin{subfigure}[T]{.5\textwidth} \centering
 \includegraphics[page=1]{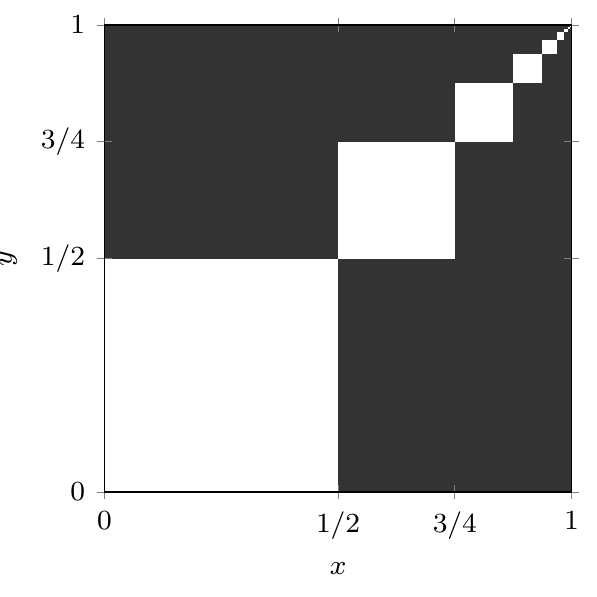} \caption{Grayplot
 of the kernel~$\kk$, with~$\Omega = [0,1]$ and~$\mu$ the Lebesgue measure
 ($\kk$ is equal to the constant $\kappa>0$
 on the black zone and to $0$ on the white zone).} \label{fig:kernel}
 \end{subfigure}%
 \begin{subfigure}[T]{.5\textwidth} \centering
 \includegraphics[page=2]{multipartite} \caption{The Pareto frontier in
 solid red line compared to the cost and loss of the uniform vaccinations in dotted line
 and the worst vaccination strategy in red dashed line.}
 \label{fig:pareto_frontier}
 \end{subfigure}%
 \caption{Example of optimization with~$\loss = R_e$.} \label{fig:optim_example}
\end{figure}

\subsection{On the companion papers}\label{sec:suite}

We detail some developments in forthcoming papers where only the uniform cost $C=\costu$
is considered. In \cite{ddz-Re}, motivated by the conjecture formulated by Hill and
Longini in finite dimension \cite[Conjecture~8.1]{hill-longini-2003}, we investigate the
convexity and concavity of the effective reproduction function $R_e$. We also prove that a
disconnecting strategy is better than the worst, \textit{i.e.}, is not anti-Pareto optimal.

In \cite{ddz-mono}, under monotonicity properties of the kernel, satisfied for example by
the configuration model, it is proven that vaccinating the individuals with the highest
(resp.\ lowest) number of contacts is Pareto (resp.\ anti-Pareto) optimal. In this case
the greedy algorithm, which performs infinitesimal locally optimal steps, is optimal as it
browses continuously the set of Pareto (resp.\ anti-Pareto) optimal strategies, providing
an increasing parametrization of the Pareto (resp.\ anti-Pareto) frontier. In this
setting, we provide some examples of SIS models where the set of Pareto optimal strategies
coincide for the losses $R_e$ and $\I$:
\begin{equation}\label{eq:PI=PR}
  \cp_\I= \cp_{R_e} \cap \{ \eta \in \Delta\, \colon\, R_e(\eta) \geq 1\}.
\end{equation}

In \cite{ddz-reg}, which includes a detailed study of the multipartite kernel of
Example~\ref{ex:multipartite}, we study the optimal vaccination when the individuals have
the same number of contacts. This provides examples where the uniform vaccination is Pareto
optimal, or anti-Pareto optimal, or not optimal for either problem. We also provide an example where
the set $\cp_{R_e}$ has a countable number of connected components (and is thus not connected). This
implies in particular that the greedy algorithm is not optimal in this case.

In \cite{ddz-2pop}, we give a comprehensive treatment of the two groups
model,~$\Omega=\{1, 2\}$, for~$\loss = R_e$, and some partial results for~$\loss=\I$.
Despite its apparent simplicity, the derivation of formulae for the Pareto optimal
strategies is non trivial, see also \cite{poghotanyan_constrained_2018}. In addition, this
model is rich enough to give examples of various interesting behaviours:
\begin{itemize}
  \item \emph{On the critical strategies~$\etauc$ and~$\etae$.} Depending on the
    parameters, the strategies~$\etauc$ and/or~$\etae$ may or may not be Pareto optimal,
    and the cost~$C(\etauc)$ may be larger than, smaller than or equal to~$C(\etae)$.
  \item \emph{Vaccinating people with highest contacts.} The intuitive idea of vaccinating
    the individuals with the highest number of contacts may or may not provide the optimal
    strategies, depending on the parameters.
  \item \emph{Dependence on the choice of the loss function.} For examples where~$R_0>1$,
    the optimal strategies for the losses~$\I$ and~$R_e$ may coincide, so that
    \eqref{eq:PI=PR} holds, or not at all, so that~$\cp_\I \cap \cp_{R_e}\cap \{ \eta\in
    \Delta\, \colon\, 1< R_e(\eta)<R_0\}=\emptyset$, depending on the parameters.
\end{itemize}

\subsection{Structure of the paper}

Section~\ref{sec:settings} is dedicated to the presentation of the vaccination model and
the various assumptions on the parameters. We also define properly the so-called loss
functions~$R_e$ and~$\I$. After recalling a few topological facts in
Section~\ref{sec:preliminaries}, we study the regularity properties of~$R_e$ and~$\I$
in~Section~\ref{sec:properties-loss}. We present the multi-objective optimization problem
in Section~\ref{sec:P-AntiP} under general condition on the loss function $\loss$ and cost
function~$C$ and prove the results on the Pareto frontier. This is completed in
Section~\ref{sec:diversCL} with miscellaneous properties of the Pareto frontier. In
Section~\ref{sec:equivalent}, we discuss the equivalent representation of models with
different parameters. Proofs of a few technical results are gathered in
Section~\ref{sec:technical_proofs}. 

\section{Setting and notation}\label{sec:settings}

\subsection{Spaces, operators, spectra}\label{sec:spaces}

All metric spaces~$(S,d)$ are endowed with their Borel~$\sigma$-field denoted by $\cb(S)$.
The set $\ck$ of compact subsets of~$\C$ endowed with the Hausdorff distance
$d_\mathrm{H}$ is a metric space, and the function~$\mathrm{rad}$ from~$\ck$ to $\R_+$
defined by~$\mathrm{rad}(K)=\max\{|\lambda|\, ,\, \lambda\in K\}$ is Lipschitz continuous
from~$(\ck,d_\mathrm{H})$ to~$\R$ endowed with its usual Euclidean distance.

Let~$(\Omega, \cf, \mu)$ be a probability space. We denote by~$\mathscr{L}^\infty$, the
Banach spaces of bounded real-valued measurable functions defined on~$\Omega$ equipped
with the~$\sup$-norm, $\mathscr{L}^\infty_+$ the subset of~$\mathscr{L}^\infty$ of
non-negative function, and $\Delta=\{f\in \mathscr{L}^\infty\,\colon\, 0\leq f\leq 1\}$
the subset of non-negative functions bounded by~$1$. For~$f$ and~$g$ real-valued functions
defined on~$\Omega$, we may write~$\langle f, g \rangle$ or $\int_\Omega f g \, \mathrm{d}
\mu$ for $\int_\Omega f(x) g(x) \,\mu( \mathrm{d} x)$ whenever the latter is meaningful.
For~$p \in [1, +\infty]$, we denote by $L^p=L^p( \mu)=L^p(\Omega, \mu)$ the space of
real-valued measurable functions~$g$ defined~$\Omega$ such that $\norm{g}_p=\left(\int
|g|^p \, \mathrm{d} \mu\right)^{1/p}$ (with the convention that~$\norm{g}_\infty$ is
the~$\mu$-essential supremum of $|g|$) is finite, where functions which agree~$\mu$-almost
surely are identified. We denote by~$L^p_+$ the subset of~$L^p$ of non-negative functions.

\medskip

Let~$(E, \norm{\cdot})$ be a Banach space. We denote by~$\norm{\cdot}_E$ the operator norm
on~$\cll(E)$ the Banach algebra of bounded operators. The spectrum~$\spec(T)$ of~$T\in
\cll(E)$ is the set of~$\lambda\in \C$ such that~$ T - \lambda \mathrm{Id}$ does not have
a bounded inverse operator, where~$\mathrm{Id}$ is the identity operator on~$E$. Recall
that~$\spec(T)$ is a compact subset of~$\C$, and that the spectral radius of~$T$ is given
by:
\begin{equation}\label{eq:def-rho}
  \rho(T)=\mathrm{rad}(\spec(T))=
  \lim_{n\rightarrow \infty } \norm{T^n}_E^{1/n}.
\end{equation}
The element $\lambda\in \spec(T)$ is an eigenvalue if there exists $x\in E$ such that
$Tx=\lambda x$ and $x\neq 0$.

If~$E$ is also a functional space, for~$g \in E$, we denote by~$M_g$ the multiplication
(possibly unbounded) operator defined by~$M_g(h)=gh$ for all~$h \in E$.

\subsection{Kernel operators}

We define a \emph{kernel} (resp.\ \emph{signed kernel}) on~$\Omega$ as a $\R_+$-valued
(resp.\ $\R$-valued) measurable function defined on~$(\Omega^2, \mathscr{F}^{\otimes 2})$.
For~$f,g$ two non-negative measurable functions defined on~$\Omega$ and~$\kk$ a kernel
on~$\Omega$, we denote by $f\kk g$ the kernel defined by:
\begin{equation}
  \label{eq:def-fkg}
  f\kk g:(x,y)\mapsto f(x)\, \kk(x,y) g(y).
\end{equation}
When~$\gamma$ is a positive measurable function defined on~$\Omega$, we write~$\kk/\gamma$
for~$\kk\gamma^{-1}$, and remark that it may differ from~$\gamma^{-1} \kk$.

For~$p \in (1, +\infty )$, we define the double norm of a signed kernel~$\kk$ by:
\begin{equation}\label{eq:Lp-integ-cond}
  \norm{\kk}_{p,q}=\left(\int_\Omega\left( \int_\Omega \abs{\kk(x,y)}^q\,
  \mu(\mathrm{d}y)\right)^{p/q} \mu(\mathrm{d}x) \right)^{1/p}
  \quad\text{with~$q$ given by}\quad \frac{1}{p}+\frac{1}{q}=1.
\end{equation}

\begin{hyp}[On the kernel model \gxx]\label{hyp:k}
  Let~$(\Omega, \cf, \mu)$ be a probability space. The kernel~$\kk$ on~$\Omega$ has a
  \emph{finite double-norm}, that is,~$\norm{\kk}_{p,q}<+\infty$ for some~$p\in (1,
  +\infty )$.
\end{hyp}

To a kernel $k$ such that $\norm{\kk}_{p,q}<+\infty$, we associate the positive integral
operator~$T_\kk$ on~$L^p$ defined by:
\begin{equation}\label{eq:def-Tkk}
  T_\kk (g) (x) = \int_\Omega \kk(x,y) g(y)\,\mu(\mathrm{d}y)
  \quad \text{for } g\in L^p \text{ and } x\in \Omega.
\end{equation}
According to~\cite[p. 293]{grobler}, operator $T_\kk$ is compact. It is well known and easy to
check that:
\begin{equation}\label{eq:double-norm-norm}
  \norm{ T_\kk }_{L^p}\leq \norm{\kk}_{p,q}.
\end{equation}
For~$\eta\in \Delta$, the kernel~$\kk \eta$ has also a finite double norm on~$L^p$ and the
operator~$M_\eta$ is bounded, so that the operator $T_{\kk \eta} = T_\kk M_\eta$ is
compact. We can define the \emph{effective spectrum} function~$\spec[\kk]$ from~$\Delta$
to~$\ck$ by:
\begin{equation}\label{eq:def-sigma_e}
  \spec[\kk](\eta)=\spec(T_{\kk\eta}),
\end{equation}
the \emph{effective reproduction number} function $R_e[\kk]=\mathrm{rad}\circ \spec[\kk]$
from~$\Delta$ to~$\R_+$ by:
\begin{equation}
  \label{eq:def-R_e}
  R_e[\kk](\eta)=\mathrm{rad}( \spec(T_{\kk \eta}))=\rho(T_{\kk\eta}),
\end{equation}
and the corresponding \emph{reproduction number}:
\begin{equation}\label{eq:def-R0}
  R_0[\kk]=R_e[\kk](\un)=\rho(T_\kk).
\end{equation}
When there is no ambiguity, we simply write $R_e$ for $R_e[\kk]$ and
$R_0$ for $R_0[\kk]$.
We say a vaccination strategy $\eta\in \Delta$ is \emph{critical} if
$R_e(\eta)=1$. 

\medskip

Following the framework of~\cite{delmas_infinite-dimensional_2020}, for $q\in (1, +\infty
)$, we also consider the following norm for the kernel $\kk$:
\[
  \norm{\kk}_{\infty,q} = \sup\limits_{x \in \Omega} \left(\int_\Omega \kk(x,y)^q\,
  \mu(\mathrm{d}y) \right)^{1/q}.
\]
Clearly, we have that $\norm{\kk}_{\infty , q}$ finite implies that $\norm{\kk}_{p,q}$ is
also finite, with $p$ such that $1/p+1/q=1$. When $\norm{\kk}_{\infty , q}<+\infty $, the
corresponding positive bounded linear integral operator~$\Tinf_\kk$
on~$\mathscr{L}^\infty$ is similarly defined by:
\begin{equation}
  \label{eq:def-Tk}
  \Tinf_{\kk} (g) (x) = \int_\Omega \kk(x,y) g(y)\,
  \mu(\mathrm{d}y)
  \quad \text{for } g\in \mathscr{L}^\infty \text{ and } x\in \Omega.
\end{equation}
Notice that the integral operators $\Tinf_\kk$ and $T_\kk$ corresponds respectively to the
operators $T_\kk$ and $\hat T_\kk$ in \cite{delmas_infinite-dimensional_2020}. According
to \cite[Lemma~3.7]{delmas_infinite-dimensional_2020}, the operator~$\Tinf_\kk^2$ on~$\cl$
is compact and~$\Tinf_\kk$ has the same spectral radius as~$ T_\kk$:
\begin{equation}\label{eq:rhoT=rhoT}
  \rho(\Tinf_\kk)=\rho(T_\kk).
\end{equation}

\subsection{Dynamics for the SIS model and equilibria}\label{sec:dyn}

In accordance with \cite{delmas_infinite-dimensional_2020}, we consider the following
assumption. Recall that $k/\gamma=k \gamma^{-1}$.

\begin{hyp}[On the SIS model \gxxx]\label{hyp:k-g}
  Let~$(\Omega, \cf, \mu)$ be a probability space. The recovery rate function~$\gamma$ is
  a function which belongs to~$\mathscr{L}^\infty_+$ and the transmission rate kernel~$k$
  on~$\Omega^2$ is such that $\norm{k/\gamma}_{\infty , q}<+\infty $ for some $q\in (1,
  +\infty )$.
\end{hyp}

Assumption~\ref{hyp:k-g} implies Assumption~\ref{hyp:k} for the kernel~$\kk=k/\gamma$. Under
Assumption~\ref{hyp:k-g}, we also consider the bounded operators~$\Tinf_{k / \gamma}$ on
$\cl$, as well as $T_{k / \gamma}$ on $L^p$, which are the so called \emph{next-generation
operator}. The SIS dynamics considered in~\cite{delmas_infinite-dimensional_2020} (under
Assumption \ref{hyp:k-g}) follows the vector field~$F$ defined on~$\mathscr{L}^\infty$ by:
\begin{equation}
  \label{eq:vec-field}
  F(g) = (1 - g) \Tinf_k (g) - \gamma g.
\end{equation}
More precisely, we consider~$u=(u_t, t\in \R)$, where~$u_t\in \Delta$ for all~$t\in\R_+$
such that:
\begin{equation}\label{eq:SIS2}
  \partial_t u_t = F(u_t)\quad\text{for } t\in \R_+,
\end{equation}
with initial condition~$u_0\in \Delta$. The value~$u_t(x)$ models the probability that an
individual of feature~$x$ is infected at time~$t$; it is proved
in~\cite{delmas_infinite-dimensional_2020} that such a solution~$u$ exists and is unique.

\medskip

An \emph{equilibrium} of~\eqref{eq:SIS2} is a function~$g \in \Delta$ such that~$F(g) =
0$. According to \cite{delmas_infinite-dimensional_2020}, there exists a maximal
equilibrium~$\mathfrak{g}$, \textit{i.e.}, an equilibrium such that all other
equilibria~$h\in \Delta$ are dominated by~$\mathfrak{g}$: $h \leq \mathfrak{g}$. The
\emph{reproduction number}~$R_0$ associated to the SIS model given by~\eqref{eq:SIS2} is
the spectral radius of the next-generation operator, so that using the definition of the
effective reproduction number~\eqref{eq:def-R_e},~\eqref{eq:def-R0} and
\eqref{eq:rhoT=rhoT}, this amounts to:
\begin{equation}\label{eq:def-R0-2}
  R_0= \rho (\Tinf_{k/\gamma})=R_0[k/\gamma]= R_e[k/\gamma](\un).
\end{equation}
If~$R_0\leq 1$ (sub-critical and critical case), then~$u_t$ converges pointwise to~$0$
when~$t\to\infty$. In particular, the maximal equilibrium~$\mathfrak{g}$ is equal to~$0$
everywhere. If~$R_0>1$ (super-critical case), then~$0$ is still an equilibrium but
different from the maximal equilibrium $\mathfrak{g}$, as~$\int_\Omega \mathfrak{g} \,
\mathrm{d}\mu > 0$.

\subsection{Vaccination strategies}\label{sec:vacc}

A \emph{vaccination strategy}~$\eta$ of a vaccine with perfect efficiency is an element
of~$\Delta$, where~$\eta(x)$ represents the proportion of \emph{\textbf{non-vaccinated}}
individuals with feature~$x$. Notice that~$\eta\, \mathrm{d} \mu$ corresponds in a sense
to the effective population.

Recall the definition of the kernel~$f\kk g$ from~\eqref{eq:def-fkg}. For~$\eta \in
\Delta$, the kernels~$k\eta/\gamma$ and~$k\eta$ have finite norm $\norm{\cdot}_{\infty ,
q}$ under Assumption \ref{hyp:k-g}, so we can consider the bounded positive
operators~$\Tinf_{k \eta / \gamma}$ and~$\Tinf_{k\eta}$ on~$\mathscr{L}^\infty$. According
to \cite[Section~5.3.]{delmas_infinite-dimensional_2020}, the SIS equation with
vaccination strategy~$\eta$ is given by~\eqref{eq:SIS2}, where~$F$ is replaced by~$F_\eta$
defined by:
\begin{equation}
  \label{eq:vec-field-vaccin}
  F_\eta(g) = (1-g) \Tinf_{k\eta}(g) - \gamma g.
\end{equation}
We denote by~$u^\eta=(u^\eta_t, t\geq 0)$ the corresponding solution with initial
condition~$u_0^\eta \in \Delta$. We recall that~$u_t^\eta(x)$ represents the probability
for an non-vaccinated individual of feature~$x$ to be infected at time $t$. Since the
effective reproduction number is the spectral radius of~$\Tinf_{k\eta/\gamma}$, we
recover~\eqref{eq:def-R_e} as~$
\rho(\Tinf_{k\eta/\gamma})=\rho(T_{k\eta/\gamma} )=
R_e[k/\gamma](\eta)$ with $\kk=k/\gamma$.
We denote by~$\mathfrak{g}_\eta$ the corresponding maximal equilibrium (so that
$\mathfrak{g}=\mathfrak{g}_1$). In particular, we have:
\begin{equation}
  \label{eq:F(g)=0}
  F_\eta(\mathfrak{g}_\eta)=0.
\end{equation}
We will denote by~$\I $ the \emph{fraction of infected individuals at equilibrium}. Since
the probability for an individual with feature~$x$ to be infected in the stationary regime
is~$\mathfrak{g}_\eta(x) \, \eta(x)$, this fraction is given by the following formula:
\begin{equation}\label{eq:asymptotic_number_endemic}
  \I (\eta)=\int_\Omega \mathfrak{g}_\eta
  \, \eta\, \mathrm{d}\mu=\int_\Omega \mathfrak{g}_\eta(x)
  \, \eta(x)\, \mu(\mathrm{d}x).
\end{equation}
We deduce from~\eqref{eq:vec-field-vaccin} and~\eqref{eq:F(g)=0}
that~$\mathfrak{g}_\eta\eta=0$ $\mu$-almost surely is equivalent to~$\mathfrak{g}_\eta=0$.
Applying the results of~\cite{delmas_infinite-dimensional_2020} to the kernel~$k \eta$, we
deduce that:
\begin{equation}
  \label{eq:gh>0}
  \I (\eta)>0 \,\Longleftrightarrow\, R_e[k/\gamma](\eta)>1.
\end{equation}

\medskip

We conclude this section with a result on the maximal equilibrium~$\mathfrak{g}$ which is
a direct consequence of Proposition~\ref{prop:caract-g} proved in
Section~\ref{sec:proof-I}. This result completes what is known
from~\cite{delmas_infinite-dimensional_2020}. Notice that, if~$R_0>1$, then Property
\ref{cor:g>0} implies that the strategy~$1-\mathfrak{g}$ is critical.

\begin{proposition}[On the maximal equilibrium]\label{cor:stab_g_star}
  Suppose Assumption \ref{hyp:k-g} holds and write $R_e$ for $R_e[k/\gamma]$.
  \begin{propenum}
  \item\label{cor:h=g} For any~$h\in\Delta$,~$h=\mathfrak{g}$ if and only if~$F(h) = 0$
    and~$R_e(1-h)\leq 1$.
  \item\label{cor:g>0} If~$\mathfrak{g}\neq 0$, then~$R_e (1-\mathfrak{g}) = 1$.
  \end{propenum}
\end{proposition}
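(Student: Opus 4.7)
The plan is to derive both statements from the spectral characterization in Proposition~\ref{prop:caract-g} by combining two main ingredients: an explicit eigenvector identity (yielding one half of each inequality) and a sub-solution argument for the cooperative semi-flow (yielding the other half).

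The first ingredient is an eigenvector identity valid at any equilibrium $h\in\Delta$ of $F$. Writing $F(h)=0$ as $(1-h)\Tinf_k(h)=\gamma h$, i.e., $\Tinf_k(h)=\gamma h/(1-h)$ where $h<1$, a direct substitution gives
\[
  T_{k(1-h)/\gamma}\bigl(\gamma h/(1-h)\bigr)(x)
  = \int_\Omega k(x,y)\,h(y)\,\mu(\mathrm{d}y)
  = \Tinf_k(h)(x)
  = \gamma(x) h(x)/(1-h(x)).
\]
Thus $\psi_h := \gamma h/(1-h)$ is a non-negative eigenvector of the compact positive operator $T_{k(1-h)/\gamma}$ with eigenvalue $1$. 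Whenever $h\neq 0$, $\psi_h$ is non-trivial, yielding $R_e(1-h) = \rho(T_{k(1-h)/\gamma}) \geq 1$; specialised to $h=\mathfrak{g}$ under the hypothesis of~(ii), this produces the lower bound $R_e(1-\mathfrak{g}) \geq 1$.

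The second ingredient is a sub-solution argument that proves the direction ``$\Leftarrow$'' of~(i). Given an equilibrium $h\neq\mathfrak{g}$ of $F$, maximality forces $h\leq\mathfrak{g}$ with strict inequality on a set of positive $\mu$-measure, so the function $w := (\mathfrak{g}-h)/(1-h) \in\Delta\setminus\{0\}$. An algebraic expansion of $F_{1-h}(w) = (1-w)\Tinf_{k(1-h)}(w) - \gamma w$, using $\Tinf_{k(1-h)}(w) = \Tinf_k(\mathfrak{g}-h) = \gamma\mathfrak{g}/(1-\mathfrak{g}) - \gamma h/(1-h)$ from both equilibrium equations, telescopes to
\[
  F_{1-h}(w) = \gamma\, h(\mathfrak{g}-h)/(1-h)^2 \geq 0.
\]
So $w$ is a non-trivial sub-solution for the cooperative flow $F_{1-h}$. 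By the monotonicity of the SIS semi-flow recalled in \cite{delmas_infinite-dimensional_2020}, the solution started from $w$ is non-decreasing and converges to an equilibrium of $F_{1-h}$ dominated by $\mathfrak{g}_{1-h}$ and dominating $w$. Consequently $\mathfrak{g}_{1-h}\neq 0$, so $\I(1-h)>0$, and~\eqref{eq:gh>0} yields $R_e(1-h)>1$. The contrapositive establishes~(i)~$\Leftarrow$.

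It remains to establish the upper bound $R_e(1-\mathfrak{g})\leq 1$, which gives both~(i)~$\Rightarrow$ and~(ii). When $\mathfrak{g}=0$ this is trivial since $R_e(1) = R_0 \leq 1$; the case $\mathfrak{g}\neq 0$ is the main obstacle. The approach is to argue by contradiction: assuming $\rho := R_e(1-\mathfrak{g}) > 1$, Krein-Rutman provides a non-negative eigenvector $\phi$ of $T_{k(1-\mathfrak{g})/\gamma}$ for $\rho$; then, setting $\tilde\phi := (1-\mathfrak{g})\phi/\gamma$, the perturbation $\mathfrak{g}+\lambda\tilde\phi$ for small $\lambda>0$ together with a monotone iteration of the flow should produce an equilibrium of $F$ strictly exceeding $\mathfrak{g}$ on a set of positive measure, contradicting maximality. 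The delicate point is controlling the sign of the first-order term $\gamma\tilde\phi(\rho - 1/(1-\mathfrak{g}))$, which may fail to be non-negative where $\mathfrak{g}$ is close to $1$; this is the technical heart of Proposition~\ref{prop:caract-g} and requires the spectral analysis of the formal linearization $DF(\mathfrak{g}) = M_{1-\mathfrak{g}}\Tinf_k - M_{\gamma/(1-\mathfrak{g})}$.
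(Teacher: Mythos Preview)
Your two main ingredients are correct, and the sub-solution argument for part~(i)~``$\Leftarrow$'' is in fact a pleasant alternative to the paper's route. However, the proposal is incomplete: the upper bound $R_e(1-\mathfrak{g})\le 1$ (needed for~(i)~``$\Rightarrow$'' and the upper half of~(ii)) is never established. You correctly identify that the naive right-eigenvector perturbation $\mathfrak{g}+\lambda\tilde\phi$ fails because the first-order term $\gamma\tilde\phi\bigl(\rho-1/(1-\mathfrak{g})\bigr)$ changes sign where $\mathfrak{g}$ is close to $1$, and you then defer to Proposition~\ref{prop:caract-g}. But the statement you are proving is itself declared in the paper to be a direct corollary of that proposition, so this deferral is circular unless you are content simply to quote it---in which case the independent ingredients are redundant.

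The paper's resolution of this step (the implication \ref{h=g}$\Rightarrow$\ref{Rh<1} in Proposition~\ref{prop:caract-g}) avoids the sign obstruction by a different mechanism. Rather than perturbing with a right eigenvector, it takes a \emph{left} Perron eigenvector $v\ge 0$ of $\Tinf_{(1-h)k/\gamma}$ at the assumed eigenvalue $\rho=R_e(1-h)>1$ and pairs the equilibrium relation against $v$ to obtain $\rho\langle v,\gamma h\rangle=\langle v,\gamma h\rangle$, forcing $v$ to vanish on $\{h>0\}$. This localizes the super-unit spectral radius to the kernel restricted to $\{h=0\}$, and the perturbation $w$ is then constructed \emph{there}, where $h=0$ kills the problematic $1/(1-h)$ factor entirely. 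With $wh\equiv 0$ one computes $F(h+\varepsilon w)\ge 0$ for small $\varepsilon$, hence $h+\varepsilon w\le\mathfrak{g}$ by Lemma~\ref{lem:Fh>0}, so $h\ne\mathfrak{g}$. The missing idea in your sketch is precisely this use of the left eigenvector to discover where the perturbation must be supported.

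On the positive side, your second ingredient---showing directly that $w=(\mathfrak{g}-h)/(1-h)$ satisfies $F_{1-h}(w)=\gamma h(\mathfrak{g}-h)/(1-h)^2\ge 0$ and hence $\mathfrak{g}_{1-h}\ne 0$, whence $R_e(1-h)>1$ via~\eqref{eq:gh>0}---is a cleaner route to~(i)~``$\Leftarrow$'' than the paper's, which instead passes through the squared weight $(1-h)^2$ and the auxiliary spectral Lemma~\ref{lem:prelim-result}.
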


\section{Preliminary topological results}
\label{sec:preliminaries}
\subsection{On the weak topology}\label{sec:weak}

We first recall briefly some properties we shall use frequently. We can see~$\Delta$ as a
subset of~$L^1 $, and consider the corresponding \emph{weak topology}:
a sequence~$(g_n, \, n \in \N)$ of elements of~$\Delta$ converges weakly to~$g$ if for
all~$h \in L^\infty $ we have:
\begin{equation}
  \label{eq:weak-cv}
  \lim\limits_{n \to \infty} \int_\Omega h g_n \, \mathrm{d}\mu= \int_\Omega h
  g\, \mathrm{d}\mu.
\end{equation}
Notice that \eqref{eq:weak-cv} can easily be extended to any function $h\in L^q$ for any
$q\in (1, +\infty )$; so that the weak-topology on $\Delta$, seen as a subset of $L^p$
with $1/p+1/q=1$, can be seen as the trace on $\Delta$ of the weak topology on $L^p$. The
main advantage of this topology is the following compactness result.

\begin{lemma}[Topological properties of~$\Delta$]\label{lem:D-compact}
 We have that:
 \begin{propenum}
 \item\label{item:D-compact} The set~$\Delta$ endowed with the weak topology is compact
 and sequentially compact.
 \item\label{item:f-cont} A function from $\Delta$ (endowed with the weak topology) to a
 metric space (endowed with its metric topology) is continuous if and only if it is
 sequentially continuous.
 \end{propenum}
\end{lemma}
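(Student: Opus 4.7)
The plan is to realize $\Delta$ as a subset of $L^1(\mu)$: since $(\Omega,\cf,\mu)$ is a probability space, every $f \in \Delta$ satisfies $\lvert f\rvert \leq 1$ pointwise and hence belongs to $L^1(\mu)$, and the topology described in the excerpt is precisely the trace on $\Delta$ of the weak topology of $L^1(\mu)$, whose topological dual is $L^\infty(\mu)$.

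For part \textit{(i)}, first I would note that $\Delta$ is bounded in $L^1$ norm and, being pointwise dominated by the integrable constant $1$, is uniformly integrable. By the Dunford--Pettis theorem, $\Delta$ is therefore weakly relatively compact in $L^1$. Next I would check that $\Delta$ is closed in the $L^1$ norm topology: it is the intersection of the norm-closed sets $\{f \geq 0\}$ and $\{f \leq 1\}$ (almost-sure inequalities pass to $L^1$ limits by extracting an almost-sure convergent subsequence), and it is clearly convex, hence weakly closed by Mazur's lemma. Combining the two facts yields weak compactness. Weak sequential compactness then follows from the Eberlein--\v{S}mulian theorem, which asserts that weakly compact subsets of a Banach space are weakly sequentially compact.

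For part \textit{(ii)}, the implication continuous $\Rightarrow$ sequentially continuous is general. For the converse, I would invoke the angelicity of weakly compact subsets of Banach spaces (a direct consequence of the Eberlein--\v{S}mulian theorem): for every subset $A \subset \Delta$, each point in the weak closure of $A$ is the weak limit of some sequence in $A$. Given a sequentially continuous map $f \colon \Delta \to (M,d)$ and a closed subset $C \subset M$, take $g$ in the weak closure of $f^{-1}(C)$; by angelicity there is a sequence $(g_n)$ in $f^{-1}(C)$ converging weakly to $g$, and sequential continuity together with closedness of $C$ gives $f(g) \in C$. Hence $f^{-1}(C)$ is weakly closed and $f$ is continuous.

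The main obstacle is that the probability space $(\Omega,\cf,\mu)$ carries no assumed separability property, so the weak topology on $\Delta$ need not be metrizable and one cannot argue by picking a compatible metric. The delicate point is precisely to bridge the sequential statement with the topological one; this is exactly the content of angelicity provided by Eberlein--\v{S}mulian, which is the key functional-analytic tool I rely on.
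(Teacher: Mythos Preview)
Your argument is correct, but it takes a different route from the paper in both parts. For \ref{item:D-compact}, the paper embeds $\Delta$ in $L^p$ for some $p\in(1,\infty)$, exploits reflexivity together with Banach--Alaoglu to get that the closed unit ball is weakly compact, and then observes that $\Delta$ is convex and norm-closed, hence weakly closed; you instead stay in $L^1$ and invoke Dunford--Pettis via uniform integrability. Your path works, though it trades the one-line reflexivity argument for a slightly heavier criterion. For \ref{item:f-cont}, the paper argues that the preimage of a closed set under a sequentially continuous map is sequentially closed, hence sequentially compact (as a sequentially closed subset of the sequentially compact $\Delta$), hence weakly compact by Eberlein--\v{S}mulian, hence closed; you go through angelicity of weakly compact sets, which is the sharper form of Eberlein--\v{S}mulian identifying weak closures with sequential closures. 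Both approaches ultimately rest on Eberlein--\v{S}mulian to bridge the sequential and topological worlds; your angelicity argument is more direct once that tool is granted, while the paper's compactness detour uses only the basic equivalence of weak compactness and weak sequential compactness.
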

\begin{proof}
 Let $p \in (1, +\infty )$, and consider the weak topology on $\Delta$ as the trace on
 $\Delta$ of the weak topology on $L^p$. We first prove \ref{item:D-compact}. Since~$L^p$
 is reflexive, by the Banach-Alaoglu theorem \cite[Theorem V.4.2]{Con90}, its unit ball
 is weakly compact. The set~$\Delta$ is closed and convex, therefore it is weakly closed;
 see~\cite[Corollary V.1.5]{Con90}. Thus, $\Delta$ is weakly compact as a weakly closed
 subset of the weakly compact unit ball. By the Eberlein–Šmulian theorem \cite[Theorem
 V.13.1]{Con90},~$\Delta$ is also weakly sequentially compact.

 \medskip

 We now prove \ref{item:f-cont}. A continuous function is sequentially continuous.
 Conversely, the inverse image of a closed set by a sequentially continuous function is
 sequentially closed. Besides, a sequentially closed subset of a sequentially compact set
 is sequentially compact. Using the Eberlein–Šmulian theorem, we deduce that
 the inverse images of closed sets are compact. In particular, they are closed which
 proves a sequentially continuous function is continuous.
\end{proof}

\subsection{Invariance and continuity of the spectrum for compact operators}

We recall a few facts on operators. Let~$(E, \norm{\cdot})$
be a Banach space. Let~$A\in \cll(E)$. We denote by $A^\top$ the
adjoint of $A$. A sequence~$(A_n,n \in \N)$ of elements of~$\cll(E)$
converges strongly to~$A \in \cll(E)$ if~$\lim_{n\rightarrow \infty } \norm{A_nx
-Ax}=0$ for all~$x\in E$. Following~\cite{anselone}, a set of operators~$\ca\subset
\cll(E)$ is \emph{collectively compact} if the set~$\{ A x \, \colon \, A \in \ca, \,
\norm{x}\leq 1 \}$ is relatively compact.

We collect some known results on the spectrum of to compact operators.
Recall that the spectrum of a compact operator is finite or countable
and has at most one accumulation point, which is $0$. Furthermore, $0$
belongs to the spectrum of compact operators in infinite dimension.
\begin{lemma}
 \label{lem:prop-spec-mult} Let $A, B$ be elements of $\cll(E)$.
\begin{propenum}
\item\label{item:A-B}
 If~$A$,~$B$ and~$A-B$ are positive
operators, then we have:
\begin{equation}\label{eq:r(A)r(B)}
\rho(A)\geq \rho(B).
\end{equation}
\item\label{item:spec-adjoint-mult}
 If $A$ is compact, then we have:
\begin{align}
 \label{eq:adjoint-mult}
 \spec(A)=\spec(A^\top)
 \\
 \label{eq:r(AB)-mult}
 \spec(AB)=\spec(BA)
\end{align}
and in particular:
\begin{equation}\label{eq:r(AB)}
 \rho(AB)=\rho(BA).
\end{equation}

\item \label{item:density-mult}
 Let~$(E', \norm{\cdot}')$ be a Banach space such that~$E'$ is continuously and densely
embedded in~$E$. Assume that $A(E')\subset E'$, and denote by~$A'$ the
restriction of $A$ to~$E'$ seen as an operator on~$E'$. If $A$ and $A'$
are compact, then we have:
\begin{equation}\label{eq:sT=sT'}
 \spec(A)=\spec(A').
\end{equation}

\item \label{item:collectK-cv}
 Let~$(A_n, n\in \N)$ be a collectively compact sequence which
 converges strongly to~$A$. Then, we have $\lim_{n\rightarrow \infty }
 \spec(A_n)=\spec(A)$ in~$(\ck, d_\mathrm{H})$, and $\lim_{n\rightarrow }
 \rho(T_n)=\rho(T)$.
 \end{propenum}
\end{lemma}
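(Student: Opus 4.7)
Parts (i) and (ii) rest on classical facts. For (i), I would apply Gelfand's formula $\rho(T) = \lim_n \norm{T^n}_E^{1/n}$ together with the inductive identity $A^{n+1} - B^{n+1} = A(A^n - B^n) + (A - B)B^n$, which expresses $A^n - B^n$ as a sum of positive operators; this yields $0 \leq B^n \leq A^n$, so $\norm{B^n}_E \leq \norm{A^n}_E$ since the operator norm on an $L^p$-lattice is attained on the positive cone, and taking $n$-th roots gives \eqref{eq:r(A)r(B)}. For (ii), the identity $\spec(A) = \spec(A^\top)$ is standard Banach-space duality: $(A - \lambda \mathrm{Id})^\top = A^\top - \lambda \mathrm{Id}$, and a bounded operator is invertible if and only if its adjoint is. The equality $\spec(AB) \setminus \{0\} = \spec(BA) \setminus \{0\}$ is a classical Banach-algebra fact (verified by checking that $\lambda^{-1}(\mathrm{Id} + A(\lambda \mathrm{Id} - BA)^{-1} B)$ inverts $\lambda \mathrm{Id} - AB$ when $\lambda \neq 0$); compactness of $A$ makes both $AB$ and $BA$ compact, so in the infinite-dimensional setting their spectra both contain $0$, the equality extends to all of $\C$, and \eqref{eq:r(AB)} follows at once.

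For part (iii), both $\spec(A)$ and $\spec(A')$ are at-most-countable subsets of $\C$ whose only possible accumulation point is $0$, and whose non-zero points are eigenvalues. The inclusion $\spec(A') \setminus \{0\} \subset \spec(A)$ is immediate since an eigenvector in $E'$ also lies in $E$. For the converse, I would argue by contradiction: suppose $\lambda \neq 0$ lies in $\spec(A)$ but not in $\spec(A')$. A direct computation shows that on the common resolvent set the two resolvents agree on $E'$: for $y \in E'$, the element $w = (z\mathrm{Id} - A')^{-1} y \in E'$ satisfies $(z\mathrm{Id} - A)w = (z\mathrm{Id} - A')w = y$, whence $(z\mathrm{Id} - A)^{-1} y = w = (z\mathrm{Id} - A')^{-1} y$. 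Taking a small circle $C$ around $\lambda$ that separates $\lambda$ from the rest of $\spec(A) \cup \spec(A')$, the spectral projection $P_\lambda = (2\pi i)^{-1} \oint_C (z\mathrm{Id} - A)^{-1}\, \mathrm{d}z$ then satisfies $P_\lambda y = (2\pi i)^{-1} \oint_C (z\mathrm{Id} - A')^{-1} y\, \mathrm{d}z = 0$ for every $y \in E'$, by holomorphy of the $A'$-resolvent on the disk bounded by $C$. Density of $E'$ in $E$ and boundedness of $P_\lambda$ on $E$ force $P_\lambda = 0$, contradicting that $\lambda$ is an isolated eigenvalue of $A$.

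Part (iv) is the deepest ingredient, and I would invoke Anselone's theory of collectively compact operators \cite{anselone}. The plan has two components. First, every non-zero $\lambda \in \spec(A)$ is approximated by eigenvalues of the $A_n$: comparing the spectral projections $P_\lambda$ and $P_\lambda^{(n)} = (2\pi i)^{-1} \oint_C (z\mathrm{Id} - A_n)^{-1}\, \mathrm{d}z$ for a small contour $C$ around $\lambda$, and using that collective compactness combined with strong convergence yields convergence of the $A_n$-resolvents to the $A$-resolvent on the compact set $C$, one obtains $P_\lambda^{(n)} \to P_\lambda$; equality of ranks for large $n$ then forces $\spec(A_n)$ to meet any neighborhood of $\lambda$. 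Second, any accumulation point $\lambda$ of a sequence $\lambda_n \in \spec(A_n)$ with $\liminf |\lambda_n| > 0$ lies in $\spec(A)$: extracting unit eigenvectors $A_n x_n = \lambda_n x_n$, collective compactness provides a convergent subsequence $A_n x_n \to \lambda x$ with $\norm{x} = 1$, and strong convergence of $A_n$ passes to the limit in the eigenvalue relation. Since $0$ lies in both $\spec(A)$ and each $\spec(A_n)$ in the infinite-dimensional case, closeness near $0$ is automatic, while these two facts control the Hausdorff distance at positive distance from $0$. The convergence $\rho(A_n) \to \rho(A)$ then follows from Lipschitz continuity of $\mathrm{rad}$ on $(\ck, d_\mathrm{H})$ recalled in Section~\ref{sec:spaces}. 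The principal obstacle lies in this part (iv): translating the pointwise approximation of individual eigenvalues and the absence of spurious limits into a uniform Hausdorff estimate on the whole spectrum requires the full collective-compactness machinery, with particular care for eigenvalues accumulating near $0$.
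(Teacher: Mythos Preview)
Your proposal is essentially correct and, where the paper simply cites references (Marek for~(i), K\"onig for~(ii), Halberg for~(iii), Anselone for~(iv)), you sketch direct arguments that amount to reconstructing those results. This is a legitimate, more self-contained route; two small points deserve mention.

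In part~(i), your argument relies on the implication ``$0\leq S\leq T$ positive operators $\Rightarrow \norm{S}_E\leq \norm{T}_E$'', which you justify by saying the operator norm is attained on the positive cone of an $L^p$-lattice. That is true in any Banach lattice (since $|Sx|\leq S|x|$ for positive $S$), and suffices for all applications in the paper; but the lemma is stated for an abstract Banach space $E$, so strictly speaking you are proving a slightly narrower statement than the cited result of Marek, which works in more general ordered Banach spaces. In part~(ii), you treat only the infinite-dimensional case when arguing that $0$ lies in both $\spec(AB)$ and $\spec(BA)$; the paper closes the finite-dimensional case separately via $\det(AB)=\det(A)\det(B)=\det(BA)$. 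Your argument for~(iii) via agreement of resolvents on $E'$ and vanishing of the spectral projection is exactly the mechanism behind the Halberg reference, and your outline for~(iv) correctly identifies the two ingredients (no spurious eigenvalues, no lost eigenvalues) that Anselone's Theorems~4.8 and~4.16 package into Hausdorff convergence.
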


\begin{proof}
 Property \ref{item:A-B} can be found in \cite[Theorem~4.2]{marek}.
 Equation \eqref{eq:adjoint-mult} from Property
 \ref{item:spec-adjoint-mult} can be deduced from the
 \cite[Theorem page 20]{konig}. Using the \cite[Proposition page 25]{konig}, we get
 that
 $ \spec(AB) \cap \C^*=\spec(BA) \cap \C^*$, and thus
 \eqref{eq:r(AB)}. As~$A$ is compact we get that~$AB$ and~$BA$ are
 compact, thus~$0$ belongs to their spectrum in infinite dimension.
 Whereas in finite dimension,
 as~$\mathrm{det}(AB)=\mathrm{det}(A)\mathrm{det}(B) =\mathrm{det}(BA)$
 (where~$A$ and~$B$ denote also the matrix of the corresponding
 operator in a given base), we get that~$0$ belongs to the spectrum
 of~$AB$ if and only if it belongs to the spectrum of~$BA$. This gives
 \eqref{eq:r(AB)-mult}.
\medskip

Property \ref{item:density-mult} follows from \cite[Corollary~1
and Section~6]{halber}.
We eventually check Property \ref{item:collectK-cv}. We deduce from
\cite[Theorems~4.8 and 4.16]{anselone} (see also (d) and (e)
in~\cite[Section~3]{SpectralProperAnselo1974})
that~$\lim_{n\rightarrow \infty } \spec(T_n)=\spec(T)$. Then use that
the function~$\mathrm{rad}$ is continuous to deduce the convergence of
the spectral radius from the convergence of the spectra (see also (f) in
~\cite[Section~3]{SpectralProperAnselo1974}).
\end{proof}




\section{First properties of the functions \texorpdfstring{$R_e$}{Re} and
\texorpdfstring{$\I$}{I}}\label{sec:properties-loss}

\subsection{The effective reproduction number~\texorpdfstring{$R_e$}{Re}}

We consider the kernel model $[(\Omega, \cf\!, \mu), \kk]$ under Assumption~\ref{hyp:k},
so that~$\kk$ is a kernel on~$\Omega$ with finite double norm. Recall the effective
reproduction number function~$R_e[\kk]$ defined on $\Delta$
by~\eqref{eq:def-R_e}:~$R_e[\kk](\eta)=\rho(T_\kk M_\eta)$ and the reproduction
number~$R_0[\kk]=\rho(T_\kk)$. We
simply write $R_e$ and~$R_0$ for~$R_e[\kk]$ and~$R_0[\kk]$ respectively when no confusion
on the kernel can arise.

\begin{proposition}[Basic properties of $R_e$]\label{prop:R_e}
  Suppose Assumption \ref{hyp:k} holds. Let $\eta, \eta_1, \eta_2 \in \Delta$. The
  function~$R_e=R_e[\kk]$ satisfies the following properties:
  \begin{propenum}
  \item\label{prop:a.s.-Re}%
    $R_e(\eta_1)=R_e(\eta_2)$ if~$\eta_1=\eta_2$ $\mu$-almost surely.
  \item\label{prop:min_Re}%
    $R_e(0) = 0$ and~$R_e(\un) = R_0$.
  \item\label{prop:increase}%
    $R_e(\eta_1) \leq R_e(\eta_2)$ if $\eta_1 \leq \eta_2$ $\mu$-almost surely.
  \item\label{prop:normal}%
    $R_e(\lambda \eta) = \lambda R_e(\eta)$ for all~$\lambda \in [0,1]$.
  \end{propenum}
\end{proposition}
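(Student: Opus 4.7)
The proposition collects four properties that all follow quite directly from the definition $R_e(\eta) = \rho(T_\kk M_\eta)$ together with basic facts on the spectral radius of operators on $L^p$; this is more a matter of careful bookkeeping than of substantive mathematics, and I would handle the four items one after the other.

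For \textbf{(i)}, I would note that the multiplication operator $M_\eta$ on $L^p$ depends only on the $\mu$-almost sure equivalence class of $\eta$ (since elements of $L^p$ are themselves equivalence classes). Therefore $T_\kk M_{\eta_1} = T_\kk M_{\eta_2}$ as bounded operators on $L^p$, and their spectral radii coincide.

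For \textbf{(ii)}, the identity $R_e(0)=0$ follows from $M_0 = 0$, hence $T_\kk M_0 = 0$ and $\rho(0)=0$; and $R_e(\un) = \rho(T_\kk M_\un) = \rho(T_\kk) = R_0$, which is the definition \eqref{eq:def-R0}. For \textbf{(iv)}, the observation $T_\kk M_{\lambda \eta} = \lambda T_\kk M_\eta$ together with the elementary homogeneity $\spec(\lambda A) = \lambda \spec(A)$ (valid for any bounded operator $A$ and any scalar $\lambda$) gives $\rho(\lambda T_\kk M_\eta) = \lambda \rho(T_\kk M_\eta)$ since $\lambda \in [0,1] \subset \R_+$.

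The only slightly substantive step is \textbf{(iii)}: for $\eta_1 \leq \eta_2$ $\mu$-a.s.\ I would write
\[
T_{\kk \eta_2} - T_{\kk \eta_1} = T_{\kk(\eta_2 - \eta_1)},
\]
and observe that since $\kk \geq 0$ and $\eta_2 - \eta_1 \geq 0$ $\mu$-a.s., the kernel $\kk(\eta_2 - \eta_1)$ is non-negative, so the corresponding integral operator is a positive operator on $L^p$. The operators $T_{\kk \eta_1}$ and $T_{\kk \eta_2}$ are themselves positive (they are integral operators with non-negative kernel), and all three operators are compact under Assumption~\ref{hyp:k}. Applying Lemma~\ref{lem:prop-spec-mult}\ref{item:A-B} (the monotonicity inequality \eqref{eq:r(A)r(B)} for positive operators) to $A = T_{\kk \eta_2}$ and $B = T_{\kk \eta_1}$ yields $R_e(\eta_2) = \rho(T_{\kk\eta_2}) \geq \rho(T_{\kk\eta_1}) = R_e(\eta_1)$. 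The only mild subtlety here is that one must invoke positivity of the difference $A - B$ (not merely $A \geq B$ pointwise as functions), which is precisely what the kernel computation above provides; I do not expect any serious obstacle.
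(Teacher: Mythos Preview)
Your proof is correct and follows essentially the same route as the paper's: both handle (i), (ii), (iv) by unwinding the definition, and (iii) by observing that $T_{\kk\eta_2}-T_{\kk\eta_1}$ is a positive operator and invoking the monotonicity inequality~\eqref{eq:r(A)r(B)}. The only minor difference is that you mention compactness in (iii), which is true but not needed, since Lemma~\ref{lem:prop-spec-mult}\ref{item:A-B} requires only positivity.
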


\begin{proof}
  If~$\eta_1=\eta_2$~$\mu$-almost surely, then we have that~$T_{\kk\eta_1} =
  T_{\kk\eta_2}$, and thus~$R_e(\eta_1)=R_e(\eta_2)$. This gives Point~\ref{prop:a.s.-Re}.
  Point~\ref{prop:min_Re} is a direct consequence of the definition of~$R_e$. Since for
  any fixed~$\lambda\in\mathbb{C}$ and any operator~$A$, the spectrum of~$\lambda A$ is
  equal to~$\{ \lambda s, \, s\in \spec(A)\}$, Point~\ref{prop:normal} is clear. Finally,
  note that if~$\eta_1 \leq \eta_2$ $\mu$-almost everywhere, then the operator~$T_{\kk
  \eta_2} - T_{\kk \eta_1 }$ is positive. According to~\eqref{eq:r(A)r(B)}, we get that $\rho(T_{\kk \eta_1 })
  \leq \rho(T_{k \eta_2 })$. This concludes the proof of Point~\ref{prop:increase}.
\end{proof}

We generalize a continuity property on the spectral radius originally stated
in~\cite{delmas_infinite-dimensional_2020} by weakening the topology.

\begin{theorem}[Continuity of~$\grR$ and~$\grS$]\label{th:continuity-R}
  Suppose Assumption \ref{hyp:k} holds. Then, the functions~$\spec[\kk]$ and~$R_e[\kk]$
  are continuous functions from $\Delta$ (endowed with the weak-topology) respectively
  to~$\ck$ (endowed with the Hausdorff distance) and to~$\R_+$ (endowed with the usual
  Euclidean distance).
\end{theorem}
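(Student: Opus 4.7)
The plan is to derive both continuity statements simultaneously from Lemma~\ref{lem:prop-spec-mult}\ref{item:collectK-cv}, by showing that the family $\{T_{\kk\eta}\,:\,\eta\in\Delta\}$ is collectively compact on $L^p$ and that the map $\eta\mapsto T_{\kk\eta}$ is sequentially continuous from $\Delta$ (endowed with the weak topology) to $\cll(L^p)$ equipped with the strong operator topology. Because $\Delta$ with the weak topology is sequentially compact (Lemma~\ref{lem:D-compact}\ref{item:D-compact}), Lemma~\ref{lem:D-compact}\ref{item:f-cont} reduces the task to sequential continuity; and since the Hausdorff limit of $\spec(T_{\kk\eta_n})$ controls $\mathrm{rad}(\spec(T_{\kk\eta}))=R_e(\eta)$ via the Lipschitz map $\mathrm{rad}$, handling the spectrum will automatically handle $R_e$.

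First I would establish the two structural facts. Uniform boundedness is immediate from~\eqref{eq:double-norm-norm}: for every $\eta\in\Delta$, $\norm{T_{\kk\eta}}_{L^p}\leq \norm{\kk\eta}_{p,q}\leq \norm{\kk}_{p,q}$. For collective compactness, observe that $T_{\kk\eta}(g)=T_\kk(\eta g)$ for all $g\in L^p$, so
\[
\{T_{\kk\eta}(g)\,:\,\eta\in\Delta,\ \norm{g}_p\leq 1\}\subset T_\kk\bigl(\{f\in L^p\,:\,\norm{f}_p\leq 1\}\bigr),
\]
and the right-hand side is relatively compact because $T_\kk$ is a compact operator on $L^p$ (by the finite double-norm assumption, see~\cite[p.~293]{grobler}). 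This gives collective compactness of $\{T_{\kk\eta}\,:\,\eta\in\Delta\}$.

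Next I would prove the strong convergence. Let $\eta_n\to\eta$ weakly in $\Delta$. Thanks to the uniform bound and the Banach--Steinhaus theorem, it suffices to check $T_{\kk\eta_n}(g)\to T_{\kk\eta}(g)$ in $L^p$ for $g$ in a dense subset of $L^p$; since $\mu$ is a probability measure, $\cl$ is dense in $L^p$, so I restrict to $g\in \cl$. For such a $g$ and any $h\in L^q$, one has $hg\in L^q$, hence $\int hg\,\eta_n\,\mathrm{d}\mu\to \int hg\,\eta\,\mathrm{d}\mu$, which means $\eta_n g\to \eta g$ weakly in $L^p$. Because $T_\kk$ is compact, it maps weakly convergent sequences to norm convergent ones, so
\[
T_{\kk\eta_n}(g)=T_\kk(\eta_n g)\xrightarrow[n\to\infty]{\norm{\cdot}_p} T_\kk(\eta g)=T_{\kk\eta}(g).
\]
With strong convergence and collective compactness in hand, Lemma~\ref{lem:prop-spec-mult}\ref{item:collectK-cv} yields $\spec(T_{\kk\eta_n})\to \spec(T_{\kk\eta})$ in $(\ck,d_\mathrm{H})$ and $\rho(T_{\kk\eta_n})\to\rho(T_{\kk\eta})$, which is exactly the sequential continuity of $\spec[\kk]$ and $R_e[\kk]$.

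The main obstacle is the strong convergence step: pointwise weak convergence of $\eta_n$ alone is weaker than the topology that would make $\eta\mapsto T_{\kk\eta}$ continuous in operator norm (in which case the spectral radius would trivially be continuous). It is precisely the combination of three ingredients---density of $\cl$ in $L^p$, preservation of weak convergence under multiplication by a bounded function, and the smoothing effect of the compact operator $T_\kk$ (weak $\Rightarrow$ strong)---that yields strong operator convergence; replacing strong convergence by norm convergence fails and replacing collective compactness by mere boundedness also fails, so one genuinely needs Anselone's theorem via Lemma~\ref{lem:prop-spec-mult}\ref{item:collectK-cv}.
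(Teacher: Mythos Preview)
Your proof is correct and follows essentially the same approach as the paper: establish collective compactness of $\{T_{\kk\eta}:\eta\in\Delta\}$ via $T_{\kk\eta}(B)\subset T_\kk(B)$, prove strong convergence of $T_{\kk\eta_n}$ to $T_{\kk\eta}$, then apply Lemma~\ref{lem:prop-spec-mult}\ref{item:collectK-cv} and upgrade sequential continuity to continuity via Lemma~\ref{lem:D-compact}\ref{item:f-cont}. The only minor difference is in the verification of strong convergence: the paper argues directly for all $g\in L^p$ by pointwise convergence of $T_{\kk\eta_n}(g)(x)$ plus dominated convergence with dominating function $K(x)\norm{g}_p$, $K(x)=(\int_\Omega\kk(x,y)^q\,\mu(\mathrm{d}y))^{1/q}$, whereas you reduce to bounded $g$ by density and exploit the weak-to-strong mapping property of the compact operator $T_\kk$.
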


Let us remark the proof holds even if~$\kk$ takes negative values.

\begin{proof}
  Let~$B$ denote the unit ball in $L^p$, with $p\in (1, +\infty )$ from Assumption
  \ref{hyp:k}. Since the operator~$T_\kk$ is compact, the set~$T_{\kk}(B)$ is relatively
  compact. For all~$\eta\in \Delta$, set~$\eta B=\{\eta g\, \colon\, g\in B\}$. As~$\eta
  B\subset B$, we deduce that~$T_{\kk \eta}(B)= T_\kk (\eta B) \subset T_\kk (B)$. This
  implies that the family~$(T_{\kk \eta}, \, \eta \in \Delta)$ is collectively compact.

  Let~$(\eta_n , \, n \in \N)$ be a sequence in~$\Delta$ converging weakly to some~$\eta
  \in \Delta$. Let~$g \in L^p$. The weak convergence of~$\eta_n$ to~$\eta$ implies
  that~$(T_{\kk \eta_n}(g), \, n \in \N)$ converges~$\mu$-almost surely to~$T_{\kk \eta}
  (g)$. Consider the function:
  \[
    K(x)=\left(\int _\Omega \kk(x,y)^q \, \mu(\mathrm{d} y) \right)^{1/q},
  \]
  which belongs to~$L^p$, thanks to~\eqref{eq:Lp-integ-cond}. Since for
  all~$x$,
  \[
    \abs{T_{\kk \eta_n}(g)(x)}
    \leq T_{\kk}( \abs {\eta_n g})(x)
    \leq K(x) \, \| \eta_n g\|_p \leq K(x)\, \norm{g}_p ,
  \]
  we deduce, by dominated convergence, that the convergence holds also in~$L^p$:
  \begin{equation}\label{eq:cv-Tkn}
    \lim_{n\rightarrow \infty } \norm{ T_{\kk \eta_n}(g) - T_{\kk \eta}(g)}_p=0,
  \end{equation}
  so that $T_{\kk\eta_n}$ converges strongly to $T_{\kk\eta}$. Using
  Lemma~\ref{lem:prop-spec-mult}~\ref{item:collectK-cv} (with~$T_n=T_{\kk \eta_n}$
  and~$T=T_{\kk \eta}$) on the continuity of the spectrum, we get
  that~$\lim_{n\rightarrow\infty } \spec[\kk](\eta_n)=\spec[\kk](\eta)$. The
  function~$\spec[\kk]$ is thus sequentially continuous, and, thanks to Lemma
  \ref{lem:D-compact}, it is continuous from~$\Delta$ endowed with the weak topology to
  the metric space~$\ck$ endowed with the Hausdorff distance. The continuity
  of~$R_e[\kk]$ then follows from its definition~\eqref{eq:def-rho} as the composition of
  the continuous functions~$\mathrm{rad}$ and~$\spec[\kk]$.
\end{proof}

We give a stability property of the spectrum and
spectral radius with respect to the kernel~$\kk$.

\begin{proposition}[Stability of~$\grR$ and~$\grS$]\label{prop:Re-stab}
  Let~$p\in (1, +\infty )$. Let~$(\kk_n, n\in \N)$ and~$\kk$ be kernels on~$\Omega$ with
  finite double norms on~$L^p$. If~$\lim_{n\rightarrow\infty} \norm{\kk_n - \kk}_{p,q}=0$,
  then we have:
  \begin{equation}
    \label{eq:Re-stab}
    \lim_{n\rightarrow\infty }\, \sup_{\eta\in \Delta} \Big|R_e[\kk_n](\eta) -
    R_e[\kk](\eta)\Big|=0
    \quad\text{and}\quad
    \lim_{n\rightarrow\infty }\, \sup_{\eta\in \Delta} d_\mathrm{H}\Big (
    \spec[\kk_n](\eta), \spec [\kk](\eta)\Big)=0.
  \end{equation}
\end{proposition}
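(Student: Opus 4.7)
The plan is to exploit the uniform-in-$\eta$ operator norm bound
\[
\sup_{\eta \in \Delta} \|T_{\kk_n \eta} - T_{\kk \eta}\|_{L^p} \leq \sup_{\eta \in \Delta} \|(\kk_n - \kk)\eta\|_{p,q} \leq \|\kk_n - \kk\|_{p,q},
\]
which follows from \eqref{eq:double-norm-norm} together with $0 \leq \eta \leq 1$, and to reduce the full uniform spectral statement to an application of Lemma~\ref{lem:prop-spec-mult}~\ref{item:collectK-cv} through a subsequence argument that exploits the weak compactness of $\Delta$. Since $\mathrm{rad}$ is Lipschitz on $(\ck, d_\mathrm{H})$, the convergence of the spectral radii in \eqref{eq:Re-stab} follows at once from the Hausdorff convergence of the spectra, so the work is entirely in the second statement.

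Arguing by contradiction, suppose there exist $\varepsilon > 0$, a subsequence (still indexed by $n$) and strategies $\eta_n \in \Delta$ such that $d_\mathrm{H}(\spec[\kk_n](\eta_n), \spec[\kk](\eta_n)) \geq \varepsilon$. By Lemma~\ref{lem:D-compact}~\ref{item:D-compact}, I may extract a further subsequence along which $\eta_n$ converges weakly to some $\eta^* \in \Delta$. For any $g \in L^p$, writing
\[
T_{\kk_n \eta_n} g - T_{\kk \eta^*} g = (T_{\kk_n \eta_n} - T_{\kk \eta_n}) g + (T_{\kk \eta_n} - T_{\kk \eta^*}) g,
\]
the first term tends to $0$ in $L^p$ by the uniform operator norm bound above, and the second tends to $0$ by the dominated-convergence argument in the proof of Theorem~\ref{th:continuity-R}; hence $T_{\kk_n \eta_n} \to T_{\kk \eta^*}$ strongly.

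To apply Lemma~\ref{lem:prop-spec-mult}~\ref{item:collectK-cv}, the family $(T_{\kk_n \eta_n})_n$ must be collectively compact. Let $B$ denote the unit ball of $L^p$; as in the proof of Theorem~\ref{th:continuity-R}, $T_{\kk_n \eta_n}(B) \subset T_{\kk_n}(B)$ because $\eta_n g \in B$ whenever $g \in B$. The norm convergence $T_{\kk_n} \to T_\kk$ combined with the compactness of $T_\kk$ shows that, for every $\delta > 0$, all but finitely many $T_{\kk_n}(B)$ are contained in the $\delta$-neighbourhood of the totally bounded set $T_\kk(B)$, while each exceptional $T_{\kk_n}(B)$ is relatively compact since $T_{\kk_n}$ is itself compact. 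Consequently $\bigcup_n T_{\kk_n}(B)$ is totally bounded, hence relatively compact in $L^p$. Lemma~\ref{lem:prop-spec-mult}~\ref{item:collectK-cv} then yields $\spec(T_{\kk_n \eta_n}) \to \spec(T_{\kk \eta^*})$ in $(\ck, d_\mathrm{H})$, while Theorem~\ref{th:continuity-R} applied to the fixed kernel $\kk$ yields $\spec(T_{\kk \eta_n}) \to \spec(T_{\kk \eta^*})$. The two spectra have the same Hausdorff limit, contradicting the $\varepsilon$-separation.

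The main obstacle is lifting a pointwise spectral continuity (in either $\kk$ or $\eta$) to a statement uniform in $\eta \in \Delta$: the weak topology on $\Delta$ is compact but $\eta \mapsto T_{\kk \eta}$ is only strongly continuous, not norm continuous, so no direct Lipschitz bound in $\eta$ can be paired with the Lipschitz bound in $\kk$. The route above handles this by combining uniform norm convergence in $\kk$ (which provides collective compactness of the whole family) with strong convergence in $\eta$ along weakly convergent subsequences, so that Anselone's collectively compact framework recovers Hausdorff convergence of the spectra.
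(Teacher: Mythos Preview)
Your proof is correct and follows essentially the same route as the paper: both arguments establish collective compactness of the family $\{T_{\kk_n}\}\cup\{T_\kk\}$ from the norm convergence $T_{\kk_n}\to T_\kk$, combine this with the inclusion $T_{\kk_n\eta_n}(B)\subset T_{\kk_n}(B)$ and strong convergence along weakly convergent subsequences $\eta_n\to\eta^*$ to invoke Lemma~\ref{lem:prop-spec-mult}~\ref{item:collectK-cv}, and then use weak compactness of $\Delta$ together with Theorem~\ref{th:continuity-R} to lift the pointwise spectral convergence to the uniform statement. The paper phrases the last step by taking maximizers rather than arguing by contradiction, and cites \cite[Proposition~4.1(2)]{anselone} for the collective compactness step where you give a direct totally-bounded argument, but these are cosmetic differences.
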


\begin{proof}
  We first prove that~$\lim_{n\rightarrow\infty } \spec[\kk_n](\eta_n)= \spec[\kk](\eta)$,
  where the sequence~$(\eta_n, n\in \N)$ is any sequence in~$\Delta$ which converges
  weakly to~$\eta\in \Delta$.

  The operators~$\ca=\{T_\kk\} \cup \{T_{\kk_n}\, \colon\, n\in \N\}$ are compact, and we
  deduce from~\eqref{eq:double-norm-norm} that:
  \[
    \lim_{n\rightarrow\infty}\norm{T_{\kk_n} -T_\kk}_{L^p}=0.
  \]
  The   family~$\ca$   is   then   easily  seen   to   be   collectively
  compact.  (Indeed,  let  $(y_n=T_{\kk_{i_n}}(x_n),   n\in  \N)$  be  a
  sequence with  $\norm{x_n}\leq 1$, $i_n\in \N\cup\{\infty  \}$ and the
  convention  $\kk_{i_n}=\kk$   if  $i_n=\infty  $.   Up   to  taking  a
  sub-sequence, we can assume that  either the sequence $(i_n, n\in \N)$
  is  constant and  $(T_{\kk_{i_0}}(x_n),  n\in \N)$  is convergent  (as
  $T_{\kk_{i_0}}$ is compact)  or that the sequence $(i_n,  n\in \N)$ is
  increasing and  the sequence  $(T_\kk(x_n), n\in  \N)$ is convergent
  (as $T_{\kk}$ is compact)   towards a
  limit, say $y$. In the  former case, clearly
  the sequence $(y_n, n\in \N)$ converges.  In the latter case, we have:
  $\norm{T_{\kk_{i_n}}   (x_n)  -   y}_p   \leq  \norm{T_{\kk_{i_n}}   -
    T_\kk}_{L^p} +  \norm{T_{\kk} (x_n)  - y}_p$, which  readily implies
  that the sequence $(y_n, n\in  \N)$ converges towards $y$. This proves
  that  the family~$\ca$  is collectively  compact.)  This  implies, see
  \cite[Proposition~4.1(2)]{anselone}    for     details,    that    the
  family~$\ca'=\{T'M_\eta\,  \colon,  T'\in  \ca \text{  and  }  \eta\in
  \Delta\}$   is    collectively   compact.    We   deduce    that   the
  sequence~$(T_n=T_{\kk_n  \eta_n}  =T_{\kk_n}M_{\eta_n}, n\in  \N)$  of
  elements    of~$\ca'$    is     collectively    compact    and    that
  $T=T_{\kk\eta}=T_\kk M_\eta$ is compact.

  Let~$g\in L^p$. We have:
  \[
    \norm{T_n(g) - T(g)}_p
    \leq \norm{T_{\kk_n}-T_\kk}_{L^p} \, \norm{ g}_p + \norm{T_{\kk\eta_n}(g) -
    T_{\kk\eta}(g)}_p.
  \]
  Using~$\lim_{n\rightarrow\infty }\norm{T_{\kk_n} -T_\kk}_{L^p}=0$ and~\eqref{eq:cv-Tkn},
  we get that~$\lim_{n\rightarrow\infty } \norm{T_n(g) - T(g)}_p$, thus~$(T_n, n\in \N)$
  converges strongly to $T$. With Lemma~\ref{lem:prop-spec-mult}~\ref{item:collectK-cv},
  we get that~$\lim_{n\rightarrow \infty } \spec(T_n)=\spec(T)$, that is
  $\lim_{n\rightarrow\infty } \spec[\kk_n](\eta_n)= \spec[\kk](\eta)$. \medskip

  Then, as the function~$\eta \mapsto d_\mathrm{H}\Big ( \spec[\kk_n](\eta), \spec
  [\kk](\eta)\Big)$ is continuous on the compact set~$\Delta$, thanks to
  Theorem~\ref{th:continuity-R}, it reaches its maximum say at~$\eta_n\in \Delta$
  for~$n\in \N$. As~$\Delta$ is compact, consider a sub-sequence which converges weakly to
  a limit say~$\eta$. Since
  \begin{multline*}
    \sup_{\eta\in \Delta} d_\mathrm{H}\Big (
    \spec[\kk_n](\eta), \spec [\kk](\eta)\Big)\\
    \begin{aligned}
 & = d_\mathrm{H}\Big (
 \spec[\kk_n](\eta_n), \spec [\kk](\eta_n)\Big) \\
 & \leq d_\mathrm{H}\Big (
 \spec[\kk_n](\eta_n), \spec [\kk](\eta)\Big)
 + d_\mathrm{H}\Big (
 \spec[\kk](\eta_n), \spec [\kk](\eta)\Big),
    \end{aligned}
  \end{multline*}
  using the continuity of~$\spec[\kk]$, we deduce that along this sub-sequence the right
  hand side converges to 0. Since this result holds for any converging sub-sequence, we
  get the second part of~\eqref{eq:Re-stab}. The first part then follows from the
  definition~\eqref{eq:def-rho} of $R_e$ as a composition, and the Lipschitz continuity of
  the function~$\mathrm{rad}$.
\end{proof}

\subsection{The asymptotic proportion of infected individuals \texorpdfstring{$\I $}{}}

We consider the SIS model $[(\Omega, \cf, \mu), k, \gamma]$ under
Assumption~\ref{hyp:k-g}. Recall from~\eqref{eq:asymptotic_number_endemic} that the
asymptotic proportion of infected individuals~$\I $ is given on $\Delta$ by
$\I (\eta)=\int_\Omega \mathfrak{g}_\eta \, \eta\,
\mathrm{d}\mu$, where $\mathfrak{g}_\eta$ is the maximal solution in
$\Delta$ of the equation $F_\eta(h) = 0$.
We first give a preliminary result.
\begin{lemma}
  \label{lem:Fh>0} Let $\eta, g \in \Delta$. If $F_\eta(g) \geq 0$, then we have $g\leq
  \mathfrak{g}_\eta$.
\end{lemma}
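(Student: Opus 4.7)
The plan is to realize $g$ as the initial condition of the vaccinated SIS dynamics with vector field $F_\eta$, and to sandwich the resulting trajectory between the constant trajectory equal to $g$ (a sub-solution by hypothesis) and the trajectory starting from $\un$, which converges to the maximal equilibrium $\mathfrak{g}_\eta$.

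Let $u=(u_t,\, t\geq 0)$ and $w=(w_t,\, t\geq 0)$ denote the solutions of $\partial_t v_t = F_\eta(v_t)$ with initial conditions $u_0=g$ and $w_0=\un$ respectively; both exist and are unique by applying the well-posedness results of~\cite{delmas_infinite-dimensional_2020} to the kernel~$k\eta$. The vector field $F_\eta$ is quasi-monotone: if $v_1\leq v_2$ with $v_1(x_0)=v_2(x_0)$ at some point~$x_0$, then $F_\eta(v_1)(x_0)\leq F_\eta(v_2)(x_0)$, because the pointwise contributions $(1-v_i(x_0))$ and $\gamma(x_0)v_i(x_0)$ agree while $\Tinf_{k\eta}(v_1)(x_0)\leq \Tinf_{k\eta}(v_2)(x_0)$. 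This yields a Kamke-type comparison principle, valid also for sub- and super-solutions: if $\partial_t \tilde v_t \leq F_\eta(\tilde v_t)$ with $\tilde v_0 \leq v_0$ and $v$ solves the equation, then $\tilde v_t \leq v_t$ for all $t\geq 0$.

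I would then proceed in two steps. First, the constant trajectory $t\mapsto g$ is a sub-solution since $\partial_t g = 0 \leq F_\eta(g)$ by hypothesis, so comparison with $u$ yields $g \leq u_t$ for all $t\geq 0$; applying the same principle to the shifted trajectory $t\mapsto u_{t+s}$ (whose initial value $u_s$ dominates $u_0=g$) gives $u_{t+s}\geq u_t$, so $u$ is non-decreasing in $t$ and converges pointwise to some $u_\infty \in \Delta$. Second, since $g\leq \un$, comparison yields $u_t \leq w_t$ for all $t\geq 0$; the trajectory $w_t$ is non-increasing (the constant $\un$ is a super-solution) and dominates the equilibrium $\mathfrak{g}_\eta$ (by comparison with the stationary solution $t\mapsto \mathfrak{g}_\eta$), hence converges pointwise to an equilibrium which must be $\mathfrak{g}_\eta$ itself by maximality. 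Passing to the limit in $u_t \leq w_t$ gives $u_\infty \leq \mathfrak{g}_\eta$, and combined with $g \leq u_\infty$ we conclude $g \leq \mathfrak{g}_\eta$.

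The main obstacle is bookkeeping: verifying that the comparison principle in the precise form needed (sub-solutions, initial conditions in~$\Delta$, with the kernel~$k\eta$ in place of~$k$) is either already stated in~\cite{delmas_infinite-dimensional_2020} or can be extracted from the monotonicity arguments therein. The quasi-monotonicity computation above is essentially the only analytic input, so the whole argument is a routine monotone-iteration wrapped in the ODE framework.
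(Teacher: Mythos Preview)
Your argument is correct and rests on the same monotone-dynamics idea as the paper, but you take an unnecessary detour. The paper's proof uses only the trajectory $u$ starting from $g$: since $F_\eta(g)\geq 0$, the solution $u_t$ is non-decreasing (this is \cite[Proposition~2.10]{delmas_infinite-dimensional_2020}, which packages exactly the sub-solution comparison you spell out); its pointwise limit $u_\infty$ is then an equilibrium (\cite[Proposition~2.13]{delmas_infinite-dimensional_2020}), hence $u_\infty\leq\mathfrak{g}_\eta$ by the very definition of maximal equilibrium, and $g=u_0\leq u_\infty$ finishes it. Your second step---sandwiching by the trajectory $w$ from $\un$ and identifying its limit as $\mathfrak{g}_\eta$---is redundant: you end up invoking the same ``limit is an equilibrium'' fact for $w$ that the paper applies directly to $u$, and then appeal to maximality anyway. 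The cited propositions already contain the comparison/monotonicity bookkeeping you flag as the main obstacle, so there is nothing to verify beyond quoting them.
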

\begin{proof}
  According to \cite[Proposition~2.10]{delmas_infinite-dimensional_2020}, the solution
  $u_t$ of the SIS model with vaccination $\partial_t u_t = F_\eta(u_t)$ and initial
  condition $u_0 = g$ is non-decreasing since $F_\eta(g) \geq 0$. According to
  \cite[Proposition~2.13]{delmas_infinite-dimensional_2020}, the pointwise limit of $u_t$
  is an equilibrium. As this limit is dominated by the maximal equilibrium
  $\mathfrak{g}_\eta$ and since $u_t$ is non-decreasing, this proves that
  $g\leq\mathfrak{g}_\eta$.
\end{proof}

We may now state the main properties of the function $\I$.

\begin{proposition}[Basic properties of $\I $]\label{prop:I}
  Suppose that Assumption~\ref{hyp:k-g} holds. Let $\eta, \eta_1, \eta_2 \in \Delta$. The
  function~$\I $ has the following properties:
  \begin{propenum}
  \item\label{prop:a.s.-I} $\I (\eta_1)=\I (\eta_2)$ if $\eta_1=\eta_2$ $\mu$-almost
    surely.
  \item\label{prop:min-I} $\I (\eta)=0$ if and only if $R_e[k/\gamma](\eta) \leq 1$.
  \item\label{prop:mono-I} $\I (\eta_1) \leq \I (\eta_2)$ if $\eta_1\leq \eta_2$
    $\mu$-almost surely.
  \item\label{prop:hom-I} $\I (\lambda \eta) \leq \lambda \I (\eta)$ for all $\lambda \in
    [0,1]$.
  \end{propenum}
\end{proposition}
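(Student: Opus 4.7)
The plan is to verify the four items in order, with most of the work concentrated on point (iii), from which (iv) will follow almost for free.

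Points (i) and (ii) are essentially immediate. For (i), if $\eta_1=\eta_2$ $\mu$-a.s., then the multiplication operators $M_{\eta_1}$ and $M_{\eta_2}$ agree as elements of $\cll(\cl)$, hence $\Tinf_{k\eta_1}=\Tinf_{k\eta_2}$ and the vector fields $F_{\eta_1}$ and $F_{\eta_2}$ coincide, so their maximal equilibria agree $\mu$-a.s.; integrating $\mathfrak{g}_{\eta_i}\eta_i$ against $\mu$ then gives the same value. For (ii), the statement is precisely the contrapositive of the equivalence \eqref{eq:gh>0} already established from~\cite{delmas_infinite-dimensional_2020}.

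The key step is (iii), for which I would first prove the auxiliary monotonicity $\eta_1\leq\eta_2 \Rightarrow \mathfrak{g}_{\eta_1}\leq\mathfrak{g}_{\eta_2}$. Since $\Tinf_k$ is a positive operator and $0\leq\mathfrak{g}_{\eta_1}\leq 1$, the pointwise inequality $k\eta_1\leq k\eta_2$ gives
\[
  F_{\eta_2}(\mathfrak{g}_{\eta_1}) = (1-\mathfrak{g}_{\eta_1})\Tinf_{k\eta_2}(\mathfrak{g}_{\eta_1}) - \gamma\mathfrak{g}_{\eta_1} \geq (1-\mathfrak{g}_{\eta_1})\Tinf_{k\eta_1}(\mathfrak{g}_{\eta_1}) - \gamma\mathfrak{g}_{\eta_1} = F_{\eta_1}(\mathfrak{g}_{\eta_1}) = 0,
\]
so Lemma~\ref{lem:Fh>0} applied to $F_{\eta_2}$ yields $\mathfrak{g}_{\eta_1}\leq \mathfrak{g}_{\eta_2}$. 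Combining this with $\eta_1\leq\eta_2$ gives $\mathfrak{g}_{\eta_1}\eta_1\leq\mathfrak{g}_{\eta_2}\eta_2$ pointwise, and integration against $\mu$ yields $\I(\eta_1)\leq\I(\eta_2)$.

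Finally, for (iv) I would simply reuse the monotonicity of $\mathfrak{g}_\bullet$ just established: $\lambda\eta\leq\eta$ gives $\mathfrak{g}_{\lambda\eta}\leq\mathfrak{g}_\eta$, and factoring out $\lambda$ from the definition of $\I$ yields
\[
  \I(\lambda\eta) = \int_\Omega \mathfrak{g}_{\lambda\eta}\,(\lambda\eta)\,\mathrm{d}\mu = \lambda\int_\Omega \mathfrak{g}_{\lambda\eta}\,\eta\,\mathrm{d}\mu \leq \lambda\int_\Omega \mathfrak{g}_\eta\,\eta\,\mathrm{d}\mu = \lambda\,\I(\eta).
\]
The only nontrivial step is the supersolution comparison used to obtain the monotonicity of $\mathfrak{g}_\eta$; this is a standard ingredient but it hinges critically on the fact that $1-\mathfrak{g}_{\eta_1}\geq 0$ (so that enlarging $T_{k\eta}$ enlarges the vector field at this point) and on the already-proven Lemma~\ref{lem:Fh>0}, so no additional technical machinery is needed.
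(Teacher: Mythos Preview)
Your proof is correct and follows essentially the same route as the paper: points (i) and (ii) are dispatched exactly as the paper does, and for (iii) both you and the paper show $F_{\eta_2}(\mathfrak{g}_{\eta_1})\geq F_{\eta_1}(\mathfrak{g}_{\eta_1})=0$ and invoke Lemma~\ref{lem:Fh>0} to get $\mathfrak{g}_{\eta_1}\leq\mathfrak{g}_{\eta_2}$, from which (iv) follows by the same factoring-out-$\lambda$ argument. The only cosmetic difference is that the paper states the inequality $F_{\eta_1}\leq F_{\eta_2}$ for all $g\in\Delta$ before specializing to $g=\mathfrak{g}_{\eta_1}$, whereas you go directly to that specialization.
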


\begin{proof}
  If $\eta_1=\eta_2$ $\mu$-almost surely, then the operators $\Tinf_{k\eta_1}$ and
  $\Tinf_{k\eta_2}$ are equal. Thus, the equilibria $\mathfrak{g}_{\eta_1}$ and
  $\mathfrak{g}_{\eta_2}$ are also equal which in turns implies that $\I (\eta_1)=\I
  (\eta_2)$. Point~\ref{prop:min-I} is already stated in Equation~\eqref{eq:gh>0}.

  \medskip

  To prove the monotonicity (Point~\ref{prop:mono-I}), consider $\eta_1\leq \eta_2$. Since
  $\Tinf_{k\eta_1} \leq \Tinf_{k\eta_2}$, we get $F_{\eta_1}(g)\leq F_{\eta_2}(g)$ for all
  $g\in\Delta$. In particular, taking $g=\mathfrak{g}_{\eta_1}$ and
  using~\eqref{eq:F(g)=0}, we get $F_{\eta_2}(\mathfrak{g}_{\eta_1})\geq 0$. By
  Lemma~\ref{lem:Fh>0} this implies $\mathfrak{g}_{\eta_1}\leq \mathfrak{g}_{\eta_2}$. To
  sum up, we get:
  \begin{equation}
    \label{eq:mon-gh}
    \eta_1 \leq \eta_2 \quad \Longrightarrow \quad \mathfrak{g}_{\eta_1}\leq
    \mathfrak{g}_{\eta_2}.
  \end{equation}
  This readily implies that $ \I (\eta_1) = \int_\Omega
  \mathfrak{g}_{\eta_1}\, \eta_1 \, \mathrm{d}\mu \leq \int_\Omega
  \mathfrak{g}_{\eta_2}\, \eta_2 \, \mathrm{d}\mu = \I (\eta_2)$. We conclude using
  Point~\ref{prop:a.s.-I}.

  \medskip

  We now consider Point~\ref{prop:hom-I}. Since $\lambda\in [0, 1]$, we deduce
  from~\eqref{eq:mon-gh} that $\mathfrak{g}_{\lambda \eta} \leq
  \mathfrak{g}_{\eta}$. This implies that $ \I (\lambda \eta) = \int_\Omega
  \mathfrak{g}_{\lambda \eta}\, \lambda \eta \, \mathrm{d}\mu \leq \lambda \int_\Omega
  \mathfrak{g}_{\eta}\, \eta \, \mathrm{d}\mu = \lambda \I (\eta)$.
\end{proof}

The proof of the following continuity results are both postponed to
Section~\ref{sec:proof-I}.

\begin{theorem}[Continuity of $\I$]\label{th:continuity-I}
  Suppose that Assumption~\ref{hyp:k-g} holds. The function $\I $ defined on $\Delta$ is
  continuous with respect to the weak topology.
\end{theorem}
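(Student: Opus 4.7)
The plan is to reduce the problem to a subsequence argument: by Lemma~\ref{lem:D-compact}\ref{item:f-cont}, it is enough to show sequential continuity, so let $(\eta_n,n\in\N)$ be a sequence in $\Delta$ converging weakly to $\eta\in\Delta$. Setting $v_n=\eta_n\mathfrak{g}_{\eta_n}\in\Delta$, we have $\I(\eta_n)=\int v_n\,\mathrm{d}\mu$, so it suffices to prove that any weak cluster point of $(v_n)$ equals $\eta\mathfrak{g}_\eta$. Using the weak sequential compactness of $\Delta$ (Lemma~\ref{lem:D-compact}\ref{item:D-compact}), extract a subsequence (still indexed by $n$) along which, simultaneously, $v_n\to v_\infty$ and $\mathfrak{g}_{\eta_n}\to g_\infty$ weakly in $L^p$ for some $v_\infty,g_\infty\in\Delta$.

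The first key step is to exploit the compactness of the operator $T_k$ on $L^p$ (which holds under Assumption~\ref{hyp:k-g} since $\norm{k}_{\infty,q}\leq\norm{\gamma}_\infty \norm{k/\gamma}_{\infty,q}<+\infty$). This turns the weak convergence $v_n\to v_\infty$ into strong convergence $\Tinf_k(v_n)\to \Tinf_k(v_\infty)$ in $L^p$ and, up to a further extraction, $\mu$-a.e. Rewriting the equilibrium equation $F_{\eta_n}(\mathfrak{g}_{\eta_n})=0$ as
\[
  \mathfrak{g}_{\eta_n}\,(\gamma+\Tinf_k(v_n)) = \Tinf_k(v_n),
\]
one passes to the pointwise limit: on $\{\gamma+\Tinf_k(v_\infty)>0\}$ this forces $\mathfrak{g}_{\eta_n}\to \Tinf_k(v_\infty)/(\gamma+\Tinf_k(v_\infty))$, while on its complement the equation is trivially satisfied in the limit. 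Since the pointwise limit is bounded in $\Delta$, dominated convergence gives $\mathfrak{g}_{\eta_n}\to \tilde g$ strongly in $L^p$; as it also equals the weak limit, $\tilde g=g_\infty$. Writing $\eta_n\mathfrak{g}_{\eta_n}=\eta_n g_\infty + \eta_n(\mathfrak{g}_{\eta_n}-g_\infty)$, the first term tends weakly to $\eta g_\infty$ (since $g_\infty\in L^\infty$) and the second tends to $0$ strongly in $L^p$, so $v_\infty=\eta g_\infty$.

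The second step is to identify $g_\infty$ with $\mathfrak{g}_\eta$ via Proposition~\ref{cor:stab_g_star}\ref{cor:h=g}. Passing to the limit in the equation $(1-\mathfrak{g}_{\eta_n})\Tinf_k(v_n)=\gamma\mathfrak{g}_{\eta_n}$ using the convergences above and $\Tinf_k(\eta g_\infty)=\Tinf_{k\eta}(g_\infty)$ yields $F_\eta(g_\infty)=0$, so $g_\infty$ is an equilibrium. For the spectral condition, Proposition~\ref{cor:stab_g_star}\ref{cor:h=g} applied to the kernel $k\eta_n$ gives $R_e[k\eta_n/\gamma](1-\mathfrak{g}_{\eta_n})\leq 1$, i.e.\ $R_e[k/\gamma](\eta_n(1-\mathfrak{g}_{\eta_n}))\leq 1$. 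By the same strong/weak argument $\eta_n(1-\mathfrak{g}_{\eta_n})\to \eta(1-g_\infty)$ weakly, and the continuity of $R_e[k/\gamma]$ from Theorem~\ref{th:continuity-R} yields $R_e[k\eta/\gamma](1-g_\infty)\leq 1$. Proposition~\ref{cor:stab_g_star}\ref{cor:h=g} applied in the reverse direction identifies $g_\infty=\mathfrak{g}_\eta$, whence $v_\infty=\eta\mathfrak{g}_\eta$ and $\I(\eta_n)\to\I(\eta)$.

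The main obstacle is that $\I(\eta)=\int\eta\mathfrak{g}_\eta\,\mathrm{d}\mu$ involves a product of two quantities that only converge weakly under the sole hypothesis $\eta_n\to\eta$ weakly, and products are not preserved by weak convergence. The crucial observation is that the fixed-point equation $F_{\eta_n}(\mathfrak{g}_{\eta_n})=0$ expresses $\mathfrak{g}_{\eta_n}$ as an algebraic function of $\Tinf_k(v_n)$; compactness of $T_k$ then propagates the weak convergence of $v_n$ into pointwise (hence strong $L^p$) convergence of $\mathfrak{g}_{\eta_n}$, which breaks the symmetry and allows the passage to the limit both in the equilibrium equation and in the spectral inequality provided by Proposition~\ref{cor:stab_g_star}.
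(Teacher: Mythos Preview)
Your proof is correct and follows essentially the same approach as the paper. The paper's argument (packaged as Lemma~\ref{lem:cvgn2}) also extracts a weakly convergent subsequence of $h_n=\eta_n\mathfrak{g}_{\eta_n}$, uses the fixed-point form $\mathfrak{g}_{\eta_n}=\Tinf_{k/\gamma}(h_n)/(1+\Tinf_{k/\gamma}(h_n))$ to upgrade to almost-sure (hence $L^p$) convergence of $\mathfrak{g}_{\eta_n}$, checks that the limit is an equilibrium, and then identifies it as the maximal one via the spectral criterion $R_e(1-h)\le 1$ together with the weak continuity of $R_e$. The only cosmetic differences are that the paper works with $\Tinf_{k/\gamma}$ rather than $\Tinf_k$ and obtains pointwise convergence of $\Tinf_{k/\gamma}(h_n)$ directly from the definition of weak convergence instead of invoking compactness of the integral operator; also, since $\gamma>0$ everywhere under Assumption~\ref{hyp:k-g}, your case split on $\{\gamma+\Tinf_k(v_\infty)>0\}$ is in fact unnecessary.
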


We write $\I[k, \gamma]$ for $\I$ to stress the dependence on the
parameters $k, \gamma$ of the SIS model.

\begin{proposition}[Stability of $\I$]\label{prop:I-stab}
  Let $((k_n, \gamma_n), n\in \N)$ and $(k,\gamma)$ be a sequence of kernels and functions
  satisfying Assumption~\ref{hyp:k-g}. Assume furthermore that there exists $p'\in (1,
  +\infty )$ such that $\kk=\gamma^{-1} k$ and $(\kk_n=\gamma^{-1}_n k_n, n\in \N)$ have
  finite double norm in $L^{p'}$ and that $\lim_{n\rightarrow\infty } \norm{\kk_n
  -\kk}_{p',q'}=0$. Then we have:
  \begin{equation}\label{eq:I-stab}
    \lim_{n\rightarrow\infty }\, \sup_{\eta\in \Delta} \Big|\I[k_n, \gamma_n](\eta) -
    \I[k,\gamma](\eta)\Big| = 0.
  \end{equation}
\end{proposition}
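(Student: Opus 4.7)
The equilibrium equation $F_\eta(g) = 0$ rewrites, after division by $\gamma$, as $(1-g)T_{\kk\eta}(g) = g$, which involves only $\kk = k/\gamma$. Hence $\mathfrak{g}_\eta$ and $\I(\eta) = \int \mathfrak{g}_\eta \eta\,\mathrm{d}\mu$ depend only on $\kk$, and \eqref{eq:I-stab} becomes $\sup_{\eta \in \Delta}|\I[\kk_n](\eta) - \I[\kk](\eta)| \to 0$. Since $\I[\kk_n]$ and $\I[\kk]$ are continuous on the compact set $\Delta$ (Theorem~\ref{th:continuity-I}, Lemma~\ref{lem:D-compact}), the supremum is attained at some $\eta_n$; extracting a weakly convergent subsequence $\eta_n \to \eta$ and using the continuity of $\I[\kk]$, the task reduces (exactly as in the proof of Proposition~\ref{prop:Re-stab}) to proving that $\I[\kk_n](\eta_n) \to \I[\kk](\eta)$ along this subsequence.

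\textbf{Step 2 (Strong convergence of equilibria).} I would set $g_n = \mathfrak{g}_{\eta_n}[\kk_n]$ and $h_n = T_{\kk_n \eta_n}(g_n)$, so that the fixed point equation $(1-g_n)h_n = g_n$ yields $g_n = h_n/(1+h_n)$. By the collectively compact argument in the proof of Proposition~\ref{prop:Re-stab}, the family $\{T_{\kk'}M_\theta : \kk' \in \{\kk_n\}_{n\in \N} \cup \{\kk\},\, \theta \in \Delta\}$ is collectively compact in $L^{p'}$, so up to extraction $h_n \to h_\infty$ strongly in $L^{p'}$. Since $x \mapsto x/(1+x)$ is $1$-Lipschitz on $\R_+$, this forces $g_n \to g_\infty := h_\infty/(1+h_\infty)$ strongly in $L^{p'}$. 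To identify $h_\infty$, I would decompose
\[
h_n - T_{\kk\eta}(g_\infty) = (T_{\kk_n} - T_{\kk})(\eta_n g_n) + T_{\kk}\bigl(\eta_n(g_n - g_\infty)\bigr) + T_{\kk}\bigl((\eta_n - \eta) g_\infty\bigr);
\]
the first term vanishes by \eqref{eq:double-norm-norm}, the second by strong convergence of $g_n$ and boundedness of $T_{\kk}$, and the third by compactness of $T_{\kk}$ applied to the weakly null sequence $(\eta_n - \eta)g_\infty$. Hence $h_\infty = T_{\kk\eta}(g_\infty)$ and $g_\infty$ is an equilibrium for $(\kk,\eta)$: $(1 - g_\infty)T_{\kk\eta}(g_\infty) = g_\infty$.

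\textbf{Step 3 (Maximality and conclusion).} The hard part is showing that $g_\infty$ is the \emph{maximal} equilibrium, rather than merely some equilibrium. For this I would invoke Proposition~\ref{cor:stab_g_star}\ref{cor:h=g} applied to the vaccinated $(\kk_n, \eta_n)$-model, which gives $R_e[\kk_n](\eta_n(1 - g_n)) \leq 1$. Since $g_n \to g_\infty$ almost surely along a further subsequence while $\eta_n \to \eta$ weakly, dominated convergence yields $\eta_n(1 - g_n) \to \eta(1 - g_\infty)$ weakly in $L^{p'}$. Combining the stability of $R_e$ (Proposition~\ref{prop:Re-stab}) with the continuity of $R_e[\kk]$ (Theorem~\ref{th:continuity-R}) then gives $R_e[\kk_n](\eta_n(1-g_n)) \to R_e[\kk](\eta(1 - g_\infty)) \leq 1$, and Proposition~\ref{cor:stab_g_star} identifies $g_\infty = \mathfrak{g}_\eta[\kk]$. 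Finally, $\I[\kk_n](\eta_n) = \int g_n\eta_n\,\mathrm{d}\mu \to \int g_\infty \eta\,\mathrm{d}\mu = \I[\kk](\eta)$ by the same weak-strong argument, completing the proof.
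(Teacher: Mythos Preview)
Your proof is correct and follows essentially the same route as the paper's own argument (Lemma~\ref{lem:cvgn2} together with the compactness/subsequence reduction at the end of the proof of Proposition~\ref{prop:Re-stab}): reduce to a weakly convergent sequence $(\eta_n)$, show the equilibria converge, and identify the limit as the \emph{maximal} equilibrium via the criterion $R_e(\eta(1-g))\leq 1$ from Proposition~\ref{prop:caract-g}. The only substantive difference is in Step~2, where the paper extracts a weak limit of $\eta_n g_n$ and then argues pointwise, while you extract a strong $L^{p'}$ limit of $T_{\kk_n\eta_n}(g_n)$ via collective compactness; both reach the same conclusion. One notational slip: in Step~1 you write $\kk=k/\gamma$, but in the paper's convention $k/\gamma$ means $(x,y)\mapsto k(x,y)/\gamma(y)$, whereas the kernel appearing after dividing $F_\eta(g)=0$ by $\gamma$ is $\gamma^{-1}k:(x,y)\mapsto k(x,y)/\gamma(x)$ --- exactly the $\kk$ in the hypothesis of the proposition.
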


\section{Pareto and anti-Pareto frontiers}\label{sec:P-AntiP}

\subsection{The setting}\label{sec:cost-loss}

To any vaccination strategy $\eta \in \Delta$, we associate a cost and a loss.
\begin{itemize}
  \item\textbf{\emph{The cost function}}. The \emph{cost} $C(\eta)$ measures all the costs
    of the vaccination strategy (production and diffusion). The cost is expected to be a
    decreasing function of $\eta$, since $\eta$ encodes the non-vaccinated population.
    Since doing nothing costs nothing, we also expect $C(\un)=0$, see
    Assumptions \ref{hyp:loss+cost} below. We shall also  consider
    natural hypothesis on $C$, see Assumptions~\ref{hyp:cost}
    and~\ref{hyp:cost**}. 
    A simple  cost model is the
    affine cost given by:
    \begin{equation}\label{eq:def-C0} \costa(\eta)=\int_\Omega (1-\eta(x))\, \costad(x) \,
      \mu(\mathrm{d} x),
    \end{equation}
    where $\costad(x)$ is the cost of vaccination of population of feature $x$, with
    $\costad\in L^1$ positive. The particular case $\costad=1$ is the uniform cost~$C =
    \costu$:
    \begin{equation}\label{eq:def-C} \costu (\eta) = \int_\Omega (1 - \eta) \,
      \mathrm{d}\mu.
    \end{equation}
    The real cost of the vaccination may be a more complicated function
    $\psi(\costa(\eta))$
    of the affine cost, for example if the marginal cost of producing a vaccine depends on
    the quantity already produced. However, as long as $\psi$ is strictly increasing, this
    will not affect the optimal strategies.

  \item\textbf{\emph{The loss function}}. The \emph{loss} $\loss(\eta)$ measures the
    (non)-efficiency of the vaccination strategy~$\eta$. Different choices are possible
    here. We prove in this section general results that only depend on a few natural
    hypothesis  for $\loss$; see Assumptions \ref{hyp:loss+cost}, \ref{hyp:loss} and
    \ref{hyp:loss**}. These hypothesis are in particular satisfied if the loss is the
    effective reproduction number $R_e$ (kernel and SIS models), or the asymptotic
    proportion of infected individuals $\I$ (SIS model); more precisely see Lemmas
    \ref{lem:c-dec+L-hom}, \ref{lem:L**} and \ref{lem:L**=I}.
\end{itemize}

We shall consider  cost and loss
functions with some regularities.

\begin{definition}\label{def:cont, monot}
  We say that a real-valued function $H$ defined on $\Delta$ endowed with the weak
  topology is:
  \begin{itemize}
    \item \textbf{\emph{Continuous}}: if $H$ is continuous with respect to the weak
      topology on $\Delta$.
    \item \textbf{\emph{Non-decreasing}}: if for any $\eta_1, \eta_2\in \Delta$ such that
      $\eta_1\leq \eta_2$, we have $H(\eta_1)\leq H(\eta_2)$.
    \item \textbf{\emph{Decreasing}}: if for any $\eta_1, \eta_2\in \Delta$ such that
      $\eta_1\leq \eta_2$ and $\int_\Omega \eta_1\, \mathrm{d} \mu <\int_\Omega \eta_2\,
      \mathrm{d} \mu$, we have $H(\eta_1)>
      H(\eta_2)$.
    \item \textbf{\emph{Sub-homogeneous}}: if $H(\lambda \eta)\leq \lambda H(\eta)$ for
      all $\eta\in \Delta$ and $\lambda\in [0, 1]$.
  \end{itemize}
\end{definition}

The definition of non-increasing function and increasing function are similar.

\begin{hyp}[On the cost function and loss function]\label{hyp:loss+cost}
  The loss function $\loss: \Delta \rightarrow \R$ is non-decreasing and continuous with
  $\loss(0)=0$. The cost function $C: \Delta\rightarrow
  \R$ is non-increasing and continuous with $C(\un) = 0$. We also have:
  \[
  \maxloss:=\max_\Delta \, \loss>0\quad\text{and}\quad \maxcost:=\max_\Delta \, C>0.
  \]
\end{hyp}
Assumption \ref{hyp:loss+cost} will always hold. In particular, the loss
and the cost functions are non-negative and non-constant.
\medskip

We will consider the multi-objective minimization and maximization problems:
\optProbminmax{eq:multi-objective-minmax}{(C(\eta),\loss(\eta))}{\eta \in \Delta}%
Before going further, let us remark that for the reproduction number optimization in the
vaccination context, one can without loss of generality consider the uniform cost instead of
the affine cost.

\begin{remark}\label{rem:costa-costu}
  Consider the kernel model $\param=[(\Omega, \cf, \mu), \kk]$ with the affine cost
  function $\costa$ and the loss $R_e$. Furthermore, if we assume that $\costad$ is
  bounded and bounded away from 0 (that is $\costad$ and $1/\costad$ belongs to $\cl_+$),
  and without loss of generality, that $\int \costad \, \mathrm{d} \mu=1$, then we can
  consider the weighted kernel model $\param_0=[(\Omega, \cf, \mu_0), \kk_0]$ with measure
  $\mu_0(\mathrm{d} x)=\costad(x) \, \mu(\mathrm{d} x)$ and kernel $\kk_0= \kk/\costad$.
  (Notice that if Assumption~\ref{hyp:k-g} holds for the model $\param$, then it also
  holds for the model $\param_0$.) Consider the loss $\loss=R_e$. Then for a strategy
  $\eta\in \Delta$, we get that $(\costa(\eta), \loss (\eta))$ for the model $\param$ is
  equal to $(\costu(\eta), \loss (\eta))$ for the model $\param_0$. Therefore, for the loss
  function $\loss=R_e$, instead of the affine cost $\costa$, one can consider without any
  real loss of generality the uniform cost. (This holds also for the SIS model.) However,
  this is no longer the case for the loss function $\loss=\I$ in the SIS model.
\end{remark}

Multi-objective problems are in a sense ill-defined because in most cases, it is
impossible to find a single solution that would be optimal to all objectives
simultaneously. Hence, we recall the concept of Pareto optimality. Since the minimization
problem is crucial for vaccination, we shall define Pareto optimality for the bi-objective
minimization problem. A strategy $\eta_\star \in \Delta$ is said to be \emph{Pareto
optimal} for the minimization problem in \eqref{eq:multi-objective-minmax} if any
improvement of one objective leads to a deterioration of the other, for $\eta\in \Delta$:
\begin{equation}\label{eq:defParetoOptimal}
  C(\eta)<C(\eta_\star) \implies \loss(\eta) >\loss(\eta_\star)
  \quad\text{and}\quad
  \loss(\eta)<\loss(\eta_\star) \implies C(\eta) > C(\eta_\star).
\end{equation}
Similarly, a strategy $\eta^\star\in \Delta$ is \emph{anti-Pareto optimal} if it is Pareto
optimal for the bi-objective maximization problem in \eqref{eq:multi-objective-minmax}.
Intuitively, the ``best'' vaccination strategies are the Pareto optima and the ``worst''
vaccination strategies are the anti-Pareto optima. \medskip

We define the \emph{feasible region} as all possible outcomes:
\[
  \FF =\{ (C(\eta),\loss(\eta)), \,\eta\in \Delta\}.
\]
Then, we first consider the minimization problem for the ``best'' strategies. The set of
Pareto optimal strategies will be denoted by~$\cp_\loss$, and the Pareto frontier is
defined as the set of Pareto optimal outcomes:
\[
  \mathcal{F}_\loss = \{ (C(\eta),\loss(\eta))\,\colon\, \eta \text{ Pareto optimal}\}.
\]
We consider the minimization problems related to the ``best'' vaccination strategies, with
$\ell \in [0, \maxloss]$ and $c\in [0, \maxcost]$:
\optProbBis{eq:Prob1}{eq:optProb1}{eq:constraint1}{\loss(\eta)}{\eta\in\Delta, \,
C(\eta)\leq c,} as well as
\optProbBis{eq:Prob2}{eq:optProb2}{eq:constraint2}{C(\eta)}{\eta\in\Delta, \,
\loss(\eta)\leq \ell.} We denote the values of Problems~\eqref{eq:Prob1} and
\eqref{eq:Prob2} by:
\begin{align*}
  \lossinf(c)
&=\inf\{ \loss(\eta)\, \colon\, \eta\in \Delta \text{ and }
C(\eta) \leq c\} \quad\text{for $c\in [0, \maxcost]$},\\
\Cinf(\ell)
&=\inf\{ C(\eta)\, \colon\, \eta\in \Delta \text{ and } \loss(\eta) \leq
\ell\}\quad\text{for $\ell\in [0, \maxloss]$}.
\end{align*}

We now consider the maximization problem related to the ``worst'' vaccination strategies,
with $\ell \in [0, \maxloss]$ and $c\in [0, \maxcost]$:
\optProbTer{eq:Prob1**}{eq:optProb1**}{eq:constraint1**}{\loss(\eta)}{\eta\in\Delta, \,
C(\eta)\geq c,} as well as
\optProbTer{eq:Prob2**}{eq:optProb2**}{eq:constraint2**}{C(\eta)}{\eta\in\Delta, \,
\loss(\eta)\geq \ell.} We denote the values of Problems~\eqref{eq:Prob1**} and
\eqref{eq:Prob2**} by:
\begin{align*}
  \lossup(c)
&=\sup\{ \loss(\eta)\, \colon\, \eta\in \Delta \text{ and }
C(\eta) \geq c\} \quad\text{for $c\in [0, \maxcost]$},\\
\Csup(\ell)
&=\sup\{ C(\eta)\, \colon\, \eta\in \Delta \text{ and } \loss(\eta) \geq
\ell\}\quad\text{for $\ell\in [0, \maxloss]$}.
\end{align*}
We denote by~$\cp_\loss^{\mathrm{Anti}}$ the set of anti-Pareto optimal strategies, and by
$\mathcal{F}_\loss^{\mathrm{Anti}}$ its frontier:
\[
  \mathcal{F}_\loss^{\mathrm{Anti}} = \{ (C(\eta), \loss(\eta))\,\colon\, \eta
  \text{ anti-Pareto optimal}\}.
\]

If necessary, we may write $\CinfL$ and $\CsupL$ to stress the dependence of the function
$\Cinf$ and $\Csup$ in the loss function $\loss$.

Under  Assumption \ref{hyp:loss+cost}, as the loss and the cost functions are
 continuous on the compact set
$\Delta$, the infima in the definitions of the value functions $\Cinf$ and $\lossinf$ are
minima; and the suprema in the definition of the value functions $\Csup$ and $\lossup$ are
maxima. Since $\Delta$ in endowed with the weak topology, we will consider the set of
Pareto and anti-Pareto optimal vaccination modulo $\mu$-almost sure equality.

\medskip

See Figure \ref{fig:typical_frontier} for a typical representation of the possible aspects
of the feasible region~$\FF$ (in light blue), the value functions and the Pareto and
anti-Pareto frontiers under the general Assumption~\ref{hyp:loss+cost}, and the connected
Pareto and anti-Pareto frontiers under further regularity on the cost and loss functions
(see Assumption~\ref{hyp:cost}-\ref{hyp:loss**} below) in Figure~\ref{fig:reg_frontier}.
In Figure~\ref{fig:pareto_frontier}, we have plotted in solid red line the Pareto frontier
and in dashed red line the anti-Pareto frontier from Example \ref{ex:multipartite}.

\subsubsection*{Outline of the section}

It turns out that the anti-Pareto optimization problem can be recast as a Pareto
optimization problem by changing signs and exchanging the cost and loss functions. In
order to make use of this property for the kernel and SIS models, we study the Pareto
problem under assumptions on the cost that are general enough to cover the choices
$\costu$ and $- \loss$, and assumptions on the loss that cover the choices $R_e$, $\I$ and
$-\costu$.

\medskip

The main result of this section states that all the solutions of the optimization
Problems~\eqref{eq:Prob1} or~\eqref{eq:Prob2} are Pareto optimal, and gives a description
of the Pareto frontier $\mathcal{F}_\loss$ as a graph in Section \ref{sec:Pareto-F}, and
similarly for the anti-Pareto frontier in Section \ref{sec:Pareto-AF}. Surprisingly, the
problem is not completely symmetric, compare Lemma \ref{lem:c-dec+L-hom} used for the
Pareto frontier and Lemmas \ref{lem:L**} and \ref{lem:L**=I} used for the anti-Pareto
frontier. In the latter lemmas, notice the kernel considered is quasi-irreducible, whereas
this condition is not needed for the Pareto frontier.

\subsection{On the Pareto frontier}\label{sec:Pareto-F}

We first check that Problems \eqref{eq:Prob1} and \eqref{eq:Prob2} have solutions.

\begin{proposition}[Optimal solutions for fixed cost or fixed
  loss]\label{prop:main-result}
  Suppose that Assumption~\ref{hyp:loss+cost} holds. For any cost $c\in[0,\maxcost]$,
  there exists a minimizer of the loss under the cost constraint $C(\cdot)\leq c$, that
  is, a solution to Problem~\eqref{eq:Prob1}. Similarly, for any loss
  $\ell\in[0,\maxloss]$, there exists a minimizer of the cost under the loss constraint
  $\loss(\cdot) \leq \ell$, that is a solution to Problem~\eqref{eq:Prob2}.
\end{proposition}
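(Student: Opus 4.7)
The plan is to apply the Weierstrass extreme value theorem: each problem minimizes a weakly continuous function over a weakly closed subset of the weakly compact set $\Delta$.

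For Problem~\eqref{eq:Prob1}, fix $c \in [0,\maxcost]$ and set $\Delta_c = \{\eta \in \Delta : C(\eta) \leq c\}$. First I would check that $\Delta_c$ is non-empty: under Assumption~\ref{hyp:loss+cost} we have $C(\un) = 0 \leq c$, so $\un \in \Delta_c$. Next, since $C$ is continuous for the weak topology on $\Delta$, the set $\Delta_c = C^{-1}((-\infty, c])$ is weakly closed in $\Delta$. By Lemma~\ref{lem:D-compact}~\ref{item:D-compact}, $\Delta$ is weakly compact, so $\Delta_c$ is weakly compact as a closed subset. The loss function $\loss$ is weakly continuous, so it attains its infimum on the non-empty compact set $\Delta_c$, yielding a solution to~\eqref{eq:Prob1}.

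For Problem~\eqref{eq:Prob2}, the argument is symmetric. Fix $\ell \in [0,\maxloss]$ and set $\Delta^\ell = \{\eta \in \Delta : \loss(\eta) \leq \ell\}$. This set contains $0$ since $\loss(0) = 0 \leq \ell$, and is weakly closed by continuity of $\loss$, hence weakly compact. The cost function $C$ being weakly continuous, it attains its infimum on $\Delta^\ell$, which provides a solution to~\eqref{eq:Prob2}.

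There is no real obstacle here: the proof is a direct application of the compactness of $\Delta$ (the key topological input from Lemma~\ref{lem:D-compact}) combined with the continuity assumptions bundled into Assumption~\ref{hyp:loss+cost}. The only subtlety worth mentioning is that weak continuity could, in principle, be replaced by sequential weak continuity thanks to Lemma~\ref{lem:D-compact}~\ref{item:f-cont}, but this is not needed since the assumption is stated directly as weak continuity.
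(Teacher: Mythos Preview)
Your proof is correct and follows essentially the same approach as the paper: both argue that the constraint set is non-empty (containing $\un$ for Problem~\eqref{eq:Prob1}), weakly closed by continuity of $C$, hence weakly compact in $\Delta$, so the continuous loss attains its minimum. The only minor difference is that you spell out the symmetric case for Problem~\eqref{eq:Prob2} explicitly (noting $0$ lies in the constraint set since $\loss(0)=0$), whereas the paper simply states the proof is similar.
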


\begin{proof}
  Let $c \in [0,\maxcost]$. The set $\{\eta\in \Delta \,\colon\, C(\eta)\leq c\}$ is
  non-empty as it contains $\un$1 since $C(\un)=0$. It is also compact as $C$ is continuous on
  the compact set $\Delta$ (for the weak topology). Therefore, since the loss function
  $\loss$ is continuous (for the weak topology), we get that $\loss$ restricted to this
  compact set reaches its minimum. Thus, Problem~\eqref{eq:Prob1} has a solution. The
  proof is similar for the existence of a solution to Problem~\eqref{eq:Prob2}.
\end{proof}

We start by a general result concerning the links between the three problems.

\begin{proposition}[Single-objective and bi-objective problems]\label{thm:single-bi}
  Suppose Assumption \ref{hyp:loss+cost} holds.
  \begin{propenum}
  \item\label{single-bi} If $\eta_\star$ is Pareto optimal, then $\eta_\star$ is a
    solution of~\eqref{eq:Prob1} for the cost $c=C(\eta_\star)$, and a solution
    of~\eqref{eq:Prob2} for the loss $\ell=\loss(\eta_\star)$. Conversely, if $\eta_\star$
    is a solution to both problems \eqref{eq:Prob1} and \eqref{eq:Prob2} for some values
    $c$ and $\ell$, then $\eta_\star$ is Pareto optimal.
  \item\label{intersection} The Pareto frontier is the intersection of the graphs of
    $\Cinf$ and $\lossinf$: \[ \mathcal{F}_\loss = \{ (c,\ell)\in [0, \maxcost]\times [0,
    \maxloss]\, \colon\, c=\Cinf(\ell) \text{ and } \ell = \lossinf(c)\}. \]
  \item\label{special_points} The points $(0,\lossinf(0))$ and $(\Cinf(0),0)$ both belong
    to the Pareto frontier, and we have $\Cinf(\lossinf(0)) = \lossinf(\Cinf(0))=0$.
    Moreover, we also have $\Cinf(\ell) = 0$ for $\ell\in [\lossinf(0), \maxloss]$, and
    $\lossinf(c) = 0$ for $c\in [\Cinf(0), \maxcost]$.
  \end{propenum}
\end{proposition}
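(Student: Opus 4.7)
The three parts can be handled in turn; the argument is essentially formal bookkeeping from \eqref{eq:defParetoOptimal} and the definitions of $\lossinf, \Cinf$, and the only subtle point is the careful handling of strict inequalities. Throughout, Assumption~\ref{hyp:loss+cost} combined with monotonicity gives $\loss \geq 0$ and $C \geq 0$ on $\Delta$ (since $0 \leq \eta \leq \un$), and by Proposition~\ref{prop:main-result} the infima defining $\lossinf$ and $\Cinf$ are attained.

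For \ref{single-bi}, the forward direction is a direct contrapositive: if $\eta_\star$ is Pareto optimal but fails to solve \eqref{eq:Prob1} at $c = C(\eta_\star)$, then some $\eta \in \Delta$ satisfies $C(\eta) \leq C(\eta_\star)$ and $\loss(\eta) < \loss(\eta_\star)$, violating the second implication in \eqref{eq:defParetoOptimal}; the case of \eqref{eq:Prob2} is symmetric. Conversely, assume $\eta_\star$ solves both problems for some $c$ and $\ell$, and take $\eta$ with $C(\eta) < C(\eta_\star) \leq c$. Then $\eta$ is feasible for \eqref{eq:Prob1}, so $\loss(\eta) \geq \loss(\eta_\star)$; if equality held, then $\loss(\eta) = \loss(\eta_\star) \leq \ell$ would make $\eta$ feasible for \eqref{eq:Prob2}, forcing $C(\eta) \geq C(\eta_\star)$, a contradiction. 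The dual implication follows by exchanging the roles of $C$ and $\loss$.

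Part \ref{intersection} then follows formally. If $\eta_\star$ is Pareto optimal, \ref{single-bi} gives $\loss(\eta_\star) = \lossinf(C(\eta_\star))$ and $C(\eta_\star) = \Cinf(\loss(\eta_\star))$, yielding $\mathcal{F}_\loss \subseteq \{(c,\ell) : c = \Cinf(\ell),\ \ell = \lossinf(c)\}$. Conversely, given $(c,\ell)$ with $c = \Cinf(\ell)$ and $\ell = \lossinf(c)$, pick $\eta_\star$ attaining the minimum in \eqref{eq:Prob1} at constraint $c$. Then $\loss(\eta_\star) = \ell$ and $C(\eta_\star) \leq c$; since $\eta_\star$ is feasible for \eqref{eq:Prob2} at constraint $\ell$, we also get $C(\eta_\star) \geq \Cinf(\ell) = c$, so $C(\eta_\star)=c$. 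Thus $\eta_\star$ solves both single-objective problems, and \ref{single-bi} places $(c,\ell) \in \mathcal{F}_\loss$.

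For \ref{special_points}, let $\eta^*$ attain the minimum in \eqref{eq:Prob2} at $\ell = 0$; then $\loss(\eta^*) = 0$ and $C(\eta^*) = \Cinf(0)$. Since $\lossinf \geq 0$, this forces $\lossinf(\Cinf(0)) = 0$, and \ref{intersection} places $(\Cinf(0), 0)$ on the Pareto frontier. The symmetric argument at $c = 0$ yields $(0, \lossinf(0)) \in \mathcal{F}_\loss$ and $\Cinf(\lossinf(0)) = 0$. The final identities on $\lossinf$ and $\Cinf$ follow because their defining constraint sets are nested monotonically, making both functions non-increasing in their argument, so combined with their non-negativity and the boundary values just computed they must vanish on the claimed intervals. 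The only genuinely subtle step is the converse of \ref{single-bi}, where both single-objective problems must be invoked simultaneously in order to lift the weak inequalities afforded by feasibility to the strict ones demanded by Pareto optimality.
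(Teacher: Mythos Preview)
Your proof is correct and follows essentially the same route as the paper's. The only minor variation is in the converse of \ref{single-bi}: the paper first observes that a solution of \eqref{eq:Prob1} for some $c$ is also a solution for $c=C(\eta_\star)$, and then derives each Pareto implication from a single one of the two problems, whereas you invoke both problems together to upgrade the weak inequality to a strict one; both arguments are equally valid and of comparable length.
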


\begin{proof}
  Let us prove~\ref{single-bi}. If $\eta_\star$ is Pareto optimal, then for any strategy
  $\eta$, if $C(\eta)\leq C(\eta_\star)$ then $\loss(\eta)\geq \loss(\eta_\star)$ by
  taking the contraposition in~\eqref{eq:defParetoOptimal}, and $\eta_\star$ is indeed a
  solution of Problem~\eqref{eq:Prob1} with $c=C(\eta_\star)$. Similarly $\eta_\star$ is a
  solution of Problem~\eqref{eq:Prob2}.

  For the converse statement, let $\eta_\star$ be a solution of \eqref{eq:Prob1} for some
  $c$ and of \eqref{eq:Prob2} for some~$\ell$. It is also a solution of \eqref{eq:Prob1}
  with $c=C(\eta_\star)$. In particular, we get that for $\eta\in \Delta$, $\loss(\eta)<
  \loss(\eta_\star)$ implies that $C(\eta)>c=C(\eta_\star)$, which is the second part of
  \eqref{eq:defParetoOptimal}. Similarly, use that $\eta_\star$ is a solution to
  \eqref{eq:Prob2}, to get that the first part of \eqref{eq:defParetoOptimal} also holds.
  Thus the strategy $\eta_\star$ is Pareto optimal.

  \medskip

  To prove Point~\ref{intersection}, we first prove that $
  \mathcal{F}_\loss $ is a subset of $\{ (c,\ell)\, \colon\, c=\Cinf(\ell) \text{ and } \ell =
  \lossinf(c)\}$.
  A point in $\mathcal{F}_\loss$ may be written as $(C(\eta_\star),
  \loss(\eta_\star))$ for some Pareto optimal strategy $\eta_\star$.
  By Point~\ref{single-bi}, $\eta_\star$ solves
  Problem~\eqref{eq:Prob1}
  for the cost $C(\eta_\star)$, so $\lossinf(C(\eta_\star)) =
  \loss(\eta_\star)$. Similarly, we have
  $\Cinf(\loss(\eta_\star)) =
  C(\eta_\star)$, as claimed.

  We now prove the reverse inclusion. Assume that $c=\Cinf(\ell)$ and $\ell= \lossinf(c)$,
  and consider $\eta$ a solution of Problem~\eqref{eq:Prob2} for the loss~$\ell$:
  $\loss(\eta) \leq \ell$ and $C(\eta) = \Cinf(\ell) = c$. Then $\eta$ is admissible for
  Problem~\eqref{eq:Prob1} with cost $c=\Cinf(\ell)$, so $\loss(\eta) \geq
  \lossinf(\Cinf(\ell)) = \lossinf(c) = \ell$. Therefore, we get $\loss(\eta) =
  L^\star(c)$, and $\eta$ is also a solution of Problem~\eqref{eq:Prob1}. By
  Point~\ref{single-bi}, $\eta$ is Pareto optimal, so $(C(\eta),\loss(\eta)) = (c,\ell)\in
  \mathcal{F}_\loss$, and the reverse inclusion is proved. \medskip

  Finally we prove Point~\ref{special_points}. We have $\Cinf(0)=\min \{ C( \eta)\,
    \colon\,
    \eta\in \Delta \text{ and }
  \loss(\eta)=0 \}\in [0, \maxcost]$. Let $\eta\in \Delta$ such that $\loss(\eta)=0$ and
  $C(\eta)=\Cinf(0)$. We deduce that $\lossinf(\Cinf(0))\leq \loss(\eta)=0$ and
  thus $\lossinf(\Cinf(0))=0$ as $\loss$ is non-negative. We deduce
  from~\ref{intersection} that $(\Cinf(0), 0)$ belongs to $
  \mathcal{F}_\loss $. Since $\Cinf$ is non-increasing, we also get
  that $\Cinf=0$ on $[\Cinf(0), \maxcost]$. The other properties of
  \ref{special_points} are proved similarly.
\end{proof}

\begin{figure}
  \begin{subfigure}[T]{.5\textwidth}
    \centering
    \includegraphics[page=1]{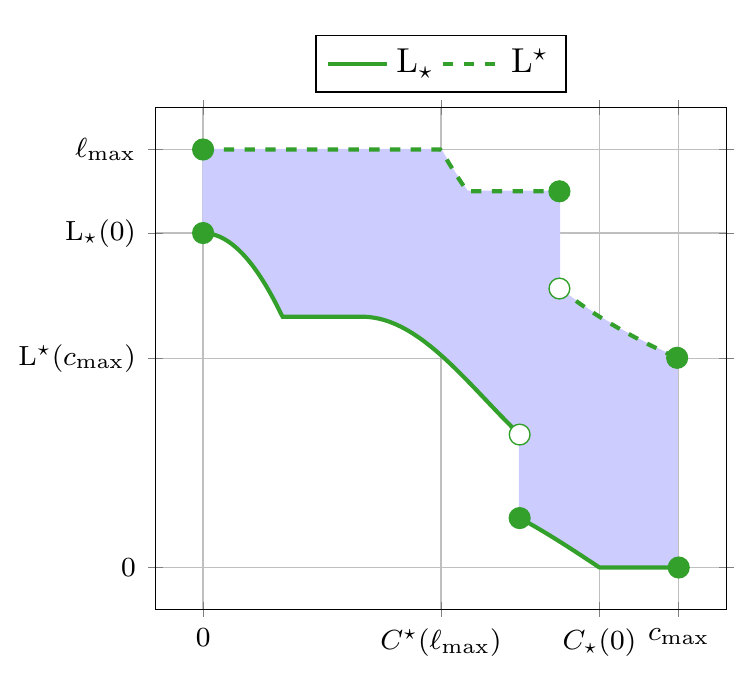}
    \caption{Value functions for Problems \eqref{eq:Prob1} and \eqref{eq:Prob1**}.}
    \label{fig:value_F}
  \end{subfigure}%
  \begin{subfigure}[T]{.5\textwidth} \centering
    \includegraphics[page=2]{frontiers}
    \caption{Value functions for Problems \eqref{eq:Prob2} and \eqref{eq:Prob2**}.}\label{fig:frontier}
  \end{subfigure}
  \begin{subfigure}[T]{.5\textwidth}
    \centering
    \includegraphics[page=3]{frontiers}
    \caption{Pareto and anti-Pareto frontier.}
  \end{subfigure}%
    \begin{subfigure}[T]{.5\textwidth}
      \centering
      \includegraphics[page=4]{frontiers}
      \caption{Pareto and anti-Pareto frontier under additional regularity
      Assumptions~\ref{hyp:cost}-\ref{hyp:loss**}.}\label{fig:reg_frontier}
  \end{subfigure}
  \caption{An example of the possible aspects of the feasible region $\FF$ (in light blue), the value
 functions $\lossinf$, $\lossup$, $\Cinf$, $\Csup$, and the Pareto and anti-Pareto frontier (in red) under
 Assumption~\ref{hyp:loss+cost}.}
  \label{fig:typical_frontier}
\end{figure}

The next two hypotheses on $C$ and $\loss$ will imply that the Pareto frontier is
connected.

\begin{hyp}\label{hyp:cost}
  If the cost $C$ has a local minimum (for the weak topology) at $\eta$, then $C(\eta)=0$
  and $\eta$ is a global minimum of $C$.
\end{hyp}

\begin{hyp}\label{hyp:loss}
  If the loss $\loss$ has a local minimum (for the weak topology) at $\eta$, then
  $\loss(\eta) = 0$ and $\eta$ is a global minimum of $\loss$.
\end{hyp}

Under these hypotheses, the picture becomes much nicer, see Figure~\ref{fig:reg_frontier},
where the only flat parts of the graphs of $\Cinf$ and $\lossinf$
occur at zero cost or zero loss. 

\begin{proposition}\label{prop:f_properties}
  Under Assumption \ref{hyp:loss+cost} and~\ref{hyp:cost} the following properties hold:
  \begin{propenum}
  \item\label{prop:c_star_decreases} The optimal cost $\Cinf$ is decreasing on $[0,
    \lossinf(0)]$.
  \item\label{prop:l_constraint_binding} If $\eta$ solves Problem~\eqref{eq:Prob2} for the
    loss $\ell\in[0,\lossinf(0)]$,
    then $\loss(\eta) = \ell$ (that is, the constraint is binding). Moreover $\eta$ is
    Pareto optimal, and:
    \begin{equation}\label{eq:LC=id}
      \lossinf(\Cinf(\ell)) = \ell.
    \end{equation}
  \item\label{prop:frontier=graph_c_star} The Pareto frontier is the graph of $\Cinf$:
    \begin{equation}\label{eq:FL=L*}
      \mathcal{F}_\loss = \{(\Cinf(\ell), \ell) \, \colon \, \ell \in [0,\lossinf(0)]\}.
    \end{equation}
  \end{propenum}
  Similarly, under Assumptions \ref{hyp:loss+cost} and~\ref{hyp:loss}, the following
  properties hold:
  \begin{propenum}[resume]
  \item\label{prop:l_star_decreases} The optimal loss $\lossinf$ is decreasing on $[0,
    \Cinf(0)]$.
  \item\label{prop:c_constraint_binding} If $\eta$ solves Problem~\eqref{eq:Prob1} for the
    cost $c\in[0,\Cinf(0)]$,
    then $C(\eta) =c$. Moreover $\eta$ is Pareto optimal, and $\Cinf(\lossinf(c)) = c$.
  \item\label{prop:frontier=graph_l_star} The Pareto frontier is the graph of $\lossinf$:
    \begin{equation}\label{eq:FL=C*} \mathcal{F}_\loss = \{(c, \lossinf(c)) \, \colon \, c
      \in [0,\Cinf(0)]\}.
    \end{equation}
  \end{propenum}

  Finally, if Assumptions \ref{hyp:cost} and \ref{hyp:loss} hold, then $\lossinf$ is a
  continuous decreasing bijection of $[0,\Cinf(0)]$ onto $[0, L^\star(0)]$ and $\Cinf$ is
  the inverse bijection, and the Pareto frontier is compact and connected.
\end{proposition}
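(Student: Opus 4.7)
The plan is to prove properties \ref{prop:c_star_decreases}--\ref{prop:frontier=graph_c_star} under Assumptions~\ref{hyp:loss+cost} and~\ref{hyp:cost}, then observe that \ref{prop:l_star_decreases}--\ref{prop:frontier=graph_l_star} follow by a symmetric argument (swapping the roles of $(C, \Cinf)$ and $(\loss, \lossinf)$), and finally combine both to obtain the concluding compactness/connectedness statement.

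For the strict monotonicity in \ref{prop:c_star_decreases}, I would argue by contradiction: suppose $\Cinf(\ell_1) = \Cinf(\ell_2) = c$ with $0 \leq \ell_1 < \ell_2 \leq \lossinf(0)$. The case $c = 0$ is ruled out directly by Proposition~\ref{thm:single-bi}~\ref{special_points}, since $C(\eta) = 0$ forces $\loss(\eta) \geq \lossinf(0) \geq \ell_2 > \ell_1$. For $c > 0$, I pick $\eta_1$ solving Problem~\eqref{eq:Prob2} at level $\ell_1$ and invoke Assumption~\ref{hyp:cost}: since $C(\eta_1) = c > 0$, the strategy $\eta_1$ cannot be a local minimum of $C$, so one can find a sequence $\eta_n \to \eta_1$ (weakly) with $C(\eta_n) < c$. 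Continuity of $\loss$ (Theorem~\ref{th:continuity-R} for $\loss = R_e$, Theorem~\ref{th:continuity-I} for $\loss = \I$) then yields $\loss(\eta_n) \to \loss(\eta_1) \leq \ell_1 < \ell_2$, so $\eta_n$ is admissible for Problem~\eqref{eq:Prob2} at level $\ell_2$ for $n$ large, contradicting $C(\eta_n) < c = \Cinf(\ell_2)$.

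Property~\ref{prop:l_constraint_binding} uses the same pattern. If $\eta$ solves~\eqref{eq:Prob2} at level $\ell \in [0, \lossinf(0)]$ but $\loss(\eta) < \ell$, weak continuity of $\loss$ produces a neighborhood of $\eta$ on which the loss constraint is strict, so every competitor in that neighborhood has cost $\geq C(\eta) = \Cinf(\ell)$; thus $\eta$ is a local minimum of $C$, and Assumption~\ref{hyp:cost} forces $C(\eta) = 0$. But then Proposition~\ref{thm:single-bi}~\ref{special_points} forces $\ell = \lossinf(0)$, and $C(\eta) = 0$ gives $\loss(\eta) \geq \lossinf(0) = \ell$, a contradiction. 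For the Pareto claim, by Proposition~\ref{thm:single-bi}~\ref{single-bi} I only need to check that $\eta$ also solves~\eqref{eq:Prob1} at cost $c = C(\eta)$: any competitor $\eta'$ with $C(\eta') \leq c$ and $\loss(\eta') < \ell$ would give $C(\eta') \geq \Cinf(\loss(\eta')) > \Cinf(\ell) = c$ by the strict monotonicity just proved, a contradiction. Equation~\eqref{eq:LC=id} follows by the same reasoning, and \ref{prop:frontier=graph_c_star} then drops out of Proposition~\ref{thm:single-bi}~\ref{intersection}.

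For the final statement, combining~\ref{prop:c_star_decreases} with its symmetric~\ref{prop:l_star_decreases} and the identities $\Cinf \circ \lossinf = \mathrm{id}$ on $[0,\Cinf(0)]$ and $\lossinf \circ \Cinf = \mathrm{id}$ on $[0,\lossinf(0)]$ (with boundary values $\Cinf(\lossinf(0)) = 0$ and $\lossinf(\Cinf(0)) = 0$ from Proposition~\ref{thm:single-bi}~\ref{special_points}) shows that $\Cinf$ and $\lossinf$ are mutually inverse decreasing bijections between the compact intervals $[0,\lossinf(0)]$ and $[0,\Cinf(0)]$; since any monotone bijection between compact real intervals is automatically continuous, both are continuous. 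The Pareto frontier, identified by~\ref{prop:frontier=graph_l_star} with the graph of the continuous function $\lossinf$ over the compact connected interval $[0,\Cinf(0)]$, is thus compact and connected. The main delicate point throughout is the strict monotonicity: weak continuity of $\loss$ alone does not suffice to produce cost-reducing perturbations, and the argument depends essentially on Assumption~\ref{hyp:cost} (absence of positive local minima of $C$) to generate them, with continuity of $\loss$ serving only to preserve admissibility along the approximating sequence.
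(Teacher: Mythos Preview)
Your argument is correct and follows the same route as the paper: exploit Assumption~\ref{hyp:cost} to show $\Cinf$ is strictly decreasing, then invoke Proposition~\ref{thm:single-bi} to identify the frontier, and finish by symmetry and the mutual-inverse argument. The only technical wrinkle is your use of a \emph{sequence} $\eta_n\to\eta_1$ with $C(\eta_n)<c$: since the weak topology on $\Delta$ is not assumed first-countable, it is cleaner (and this is exactly what the paper does) to take directly the open neighborhood $\mathcal{O}=\{\loss<\ell_2\}$ of $\eta_1$ and pick $\eta'\in\mathcal{O}$ with $C(\eta')<C(\eta_1)$, which exists precisely because $\eta_1$ is not a local minimum of $C$; note also that continuity of $\loss$ is already part of Assumption~\ref{hyp:loss+cost}, so the references to Theorems~\ref{th:continuity-R} and~\ref{th:continuity-I} are unnecessary, and that the paper derives the binding constraint in~\ref{prop:l_constraint_binding} more directly from~\ref{prop:c_star_decreases} via $C(\eta_\star)\geq \Cinf(\loss(\eta_\star))>\Cinf(\ell)=C(\eta_\star)$ rather than re-running the local-minimum argument.
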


\begin{proof}
  We prove \ref{prop:c_star_decreases}. Let $0\leq \ell<\ell' \leq \lossinf(0)$, and let
  $\eta_\star$ be a solution of Problem~\eqref{eq:Prob2}:
  \begin{equation}
    \label{eq:etaSolvesProb2}
    C(\eta_\star) = \Cinf(\ell) \quad\text{and}\quad \loss(\eta_\star)
    \leq \ell.
  \end{equation}
  The set $\mathcal{O} = \{\eta\, \colon\,  \loss(\eta) < \ell'\}$ is open and contains $\eta_\star$.
  Since $\loss(\eta_\star)<\lossinf(0)$, we get $C(\eta_\star)>0$, so $\eta_\star$ is not
  a global minimum for $C$. By Assumption~\ref{hyp:cost}, it cannot be a local minimum for
  $C$, so $\mathcal{O}$ contains at least one point $\eta'$ for which
  $C(\eta')<C(\eta_\star)$. Since $\eta'\in \mathcal{O}$, we get $\loss(\eta')\leq \ell'$,
  so that $\Cinf(\ell') \leq C(\eta') < C(\eta_\star)= \Cinf(\ell)$. Since $\ell<\ell'$
  are arbitrary, $\Cinf$ is decreasing on $[0,\lossinf(0)]$. \medskip

  We now prove~\ref{prop:l_constraint_binding}. If the inequality
  in~\eqref{eq:etaSolvesProb2} was strict, that is $\loss(\eta_\star)<\ell$, then we would
  get a contradiction as $C(\eta_\star) \geq \Cinf(\loss(\eta_\star)) > \Cinf(\ell) =
  C(\eta_\star)$. Therefore any solution $\eta_\star$ of \eqref{eq:Prob2} satisfies
  $\loss(\eta_\star) = \ell$, and in particular $\Cinf(\loss(\eta_\star))=\Cinf(\ell) =
  C(\eta_\star)$. This implies in turn that $\eta_\star$ also solves~\eqref{eq:Prob1}: if
  $\eta$ satisfies $\loss(\eta)<\loss(\eta_\star)$, then using the definition of $\Cinf$,
  the fact that it decreases, and the definition of $\eta_\star$, we get:
  \[
    C(\eta) \geq \Cinf(\loss(\eta)) > \Cinf(\loss(\eta_\star)) = C(\eta_\star).
  \]
  By contraposition, we have $\loss(\eta)\geq \loss(\eta_\star)$ for any $\eta$ such that
  $C(\eta)\leq C(\eta_\star)$, proving that $\eta_\star$ is also a solution
  of~\eqref{eq:Prob1} with $c=C(\eta_*)$. By Point~\ref{single-bi} of
  Proposition~\ref{thm:single-bi}, $\eta_\star$ is Pareto optimal. Therefore
  $(C(\eta_\star),\loss(\eta_\star)) = (\Cinf(\ell),\ell)$ belongs to the Pareto frontier.
  Using Point~\ref{intersection} of Proposition~\ref{thm:single-bi}, we deduce that
  $\ell=\lossinf(\Cinf(\ell))$. \medskip

  To prove Point~\ref{prop:frontier=graph_c_star}, note that Equation \eqref{eq:LC=id}
  shows that, if $c=\Cinf(\ell)$ for $\ell\in [0, \lossinf(0)]$, then $\ell=\lossinf(c)$.
  Use Point~\ref{intersection} and
  \ref{special_points} of Proposition~\ref{thm:single-bi}, to get that $\mathcal{F}_\loss
  = \{ (c,\ell)\,\colon\, c=\Cinf(\ell), \, \ell \in [0, \lossinf(0)]\}$. \medskip

  The claims \ref{prop:l_star_decreases}, \ref{prop:c_constraint_binding} and
  \ref{prop:frontier=graph_l_star} are proved in the same way, exchanging the roles of
  $\loss$ and~$C$.

  \medskip

  To conclude the proof, it remains to check that $\Cinf$ and $\lossinf$ are continuous
  under Assumptions \ref{hyp:loss+cost}, \ref{hyp:cost} and \ref{hyp:loss}. We deduce from
  Point~\ref{prop:l_constraint_binding} and Proposition~\ref{prop:main-result} that $[0,
  \lossinf(0)]$ is in the range of $\lossinf$. Since $\lossinf $ is decreasing, thanks to
  Point~\ref{prop:l_star_decreases} and $\lossinf(\Cinf(0))=0$, see
  Proposition~\ref{thm:single-bi}~\ref{special_points}, we get that $\lossinf$ is
  continuous and decreasing on $[0, \lossinf(0)]$, and thus one-to-one from $[0,\Cinf(0)]$
  onto $[0, \loss^\star(0)]$. Then use \eqref{eq:LC=id} to get that $\Cinf$ is its inverse
  bijection. The continuity of $\lossinf$ and \eqref{eq:FL=C*} implies that $
  \mathcal{F}_\loss$ is compact and connected.
\end{proof}

Finally, let us check that Assumptions~\ref{hyp:cost} and~\ref{hyp:loss} hold under very
simple assumptions, which are in particular satisfied by the cost functions $\costu$ and
$\costa$ and the loss functions $R_e$ and $\I$ (recall from Propositions \ref{prop:R_e}
and \ref{prop:I} that $R_e$ and $\I$ are sub-homogeneous).

\begin{lemma}\label{lem:c-dec+L-hom}
  Suppose Assumption \ref{hyp:loss+cost} holds. If the cost function $C$ is decreasing,
  then Assumption \ref{hyp:cost} holds and $\lossinf (0)=\maxloss$. If the loss function
  $\loss$ is sub-homogeneous, then Assumption \ref{hyp:loss} holds.
\end{lemma}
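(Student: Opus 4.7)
The plan is to prove each implication by contrapositive: assuming $\eta$ is a weakly local minimum of $C$ (respectively $\loss$), I exhibit a one-parameter perturbation of $\eta$ inside $\Delta$ that converges weakly back to $\eta$ while strictly improving the functional, forcing $\eta$ to satisfy the claimed global characterization.

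For the cost part, assume $C$ is decreasing and let $\eta$ be a weakly local minimum. The natural perturbation is the convex combination $\eta_\lambda = (1-\lambda)\eta + \lambda\un \in \Delta$, which satisfies $\eta_\lambda \geq \eta$ and converges to $\eta$ in $L^1$ (hence weakly) as $\lambda \to 0^+$. If $\eta \neq \un$ $\mu$-almost surely, then $\int_\Omega (1-\eta)\, \mathrm{d}\mu > 0$, so $\int \eta_\lambda \, \mathrm{d}\mu > \int \eta \, \mathrm{d}\mu$ for every $\lambda > 0$; the decreasing property of $C$ then yields $C(\eta_\lambda) < C(\eta)$ for every $\lambda > 0$, contradicting local minimality. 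Hence $\eta = \un$ a.s., and since $C$ is non-increasing with $C(\un)=0$ we have $C \geq 0$ on $\Delta$ and $C(\eta) = 0 = \min_\Delta C$, so $\eta$ is a global minimum and Assumption~\ref{hyp:cost} holds. The identity $\lossinf(0) = \maxloss$ then follows: $C \geq 0$ reduces the constraint $C(\eta)\leq 0$ to $C(\eta)=0$, and the same decreasing argument forces this set to consist of the single point $\un$ (modulo a.s.\ equality), so $\lossinf(0) = \loss(\un)$, which equals $\maxloss$ by monotonicity of $\loss$ together with the fact that $\eta \leq \un$ for every $\eta \in \Delta$.

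For the loss part, assume $\loss$ is sub-homogeneous and let $\eta$ be a weakly local minimum. The natural perturbation here is the scalar rescaling $\eta_\lambda = \lambda \eta \in \Delta$ with $\lambda \to 1^-$, which converges to $\eta$ in $L^\infty$ norm and hence weakly. Sub-homogeneity gives $\loss(\eta_\lambda) \leq \lambda \loss(\eta)$, and if $\loss(\eta) > 0$ this forces $\loss(\eta_\lambda) < \loss(\eta)$ for every $\lambda \in [0,1)$, contradicting local minimality. Hence $\loss(\eta) = 0 = \loss(0) = \min_\Delta \loss$ (using that $\loss$ is non-decreasing with $\loss(0) = 0$), so $\eta$ is a global minimum and Assumption~\ref{hyp:loss} holds.

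There is no serious obstacle; the only conceptual choice is picking the right perturbation in each case. For the cost, the convex combination with $\un$ moves $\eta$ upwards while remaining in $\Delta$, activating the decreasing hypothesis; for the loss, the scalar rescaling directly activates sub-homogeneity. Since both perturbations converge in norm, the passage to weak convergence is immediate and no explicit manipulation of basic weak-topology neighborhoods is required.
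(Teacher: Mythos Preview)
Your proof is correct and follows essentially the same approach as the paper: the same perturbations $\eta+\lambda(1-\eta)$ for the cost and $\lambda\eta$ for the loss, with the same conclusions drawn from the decreasing and sub-homogeneous hypotheses respectively. Your derivation of $\lossinf(0)=\maxloss$ is a bit more explicit than the paper's, but the underlying argument is identical.
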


\begin{proof}
  Let $\eta\in\Delta$. If $C$ has a local minimum at~$\eta$, then, as
  $C$ is non-increasing,  for $\varepsilon>0$
  small enough, we get that $C(\eta) \geq C(\eta+\varepsilon(1-\eta)) \geq C(\eta)$. If $C$ is
  decreasing, this is only possible if $\eta = 1$ almost surely, so that $\eta$ is a
  global minimum of $C$. This also gives $\lossinf(0)=\maxloss$. Similarly if $\loss$ has
  a local minimum at $\eta$, then for $\varepsilon>0$ small enough $\loss(\eta) \leq
  \loss((1-\varepsilon) \eta) \leq (1-\varepsilon) \loss(\eta)$, so $\loss(\eta) = 0$ and
  $\eta$ is a global minimum of $\loss$.
\end{proof}

\begin{corollary}\label{cor:P=K}
  Suppose that Assumptions \ref{hyp:loss+cost}, \ref{hyp:cost} and \ref{hyp:loss} hold.
  The set of Pareto optimal strategies $\mathcal{P}_\loss$ is compact (for the weak
  topology).
\end{corollary}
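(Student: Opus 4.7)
The plan is to realize $\mathcal{P}_\loss$ as the preimage of the Pareto frontier under the continuous outcome map, and then to conclude by the compactness of $\Delta$ and of $\mathcal{F}_\loss$.

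First, I would introduce the outcome map $\Phi \colon \Delta \to \R^2$ defined by $\Phi(\eta) = (C(\eta), \loss(\eta))$. Under Assumption~\ref{hyp:loss+cost}, both $C$ and $\loss$ are continuous on $\Delta$ endowed with the weak topology, so $\Phi$ is continuous.

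Next, I would establish the key identity $\mathcal{P}_\loss = \Phi^{-1}(\mathcal{F}_\loss)$. The inclusion $\mathcal{P}_\loss \subseteq \Phi^{-1}(\mathcal{F}_\loss)$ is immediate from the definition of $\mathcal{F}_\loss$. For the converse, take $\eta\in \Phi^{-1}(\mathcal{F}_\loss)$. By Point~\ref{intersection} of Proposition~\ref{thm:single-bi}, the condition $(C(\eta),\loss(\eta)) \in \mathcal{F}_\loss$ translates into $C(\eta) = \Cinf(\loss(\eta))$ and $\loss(\eta) = \lossinf(C(\eta))$, so that $\eta$ is simultaneously a solution of Problem~\eqref{eq:Prob2} with $\ell = \loss(\eta)$ and of Problem~\eqref{eq:Prob1} with $c = C(\eta)$. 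The converse direction of Point~\ref{single-bi} of the same proposition then yields that $\eta$ is Pareto optimal.

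Finally, I would conclude by invoking the last part of Proposition~\ref{prop:f_properties}, which, under Assumptions~\ref{hyp:cost} and~\ref{hyp:loss}, asserts that $\mathcal{F}_\loss = \{(\Cinf(\ell), \ell) \colon \ell\in [0,\lossinf(0)]\}$ is compact (it is the graph of the continuous bijection $\Cinf$ over a compact interval). Since $\Phi$ is continuous, the preimage $\mathcal{P}_\loss = \Phi^{-1}(\mathcal{F}_\loss)$ is weakly closed in $\Delta$, and since $\Delta$ is itself weakly compact by Lemma~\ref{lem:D-compact}~\ref{item:D-compact}, the set $\mathcal{P}_\loss$ is weakly compact.

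There is no genuine obstacle here; the argument is just a careful stitching together of the three earlier results. The only point deserving attention is the verification of the reverse inclusion $\Phi^{-1}(\mathcal{F}_\loss) \subseteq \mathcal{P}_\loss$, which must be derived from Proposition~\ref{thm:single-bi} rather than from the (stronger) bijection properties of $\Cinf$ and $\lossinf$, since otherwise one might suspect more than the assumptions provide.
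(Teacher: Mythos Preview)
Your proof is correct and follows essentially the same approach as the paper's: the paper also writes $\mathcal{P}_\loss = f^{-1}(\mathcal{F}_\loss)$ for $f = (C,\loss)$, notes that $\mathcal{F}_\loss$ is compact by Proposition~\ref{prop:f_properties}, and concludes by continuity of $f$ and compactness of $\Delta$. The only difference is that you spell out the reverse inclusion $\Phi^{-1}(\mathcal{F}_\loss)\subseteq \mathcal{P}_\loss$ via Proposition~\ref{thm:single-bi}, whereas the paper simply asserts the identity.
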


\begin{proof}
  Since $\lossinf$ is continuous thanks to Proposition~\ref{prop:f_properties}, we deduce
  that $\mathcal{F}_\loss$, which is given by~\eqref{eq:FL=C*}, is compact and thus
  closed. Since $\mathcal{P}_\loss=f^{-1}(\mathcal{F}_\loss)$, where the function
  $f=(C,\loss)$ defined on $\Delta$ is continuous, we deduce that $\mathcal{P}_\loss$ is
  closed and thus compact as $\Delta$ is compact.
\end{proof}

\subsection{On the anti-Pareto frontier}\label{sec:Pareto-AF}

\subsubsection{The general setting}
Letting $C'(\eta) = \maxloss - \loss(\eta)$ and $\loss'(\eta) = \maxcost -
C(\eta)$, it is easy to see that:
\[
 \Cinf'(c) =\maxloss - \lossup(\maxcost-c)
 \quad\text{and}\quad
 \lossinf'(\ell) = \maxcost - \Csup(\maxloss - \ell),
\]
so that Proposition~\ref{prop:f_properties} may be applied to the cost function
$C'$ and the loss function $\loss'$ to yield
the following result.

\begin{proposition}[Single-objective and bi-objective problems for the anti-Pareto
  strategies]\label{thm:single-bi-anti}
 Suppose Assumption \ref{hyp:loss+cost} holds.
 \begin{propenum}
 \item\label{single-bi-anti}
 If $\eta ^\star$ is anti-Pareto optimal, then $\eta^\star$ is a solution of~\eqref{eq:Prob1**}
 for the cost $c=C(\eta^\star)$, and a solution of~\eqref{eq:Prob2**} for the loss
 $\ell=\loss(\eta^\star)$.
 Conversely, if $\eta^\star$ is a solution to both problems
 \eqref{eq:Prob1**}
 and \eqref{eq:Prob2**} for some values $c$ and $\ell$,
 then $\eta^\star$ is anti-Pareto optimal.
\item\label{intersection-anti}
 The anti-Pareto frontier is the intersection of the graphs of $\Csup$
 and $\lossup$:
 \[
 \mathcal{F}_\loss^\mathrm{Anti} = \{ (c,\ell)\in [0, \maxcost]\times [0,
 \maxloss]\, \colon\, c=\Csup(\ell) \text{ and } \ell =
 \lossup(c)\}.
 \]

 \item\label{special_points-anti}
 The points $(\Csup(\maxloss),\maxloss)$ and $(\maxcost,\lossup(\maxcost))$ both belong
 to the anti-Pareto frontier, and we have
 $\Csup(\lossup(\maxcost)) =\maxcost$ and $\lossup(\Csup(\maxloss)) =
 \maxloss$.
 Moreover, we also have $\Csup(\ell) = \maxcost$ for $\ell\in
 [0, \lossup(\maxcost)]$,
 and $\lossup(c) = \maxloss$ for $c\in [0,\Csup(\maxloss)]$.
 \end{propenum}
\end{proposition}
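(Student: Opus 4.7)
The plan is to deduce Proposition~\ref{thm:single-bi-anti} as a direct corollary of Proposition~\ref{thm:single-bi} applied to the pair $(C', \loss') := (\maxloss - \loss, \maxcost - C)$ already introduced in the preamble.

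First I would check that $(C', \loss')$ satisfies Assumption~\ref{hyp:loss+cost}. Since $\loss$ is non-decreasing, continuous and non-negative with $\loss(0) = 0$, the function $C'$ is non-increasing, continuous and non-negative; moreover $\loss(\un) = \maxloss$ (the non-decreasing function $\loss$ attains its maximum at the top element $\un$) yields $C'(\un) = 0$. Symmetrically, $C(0) = \maxcost$ gives $\loss'(0) = 0$, and $\loss'$ is non-decreasing, continuous and non-negative, with $\max C' = \maxloss > 0$ and $\max \loss' = \maxcost > 0$.

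Next I would translate the objects. Since $x \mapsto \maxloss - x$ and $x \mapsto \maxcost - x$ are strictly decreasing bijections, direct substitution in~\eqref{eq:defParetoOptimal} shows that $\eta^\star$ is Pareto optimal for $(C', \loss')$ if and only if it is anti-Pareto optimal for $(C, \loss)$. The constraint $C'(\eta) \leq c$ rewrites as $\loss(\eta) \geq \maxloss - c$, and minimizing $\loss' = \maxcost - C$ amounts to maximizing $C$; consequently, Problem~\eqref{eq:Prob1} for $(C', \loss')$ at cost $c$ has the same set of minimizers as Problem~\eqref{eq:Prob2**} for $(C, \loss)$ at loss $\ell = \maxloss - c$, and the value functions are related by $\lossinf'(c) = \maxcost - \Csup(\maxloss - c)$, as noted in the preamble. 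Symmetrically, Problem~\eqref{eq:Prob2} for $(C', \loss')$ at $\ell$ corresponds to Problem~\eqref{eq:Prob1**} for $(C, \loss)$ at $c = \maxcost - \ell$, and $\Cinf'(\ell) = \maxloss - \lossup(\maxcost - \ell)$.

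Finally I would apply Proposition~\ref{thm:single-bi} to $(C', \loss')$ and transport everything through the bijection $(c', \ell') \mapsto (c, \ell) := (\maxcost - \ell', \maxloss - c')$, which sends the Pareto frontier $\mathcal{F}_{\loss'}$ bijectively onto $\mathcal{F}_\loss^{\mathrm{Anti}}$. Point~\ref{single-bi-anti} follows immediately from Proposition~\ref{thm:single-bi}~\ref{single-bi} via the correspondence of problems. For point~\ref{intersection-anti}, rewriting $\mathcal{F}_{\loss'} = \{(c', \ell')\,\colon\, c' = \Cinf'(\ell'),\ \ell' = \lossinf'(c')\}$ through the substitutions $c = \maxcost - \ell'$, $\ell = \maxloss - c'$ together with the identities for $\Cinf'$ and $\lossinf'$ yields $c = \Csup(\ell)$ and $\ell = \lossup(c)$. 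For point~\ref{special_points-anti}, the Pareto extremal points $(0, \lossinf'(0))$ and $(\Cinf'(0), 0)$ from Proposition~\ref{thm:single-bi}~\ref{special_points} map respectively to $(\Csup(\maxloss), \maxloss)$ and $(\maxcost, \lossup(\maxcost))$; the identities $\Cinf'(\lossinf'(0)) = 0$ and $\lossinf'(\Cinf'(0)) = 0$ translate into $\lossup(\Csup(\maxloss)) = \maxloss$ and $\Csup(\lossup(\maxcost)) = \maxcost$; and the flat portions $\Cinf'(\ell') = 0$ on $[\lossinf'(0), \maxcost]$ and $\lossinf'(c') = 0$ on $[\Cinf'(0), \maxloss]$ translate into $\lossup \equiv \maxloss$ on $[0, \Csup(\maxloss)]$ and $\Csup \equiv \maxcost$ on $[0, \lossup(\maxcost)]$ respectively. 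There is no deep obstacle here; the only care to be taken is to keep track of the two sign-reversing substitutions consistently.
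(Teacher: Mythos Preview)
Your proposal is correct and follows essentially the same approach as the paper: introduce the swapped pair $(C',\loss') = (\maxloss - \loss,\, \maxcost - C)$, observe the value-function identities, and transport the Pareto results. In fact you are slightly more precise than the paper's preamble, which cites Proposition~\ref{prop:f_properties} whereas the content of Proposition~\ref{thm:single-bi-anti} (requiring only Assumption~\ref{hyp:loss+cost}) is really the image of Proposition~\ref{thm:single-bi} under this substitution, exactly as you write.
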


The following additional hypotheses rule out the occurrence of flat
parts in the anti-Pareto frontier.
\begin{hyp}
 \label{hyp:cost**}
 If the cost $C$ has a local maximum at $\eta$ (for the weak
 topology), then $C(\eta)=\maxcost$ and $\eta$ is a global maximum of
 $C$.
\end{hyp}

\begin{hyp}
 \label{hyp:loss**}
 If the loss $\loss$ has a local maximum at $\eta$ (for the weak
 topology), then $\loss(\eta) = \maxloss$ and $\eta$ is a global
 maximum of $\loss$.
\end{hyp}

The following result is now a consequence of
Proposition~\ref{prop:f_properties} and Corollary~\ref{cor:P=K}
applied to the loss function $\loss'$ and cost function $C'$.

\begin{proposition}\label{prop:f_properties**}
  Under Assumption \ref{hyp:loss+cost} and~\ref{hyp:cost**} the following properties hold:
  \begin{propenum}
  \item\label{prop:c_star_decreases**} The optimal cost $\Csup$ is decreasing on
    $[\Csup(\maxloss), \maxcost]$.
  \item\label{prop:l_constraint_binding**} If $\eta$ solves Problem~\eqref{eq:Prob2**} for
    the loss $\ell\in[\lossup(\maxcost),\maxloss]$, then $\loss(\eta) = \ell$ (that is,
    the constraint is binding). Moreover $\eta$ is anti-Pareto optimal, and
    $\lossup(\Csup(\ell)) = \ell$.
  \item\label{prop:frontier=graph_c_star**} The anti-Pareto frontier is the graph of
    $\Csup$:
    \begin{equation}\label{eq:FL=L**} \mathcal{F}_\loss^{\mathrm{Anti}} = \{(\Csup(\ell),
	\ell) \, \colon \, \ell \in
      [\lossup(\maxcost),\maxloss]\}.
    \end{equation}
  \end{propenum}
  Similarly, under Assumptions \ref{hyp:loss+cost} and~\ref{hyp:loss**}, the following
  properties hold:
  \begin{propenum}[resume]
  \item\label{prop:l_star_decreases**} The optimal loss $\lossup$ is decreasing on
    $[\lossup(\maxcost),\maxloss]$.
  \item\label{prop:c_constraint_binding**} If $\eta$ solves Problem~\eqref{eq:Prob1**} for
    the cost $c\in[\Csup(\maxloss),\maxcost]$, then $C(\eta) =c$. Moreover $\eta$ is
    anti-Pareto optimal, and $\Csup(\lossup(c)) = c$.
  \item\label{prop:frontier=graph_l_star**} The anti-Pareto frontier is the graph of
    $\lossup$:
    \begin{equation}\label{eq:FL=C**} \mathcal{F}_\loss^{\mathrm{Anti}} = \{(c,
      \lossup(c)) \, \colon \, c \in [\Csup(\maxloss),\maxcost]\}.
    \end{equation}
  \end{propenum}

  Finally, if Assumptions \ref{hyp:loss+cost}, \ref{hyp:cost**} and \ref{hyp:loss**} hold,
  then $\lossup$ is a continuous decreasing bijection of $[\Csup(\maxloss),\maxcost]$ onto
  $ [\lossup(\maxcost),\maxloss]$, $\Csup$ is the inverse bijection, and the anti-Pareto
  frontier is compact and connected. Furthermore,  the set of
  anti-Pareto optimal strategies $\mathcal{P}_\loss^{\mathrm{Anti}}$ is compact (for the
  weak 
  topology).
\end{proposition}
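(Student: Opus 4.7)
The plan is to reduce everything to Proposition \ref{prop:f_properties} and Corollary \ref{cor:P=K} via the duality transformation suggested in the paragraph preceding Proposition \ref{thm:single-bi-anti}: set
\[
C'(\eta) = \maxloss - \loss(\eta), \qquad \loss'(\eta) = \maxcost - C(\eta),
\]
and view the maximization problem for $(C,\loss)$ as the minimization problem for $(C',\loss')$. The first step is to check that $(C',\loss')$ satisfies Assumption \ref{hyp:loss+cost}: $C'$ is continuous and non-increasing with $C'(\un) = 0$ and $\max_\Delta C' = \maxloss$, and symmetrically for $\loss'$. Next, I would verify that a strategy is anti-Pareto optimal for $(C,\loss)$ if and only if it is Pareto optimal for $(C',\loss')$, which is immediate from rewriting the two strict implications in \eqref{eq:defParetoOptimal}.

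A direct computation from the definitions of the value functions then yields
\[
\Cinf'(c) = \maxloss - \lossup(\maxcost - c) \quad\text{and}\quad \lossinf'(\ell) = \maxcost - \Csup(\maxloss - \ell),
\]
so every statement about monotonicity, continuity, range, or bijectivity of $\Cinf'$ and $\lossinf'$ translates, after the affine change of variable, into the corresponding statement about $\lossup$ and $\Csup$. The key correspondence between assumptions is that a local minimum of $C'$ at $\eta$ is precisely a local maximum of $\loss$ at $\eta$, with $C'(\eta)=0 \iff \loss(\eta)=\maxloss$; hence Assumption \ref{hyp:cost} applied to $C'$ is exactly Assumption \ref{hyp:loss**} applied to $\loss$, and symmetrically Assumption \ref{hyp:loss} applied to $\loss'$ is exactly Assumption \ref{hyp:cost**} applied to $C$.

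Consequently, Proposition \ref{prop:f_properties} applied to $(C',\loss')$ under Assumption \ref{hyp:loss**} on $\loss$ produces Points \ref{prop:l_star_decreases**}--\ref{prop:frontier=graph_l_star**} (the statements about $\lossup$, which come from the statements about $\Cinf'$), and applied under Assumption \ref{hyp:cost**} on $C$ it produces Points \ref{prop:c_star_decreases**}--\ref{prop:frontier=graph_c_star**} (the statements about $\Csup$, coming from $\lossinf'$). The final ``both assumptions hold'' conclusion (continuous decreasing bijection, compact and connected anti-Pareto frontier, and compactness of $\mathcal{P}_\loss^{\mathrm{Anti}}$) then follows by combining the two, together with Corollary \ref{cor:P=K} applied to $(C',\loss')$.

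The main (and rather mild) obstacle I anticipate is bookkeeping: one must ensure that the affine change of variable maps the intervals $[0,\lossinf'(0)]$ and $[0,\Cinf'(0)]$ appearing in Proposition \ref{prop:f_properties} onto the intervals $[\Csup(\maxloss),\maxcost]$ and $[\lossup(\maxcost),\maxloss]$ appearing in the statement. This reduces to checking the identities $\lossinf'(0) = \maxcost - \Csup(\maxloss)$ and $\Cinf'(0) = \maxloss - \lossup(\maxcost)$, both immediate from the definitions and from Proposition \ref{thm:single-bi-anti}~\ref{special_points-anti}; no further analytic content beyond Proposition \ref{prop:f_properties} is needed.
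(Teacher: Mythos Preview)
Your proposal is correct and follows exactly the approach sketched in the paper: the paper's entire proof is the sentence ``The following result is now a consequence of Proposition~\ref{prop:f_properties} and Corollary~\ref{cor:P=K} applied to the loss function $\loss'$ and cost function $C'$,'' together with the duality formulas for $\Cinf'$ and $\lossinf'$ displayed just before Proposition~\ref{thm:single-bi-anti}. Your write-up simply unpacks this in more detail, correctly matching Assumption~\ref{hyp:cost} for $C'$ with Assumption~\ref{hyp:loss**} for $\loss$ and Assumption~\ref{hyp:loss} for $\loss'$ with Assumption~\ref{hyp:cost**} for $C$, and tracking the affine change of variable on the intervals.
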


The following result is similar to the first part of Lemma \ref{lem:c-dec+L-hom}.

\begin{lemma}\label{lem:c-dec+L-hom**}
  Suppose Assumption \ref{hyp:loss+cost} holds. If the cost function $C$ is decreasing,
  then Assumption \ref{hyp:cost**} holds and $\lossup (\maxcost)=0$.
\end{lemma}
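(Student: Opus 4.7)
The plan is to mirror closely the proof of Lemma~\ref{lem:c-dec+L-hom}: the key point is that the one-parameter perturbation $\eta \mapsto (1-\varepsilon)\eta$ lies in every weak neighborhood of $\eta$ for small $\varepsilon>0$ (since norm convergence implies weak convergence), and it is dominated by $\eta$ with strictly smaller mean whenever $\int_\Omega \eta\,\mathrm{d}\mu>0$. Combining this with the decreasing property of $C$ will force any candidate local maximum to be the zero function.

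More precisely, I would first establish the identity $\lossup(\maxcost)=0$. Since $C$ is non-increasing, $\maxcost = C(0)$. Take any $\eta \in \Delta$ with $C(\eta)\geq \maxcost$; then $C(\eta)=C(0)$. Because $0 \leq \eta$, if $\int_\Omega \eta\,\mathrm{d}\mu>0$ then the decreasing property of $C$ would give $C(0)>C(\eta)$, a contradiction. Hence $\eta=0$ $\mu$-almost surely, and $\loss(\eta)=\loss(0)=0$ by Assumption~\ref{hyp:loss+cost}. Taking the supremum over such $\eta$ yields $\lossup(\maxcost)=0$.

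Next I would verify Assumption~\ref{hyp:cost**}. Suppose $C$ has a local maximum at some $\eta \in \Delta$ for the weak topology. The sequence $((1-1/n)\eta,\,n\in\N^*)$ converges to $\eta$ in $L^p$ (hence weakly), so for $n$ large enough $(1-1/n)\eta$ lies in a prescribed weak neighborhood of $\eta$ on which $C$ is bounded above by $C(\eta)$. On the other hand, $(1-1/n)\eta \leq \eta$ and $C$ is non-increasing, so $C((1-1/n)\eta)\geq C(\eta)$, whence equality. If $\int_\Omega \eta\,\mathrm{d}\mu>0$, then $\int_\Omega (1-1/n)\eta\,\mathrm{d}\mu < \int_\Omega \eta\,\mathrm{d}\mu$ and the decreasing property of $C$ would yield the strict inequality $C((1-1/n)\eta)>C(\eta)$, contradicting the equality just obtained. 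Therefore $\eta=0$ $\mu$-almost surely, which gives $C(\eta)=C(0)=\maxcost$, and $\eta$ is a global maximum of $C$, as required.

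I do not foresee any real obstacle: the argument is essentially a sign-reversal of the first half of Lemma~\ref{lem:c-dec+L-hom}. The only minor point to be careful about is that the perturbation $(1-\varepsilon)\eta$ (rather than $\eta+\varepsilon(\un-\eta)$ used in Lemma~\ref{lem:c-dec+L-hom}) is the natural one here, since we want a perturbation that stays below $\eta$ so that we can apply the decreasing property of $C$ with a strict inequality on the integrals, and this perturbation remains in $\Delta$ and converges weakly to $\eta$.
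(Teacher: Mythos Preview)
Your proposal is correct and follows essentially the same approach as the paper: both use the perturbation $(1-\varepsilon)\eta$, observe that it converges weakly to $\eta$ and is dominated by $\eta$ with strictly smaller mean when $\eta\neq 0$ a.s., and then invoke the decreasing property of $C$ to rule out any local maximum other than $\eta=0$. The paper's proof is simply a more compressed version of what you wrote.
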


\begin{proof}
  Let $\eta\in\Delta$ and $\varepsilon\in (0,1)$. Since $C$ is decreasing,
  $C((1-\varepsilon)\eta) \geq C(\eta)$, with equality if and only if $\eta=0$
  $\mu$-almost surely. Therefore the only local maximum of $C$ is $\eta=0$, and it is a
  global maximum. Since $C(\eta)=\maxcost$ implies that $\eta=0$ $\mu$-almost surely, we
  also get that $\lossup(\maxcost)=\loss(0)=0$.
\end{proof}

\subsubsection{The particular case of the kernel and SIS models}

We show that, under an irreducibility hypothesis on the kernel,
Assumption~\ref{hyp:loss**} holds for the loss functions $R_e$ and $\mathfrak{J}$. The
reducible case is more delicate and it is studied in more details in~\cite{ddz-Re} for the
loss function $\loss=R_e$; in particular Assumption~\ref{hyp:loss**} may not hold and the
anti-Pareto frontier may not be connected.

Let us recall some notation. Let $\kk$ be a kernel with finite double norm. For $A, B\in
\cf$, we write $A\subset B$ a.s.\ if $\mu(B \cap A^c)=0$ and $A=B$ a.s.\ if  $A\subset B$
a.s.\ and $B \subset A$ a.s. For $A, B\in \cf$, $x\in \Omega$ and a kernel
$\kk$, we simply write $\kk(x,A)=\int_{ A} \kk(x,y)\, \mu(\rd y)$, $ \kk(B,x)=\int_{ B}
\kk(z,x)\, \mu(\rd z)$ and:
\[
  \kk(B, A)=
  \int_{B \times A} \kk(z,y)\, \mu(\rd z) \mu(\rd y).
\]
A set $A \subset \cf$ is \emph{$\kk$-invariant} if $\kk(A^c, A)=0$. (Notice that if $A$ is
$\kk$-invariant, then $L^p(A,\mu)$ is an invariant closed subspace for $T_\kk$, seen as an
operator on $L^p(\Omega, \mu)$.) A kernel $\kk$ is \emph{irreducible} (or connected) if
any $\kk$-invariant set $A$ is such that a.s. $A=\emptyset$ or a.s.  $A=\Omega$. Define
$\{\kk \equiv 0\}$ as $\{x\in \Omega\, \colon\, \kk(x, \Omega) + \kk( \Omega, x)=0\}$, so
that $\kk(A,\Omega)+ \kk(\Omega, A)=0$ implies that a.s.\ $A\subset\{\kk \equiv 0\}$. A
kernel $\kk$ is \emph{quasi-irreducible} if the restriction of $\kk$ to $\{\kk \equiv
  0\}^c$ is irreducible, that is if any $\kk$-invariant set $A$ is such that $A\subset \{\kk
\equiv 0\}$ a.s.\ or  $A^c \subset \{\kk \equiv 0\}$ a.s. Notice the definition of the
quasi-irreducibility from  \cite[Definition~2.11]{bjr} is slightly stronger as it uses
a topology on $\Omega$.

\begin{lemma}\label{lem:L**}
  Consider the kernel model $\param=[(\Omega, \cf, \mu), \kk]$ under
  Assumption~\ref{hyp:k}. If $\kk$ is quasi-irreducible, then Assumption
  \ref{hyp:loss**} holds for $\loss=R_e[\kk]$, and $\Csup (\maxloss)=C(\ind{\{\kk\equiv
  0\}^c})$ (which is 0 if $\kk$ is irreducible).
\end{lemma}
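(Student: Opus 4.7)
\emph{Plan.} The core task is to verify Assumption~\ref{hyp:loss**} for the loss $\loss = R_e[\kk]$; the formula for $\Csup(\maxloss)$ then follows from simple bookkeeping. Write $\Omega_0 := \{\kk \equiv 0\}^c$. Since $\kk$ vanishes $\mu\otimes\mu$-a.e.\ outside $\Omega_0 \times \Omega_0$ (directly from the definition of $\{\kk\equiv 0\}$), the value $R_e(\eta) = \rho(T_{\kk\eta})$ depends only on $\eta|_{\Omega_0}$; in particular $R_e(\un) = R_e(\ind{\Omega_0}) = R_0 = \maxloss$. Given any $\eta_\star \in \Delta$ which is a local maximum of $R_e$ in the weak topology, I will prove that $\eta_\star = 1$ $\mu$-a.e.\ on $\Omega_0$: this immediately yields $R_e(\eta_\star) = R_0 = \maxloss$ and shows $\eta_\star$ is a global maximum.

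\emph{Step 1: a constant-value interval.} Introduce the homotopy $\eta_s := (1-s)\eta_\star + s\un \in \Delta$ for $s\in[0,1]$. It is continuous in sup-norm (hence weakly) and pointwise non-decreasing in $s$, so the monotonicity of $R_e$ (Proposition~\ref{prop:R_e}\ref{prop:increase}) yields that $s \mapsto R_e(\eta_s)$ is non-decreasing. Since $\eta_\star$ is a weak local maximum of $R_e$, there exists $s_0 \in (0,1]$ with $R_e(\eta_s) \leq R_e(\eta_\star)$ for $s \in [0, s_0]$, and combined with monotonicity we obtain $R_e(\eta_s) = R_e(\eta_\star)$ on the entire interval $[0, s_0]$.

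\emph{Step 2: strict monotonicity via Krein--Rutman.} The plan is to show that $s \mapsto R_e(\eta_s)$ is in fact \emph{strictly} increasing on $(0, s_0]$ whenever $A := \Omega_0 \cap \{\eta_\star < 1\}$ has positive $\mu$-measure, contradicting Step~1. For any $s > 0$, the bound $\eta_s \geq s$ on $\Omega_0$ combined with the pointwise inequality $(T_{\kk\eta_s})^n f \geq s^n T_\kk^n f$ (for $f \geq 0$) implies that $T_{\kk\eta_s}|_{L^p(\Omega_0)}$ is irreducible, since $T_\kk|_{L^p(\Omega_0)}$ is by quasi-irreducibility of $\kk$. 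For $0 < s_1 < s_2 \leq s_0$, Krein--Rutman then gives positive eigenfunctions $v, u > 0$ $\mu$-a.e.\ on $\Omega_0$ with $T_{\kk\eta_{s_2}} v = R_e(\eta_{s_2}) v$ and $T_{\kk\eta_{s_1}}^\top u = R_e(\eta_{s_1}) u$. Computing $\langle u, (T_{\kk\eta_{s_2}} - T_{\kk\eta_{s_1}}) v\rangle$ in two ways yields
\[
  \big(R_e(\eta_{s_2}) - R_e(\eta_{s_1})\big)\, \langle u, v\rangle
  = (s_2 - s_1) \int_\Omega\!\!\int_\Omega u(x)\, \kk(x,y)\, (1-\eta_\star(y))\, v(y)\, \mu(\mathrm{d}x)\mu(\mathrm{d}y).
\]
The main obstacle is proving the right-hand side is strictly positive, which reduces to $\mu\otimes\mu(\{(x,y) \in \Omega_0 \times A : \kk(x,y) > 0\}) > 0$. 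If this failed, then $\kk(\Omega, A) = 0$ (using that $\kk$ vanishes off $\Omega_0 \times \Omega_0$), so $A$ would be $\kk$-invariant; quasi-irreducibility of $\kk$ forces either $A \subset \{\kk\equiv 0\}$ a.s.\ (impossible since $A \subset \Omega_0$ with $\mu(A) > 0$) or $A^c \subset \{\kk\equiv 0\}$ a.s.\ (which entails $\kk \equiv 0$ everywhere and $R_0 = 0$, contradicting $\maxloss > 0$).

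\emph{Conclusion and formula for $\Csup(\maxloss)$.} Steps~1 and~2 together rule out $\mu(A) > 0$, so $\eta_\star = 1$ $\mu$-a.e.\ on $\Omega_0$, proving Assumption~\ref{hyp:loss**}. The same characterization shows that $\{\eta \in \Delta : R_e(\eta) = R_0\}$ equals $\{\eta \in \Delta : \eta = 1 \text{ $\mu$-a.e.\ on } \Omega_0\}$; since $C$ is non-increasing, its supremum on this set is attained at the pointwise minimum element $\ind{\{\kk\equiv 0\}^c}$, giving $\Csup(\maxloss) = C(\ind{\{\kk\equiv 0\}^c})$. Finally, if $\kk$ is irreducible, $\{\kk\equiv 0\}$ is a $\kk$-invariant set, hence $\mu$-null or $\mu$-conull; the second case would give $\kk \equiv 0$ a.e.\ and $R_0 = 0$, contradicting $\maxloss > 0$, so $\{\kk\equiv 0\}$ is $\mu$-null, $\ind{\{\kk\equiv 0\}^c} = \un$ a.s., and $\Csup(\maxloss) = C(\un) = 0$.
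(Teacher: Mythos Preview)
Your proof is correct and follows the same overall architecture as the paper's: pass to the homotopy $\eta_s = \eta_\star + s(1-\eta_\star)$ so as to work with irreducible kernels, use Perron eigenvectors to detect that the spectral radius must strictly increase along this homotopy unless $\eta_\star = 1$ a.e.\ on $\Omega_0$, and then deduce the formula for $\Csup(\maxloss)$.

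The one genuine methodological difference is in how you extract the strict increase. The paper applies an eigenvalue perturbation result (Theorem~2.6 of \cite{EffectivePertuBenoit}) at a single operator $T_{\kk\eta}$ with $\inf\eta>0$, obtaining a first-order expansion $R_e(\eta+\varepsilon(1-\eta)) = R_e(\eta) + \varepsilon\langle \vg, T_{\kk(1-\eta)}\vd\rangle + O(\varepsilon^2)$, where $\vg,\vd$ are left and right Perron eigenvectors of the \emph{same} operator; vanishing of the linear term then forces $\eta=\un$. You instead pick two distinct parameters $0<s_1<s_2\leq s_0$, take the right Perron eigenvector $v$ of $T_{\kk\eta_{s_2}}$ and the left Perron eigenvector $u$ of $T_{\kk\eta_{s_1}}$, and obtain the exact identity
\[
\big(R_e(\eta_{s_2}) - R_e(\eta_{s_1})\big)\langle u,v\rangle \;=\; (s_2-s_1)\,\langle u,\,T_{\kk(1-\eta_\star)}v\rangle,
\]
from which the same vanishing conclusion follows. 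Your route is slightly more elementary in that it avoids invoking an external analytic perturbation theorem and yields an exact rather than asymptotic relation; the paper's route is perhaps more conceptually transparent as a ``derivative of the eigenvalue'' computation. Either way, the heart of the matter---positivity of $u,v$ on $\Omega_0$ from irreducibility, and the invariance argument on $A=\Omega_0\cap\{\eta_\star<1\}$---is identical.
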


\begin{proof}
  The quasi-irreducible case can easily be deduced from the irreducible case, so we assume that
  $\kk$ is irreducible. In particular, we have $\kk(\Omega, y)>0$ almost surely. Let
  $\eta\in\Delta$ be a local maximum of $R_e$ on $\Delta$; we want to show that it is also
  a global maximum.

  Suppose first that $\inf\eta >0$. Then $\kk\eta$ is irreducible with finite double norm.
  According \cite[Theorem V.6.6 and Example V.6.5.b]{schaefer_banach_1974}, the eigenspace
  of $T_{\kk \eta}$ associated to $R_e(\eta)$ is one-dimensional and it is spanned by a
  vector $\vd$ such that $\vd>0$ almost surely, and the corresponding left eigenvector
  associated to $R_e(\eta)$, say $\vg$, can be chosen such that $\langle \vg,
  \vd\rangle=1$ and $\vg>0$ almost surely. According to \cite[Theorem
  2.6]{EffectivePertuBenoit}, applied to $L_0=T_{\kk\eta}$ and $L = T_{\kk(\eta +
  \varepsilon(1-\eta))}$, we have, using that $\norm{L_0 - L}=O(\varepsilon)$ thanks to
  \eqref{eq:double-norm-norm}: \[
    R_e( \eta+\varepsilon(1-\eta)) = R_e(\eta) + \varepsilon \langle \vg,T_{\kk(1-\eta)}
  \vd \rangle + O(\varepsilon^2). \] Since $R_e$ has a local maximum at $\eta$, the
  first order term on the right hand side vanishes, so $\vg(x)\kk(x,y)(1-\eta(y))\vd(y)
  = 0$ for $\mu$ almost all $x$ and $y$. Since $\vg$ and $\vd$ are positive almost surely
  and $\kk$ is irreducible, we get that $\kk(\Omega, y)(1- \eta(y))=0$ almost surely and
  thus $\eta(y) = 1$ almost surely. Therefore $\eta = \un$, which is a global maximum for
  $R_e$.

 Finally, suppose that $\inf\eta = 0$. Let $\mathcal{O}$ be an open subset of $\Delta$ on
 which $R_e\leq R_e(\eta)$ and with $\eta\in \mathcal{O}$. For $\varepsilon>0$ small enough, the strategy
 $\eta_\varepsilon = \eta +\varepsilon(1-\eta)$ belongs to $\mathcal{O}$ and satisfies
 $R_e(\eta) \leq R_e(\eta_\varepsilon) \leq R_e(\eta)$ (where the first inequality comes
 from the fact that $R_e$ is non-decreasing). Therefore $\eta_\varepsilon$ is a local
 maximum, and thus $\eta_\varepsilon=\un$ almost surely. This readily implies that $\eta=\un$
 almost surely.

  We deduce that if $\eta$ is a local maximum, then $\eta=1$ almost surely. Thus $\eta$ is
  a global maximum and $\Csup(\maxloss)=C(\un)=0$. This ends the proof.
\end{proof}

\begin{lemma}\label{lem:L**=I}
  Consider the SIS model $\param=[(\Omega, \cf, \mu), k, \gamma]$ under
  Assumption~\ref{hyp:k-g}. If $k$ is quasi-irreducible, then Assumption \ref{hyp:loss**}
  holds for $\loss=\I$ and $\Csup (\maxloss)=C(\ind{\{\kk\equiv 0\}^c})$ (which is 0 if
  $k$ is irreducible).
\end{lemma}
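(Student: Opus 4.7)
Plan. The plan is to imitate the proof of Lemma~\ref{lem:L**}, replacing $R_e$ by $\I$ throughout. First, I would reduce the quasi-irreducible case to the irreducible one: since $k(x,\cdot)$ and $k(\cdot,x)$ vanish on $\{k\equiv 0\}$, the vector field $F_\eta$ evaluated at any $g\in\Delta$ does not see $\eta$ restricted to $\{k\equiv 0\}$, so $\mathfrak{g}_\eta$ vanishes on $\{k\equiv 0\}$ and $\I(\eta)$ is a function of $\eta|_{\{k\equiv 0\}^c}$ alone; one may therefore work with the irreducible restriction of $k$ to $\{k\equiv 0\}^c$. Since $\maxloss>0$ we have $R_0>1$. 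Let $\eta\in\Delta$ be a (weak) local maximum of $\I$; the goal is to prove $\eta=\un$ a.s.\ on $\{k\equiv 0\}^c$.

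Consider the perturbation $\eta_\varepsilon=\eta+\varepsilon(1-\eta)\in\Delta$, which converges weakly to $\eta$ as $\varepsilon\to 0^+$ and satisfies $\eta_\varepsilon\geq\eta$. Combining the local-maximum inequality with the monotonicity of $\I$ from Proposition~\ref{prop:I}\ref{prop:mono-I} yields $\I(\eta_\varepsilon)=\I(\eta)$ for $\varepsilon$ small enough. The difference decomposes as
\[
 \I(\eta_\varepsilon)-\I(\eta) \;=\; \varepsilon\int\mathfrak{g}_{\eta_\varepsilon}(1-\eta)\,\rd\mu \;+\; \int(\mathfrak{g}_{\eta_\varepsilon}-\mathfrak{g}_\eta)\,\eta\,\rd\mu,
\]
with both summands non-negative thanks to \eqref{eq:mon-gh}. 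Both must therefore vanish; in particular $\mathfrak{g}_{\eta_\varepsilon}(1-\eta)=0$ a.s., so, using $\mathfrak{g}_\eta\leq\mathfrak{g}_{\eta_\varepsilon}$, one obtains $\mathfrak{g}_\eta=0$ a.s.\ on $A^c$, where $A=\{\eta=1\}$.

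Assume first that $\mathfrak{g}_\eta\neq 0$ and set $B=\{\mathfrak{g}_\eta>0\}\subset A$. For $x\in B^c$, $\mathfrak{g}_\eta(x)=0$, so $F_\eta(\mathfrak{g}_\eta)(x)=0$ reduces to $\int k(x,y)\,\eta(y)\mathfrak{g}_\eta(y)\,\rd\mu(y)=0$. Since $\mathfrak{g}_\eta\,\eta=\mathfrak{g}_\eta$ a.s.\ (as $\mathfrak{g}_\eta$ is supported on $A\subset\{\eta=1\}$), this yields $\int_B k(x,y)\mathfrak{g}_\eta(y)\,\rd\mu(y)=0$, and since $\mathfrak{g}_\eta>0$ on $B$ one deduces $k(B^c,B)=0$, i.e., $B$ is $k$-invariant. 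Irreducibility then forces $\mu(B)\in\{0,1\}$, and since $\mu(B)>0$, $\mu(B)=1$; combined with $B\subset A$, one concludes $\eta=\un$ a.s., as desired.

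The main obstacle is the complementary case $\mathfrak{g}_\eta=0$, which corresponds via Proposition~\ref{prop:I}\ref{prop:min-I} to $R_e[k/\gamma](\eta)\leq 1$ and $\I(\eta)=0$: here the local-maximum hypothesis is degenerate in the subcritical region. Following the $\inf\eta=0$ step in Lemma~\ref{lem:L**}, I would perturb further along $(\eta_t)_{t\in[0,1]}$ from $\eta$ towards $\un$ and use the continuity of $R_e$ (Theorem~\ref{th:continuity-R}) combined with $R_e(\un)=R_0>1$: this produces, for $t$ slightly above the threshold at which $R_e(\eta_t)$ crosses~$1$, nearby strategies with $\I(\eta_t)>0$, contradicting the local-maximum hypothesis unless $\eta=\un$. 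Finally, once $\un$ is identified as the unique maximizer of $\I$ on $\{k\equiv 0\}^c$, the identity $\Csup(\maxloss)=C(\ind{\{k\equiv 0\}^c})$ follows since $C$ is non-increasing, so its supremum over $\{\eta\in\Delta \,\colon\, \I(\eta)=\maxloss\}=\{\eta\in\Delta \,\colon\, \eta=\un \text{ on } \{k\equiv 0\}^c\}$ is attained by choosing $\eta=0$ on $\{k\equiv 0\}$, that is, $\eta=\ind{\{k\equiv 0\}^c}$.
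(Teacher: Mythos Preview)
Your overall strategy---perturb to $\eta_\varepsilon=\eta+\varepsilon(1-\eta)$, combine the local-maximum inequality with monotonicity of $\I$ to force equality in the chain, and extract $\int\mathfrak{g}_{\eta_\varepsilon}(1-\eta)\,\rd\mu=0$---is exactly the paper's. The paper does not split into the two cases $\mathfrak{g}_\eta\neq 0$ versus $\mathfrak{g}_\eta=0$; from $\int\mathfrak{g}_{\eta_\varepsilon}(\eta_\varepsilon-\eta)\,\rd\mu=0$ it directly invokes that $\mathfrak{g}_{\eta_\varepsilon}$ is $\mu$-a.e.\ positive (irreducibility of $\kk\eta_\varepsilon$, via \cite[Remark~4.11]{delmas_infinite-dimensional_2020}) to conclude $\eta_\varepsilon=\eta$ a.s., hence $\eta=\un$. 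Your invariant-set argument on $B=\{\mathfrak{g}_\eta>0\}$ in the case $\mathfrak{g}_\eta\neq 0$ is a correct, more self-contained alternative that avoids the external citation.

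There is, however, a genuine gap in your treatment of the case $\mathfrak{g}_\eta=0$, and your proposed fix does not close it. Moving along $\eta_t=\eta+t(1-\eta)$ until $R_e(\eta_t)$ crosses~$1$ lands you at $\eta_{t^*}$ with $\|\eta_{t^*}-\eta\|_1=t^*\int(1-\eta)\,\rd\mu>0$ whenever $R_e(\eta)<1$ (so that $t^*>0$); such $\eta_t$ are \emph{not} in an arbitrarily small weak neighbourhood of $\eta$, so finding $\I(\eta_t)>0$ there does not contradict the local-maximum hypothesis. Worse, this is not a technicality that can be patched: by weak continuity of $R_e$ (Theorem~\ref{th:continuity-R}), any $\eta$ with $R_e(\eta)<1$ has a whole weak neighbourhood on which $R_e<1$, hence $\I\equiv 0$, so $\eta$ \emph{is} a local maximum of $\I$ with $\I(\eta)=0<\maxloss$ whenever $R_0>1$. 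In particular $\eta=0$ already violates Assumption~\ref{hyp:loss**} for $\loss=\I$. The paper's proof shares this issue---it asserts $\mathfrak{g}_{\eta_\varepsilon}>0$ a.e.\ without ensuring $R_e(\eta_\varepsilon)>1$---so the difficulty you flagged as the ``main obstacle'' is real, and neither your argument nor the paper's disposes of it.
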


\begin{proof}
  The quasi-irreducible case can easily be deduced from the irreducible case, so we assume
  that $k$ is irreducible.

  Set $\kk=k/\gamma$. Suppose that $\I$ has a local maximum at some $\eta\in\Delta$. For
  $\varepsilon\in (0, 1)$, the kernel $\kk {\eta_\varepsilon}$, with $\eta_\varepsilon=
  \eta+\varepsilon(1-\eta)$, is irreducible (with finite double norm) since $k$ is
  irreducible and $\gamma$ is positive and bounded. We have that for $\varepsilon>0$ small
  enough:
  \[ \I(\eta) \geq \I(\eta_\varepsilon)= \int_\Omega \mathfrak{g}_{\eta_\varepsilon}\,
    \eta_\varepsilon \, \mathrm{d}\mu \geq \int_\Omega \mathfrak{g}_{\eta_\varepsilon}\,
    \eta\, \mathrm{d}\mu
    \geq \int_\Omega
    \mathfrak{g}_{\eta}\, \eta\, \mathrm{d}\mu
    = \I(\eta),
  \]
  where we used that $\eta\leq \eta_\varepsilon$ and $0\leq \mathfrak{g}_\eta \leq
  \mathfrak{g}_{\eta_\varepsilon}$, see \eqref{eq:mon-gh}. Therefore all these quantities
  are equal. Since the equilibrium $\mathfrak{g}_{\eta_\varepsilon}$ is $\mu$-a.e.
  positive thanks to \cite[Remark~4.11]{delmas_infinite-dimensional_2020} as $\kk
  \eta_\varepsilon$ is irreducible, we must have $\eta_\varepsilon = \eta$ a.s, which is only
  possible if $\eta = 1$ almost surely.

  Since $\I(\un)> \I(\eta)$ for any $\eta\neq \un$, we also get $\Csup(\maxloss)=0$, with
  $\maxloss=\I(\un)=\int_\Omega \mathfrak{g}\, \rd \mu$.
\end{proof}

\section{Miscellaneous properties for set of outcomes and the Pareto
frontier}\label{sec:diversCL}

We prove results concerning the feasible region, the stability of the Pareto frontier and
its geometry.

\subsection{No holes in the feasible region}

We check there is no hole in the feasible region.

\begin{proposition}\label{prop:trou}
  Suppose that Assumption \ref{hyp:loss+cost} holds. The feasible region~$\FF$ is compact,
  path connected, and its complement is connected in $\R^2$. It is the whole region
  between the graphs of the one-dimensional value functions:
  \begin{align*}
    \FF &= \{ (c,\ell) \in \R^2 \,\colon\, 0\leq c \leq \maxcost,\,
    \lossinf(c) \leq \ell \leq \lossup(c) \} \\
	&= \{ (c,\ell) \in \R^2 \,\colon\, 0\leq \ell \leq \maxloss,\,
	\Cinf(\ell) \leq c \leq \Csup(\ell)\}.
  \end{align*}
\end{proposition}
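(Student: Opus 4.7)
The compactness of $\FF$ is immediate from Lemma~\ref{lem:D-compact}: $\Delta$ is weakly compact and the map $(C,\loss)$ is weakly continuous by Assumption~\ref{hyp:loss+cost}. For path-connectedness, any two strategies $\eta_0,\eta_1\in\Delta$ are joined by the linear path $\eta_t = (1-t)\eta_0+t\eta_1$, which stays in $\Delta$ by convexity and is weakly continuous (test against any $h\in L^\infty$: $\int h\,\eta_t\,\mathrm{d}\mu$ is affine in $t$), so its image under $(C,\loss)$ is a continuous path in $\FF$.

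One inclusion in the formula is tautological: if $(c,\ell)=(C(\eta),\loss(\eta))$ then $\eta$ is feasible for both optimization problems defining $\lossinf(c)$ and $\lossup(c)$, forcing $\lossinf(c)\le\ell\le\lossup(c)$. For the reverse inclusion, fix $(c,\ell)$ in the candidate region $\Phi$. I first establish the two corner points $(c,\lossinf(c))$ and $(c,\lossup(c))$ belong to $\FF$. Pick $\eta_-$ realizing $\lossinf(c)$ under $C(\eta_-)\le c$; the cost along the path $\alpha\mapsto\alpha\eta_-$ decreases continuously from $\maxcost$ to $C(\eta_-)\le c$, so by the intermediate value theorem there is $\alpha^*$ with $C(\alpha^*\eta_-)=c$, and the two-sided bound $\lossinf(c)\le\loss(\alpha^*\eta_-)\le\loss(\eta_-)=\lossinf(c)$ (from the definition of $\lossinf$ and monotonicity of $\loss$) forces equality; set $\eta_c^-:=\alpha^*\eta_-$. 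A symmetric argument on the path $\alpha\mapsto(1-\alpha)\eta_+ +\alpha\un$ produces $\eta_c^+$ with $C(\eta_c^+)=c$ and $\loss(\eta_c^+)=\lossup(c)$. For intermediate $\ell\in[\lossinf(c),\lossup(c)]$, I apply the Poincaré–Miranda theorem to the continuous map $(s,t)\mapsto(C(\eta_{s,t})-c,\ \loss(\eta_{s,t})-\ell)$ on $[0,1]^2$, where
\[
  \eta_{s,t}=(1-s)(1-t)\cdot 0+(1-s)t\,\eta_c^+ +s(1-t)\,\eta_c^- + st\,\un\ \in\ \Delta.
\]
On the four edges the monotonicity of $C$ and $\loss$ yield $C(\eta_{0,t})\ge c$ (since $\eta_{0,t}=t\eta_c^+\le\eta_c^+$), $C(\eta_{1,t})\le c$ (since $\eta_{1,t}\ge\eta_c^-$), $\loss(\eta_{s,0})\le\lossinf(c)\le\ell$ (since $\eta_{s,0}\le\eta_c^-$), and $\loss(\eta_{s,1})\ge\lossup(c)\ge\ell$ (since $\eta_{s,1}\ge\eta_c^+$), which is exactly the sign pattern required by Poincaré–Miranda (up to flipping the first coordinate). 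A zero of the map therefore exists and yields the desired $\eta\in\Delta$ with $(C(\eta),\loss(\eta))=(c,\ell)$.

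The second equivalent description of $\FF$ in terms of $\Cinf$ and $\Csup$ follows from the tautologies $\lossinf(c)\le\ell\iff\Cinf(\ell)\le c$ and $\lossup(c)\ge\ell\iff\Csup(\ell)\ge c$, each obtained by unravelling definitions. For the connectedness of the complement, now that $\FF$ coincides with the region between two non-increasing curves on $[0,\maxcost]$, the set $\R^2\setminus\FF$ is the union of the exterior of $[0,\maxcost]\times[0,\maxloss]$, the open region strictly above $\lossup$, and the open region strictly below $\lossinf$; each piece is path connected (one may always escape vertically to infinity and then move sideways) and each is directly joined to the unbounded exterior, so the complement is connected. The main obstacle in the argument is precisely the intermediate case of the reverse inclusion: the fibre $\{C=c\}\subset\Delta$ need not be connected and the linear interpolation from $\eta_c^-$ to $\eta_c^+$ does not preserve the cost constraint, so a one-dimensional intermediate value argument does not suffice and the two-dimensional topological argument via Poincaré–Miranda is essential.
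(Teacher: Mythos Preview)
Your proof is correct and arrives at the same conclusion by a genuinely different route for the key step (the reverse inclusion at an interior point $(c,\ell)$). The paper argues by contradiction via winding numbers: it builds a loop in $\FF$ from four linear paths $0\to\etainf\to\un\to\etasup\to 0$, contracts it through the homotopy $\gamma_s(t)=(C(s\eta_t),\loss(s\eta_t))$, and compares winding numbers around $A=(c,\ell)$ at $s=0$ and $s=1$ to force $A\in\FF$. You instead construct explicit strategies $\eta_c^\pm$ with cost exactly $c$ and extremal loss, then apply Poincar\'e--Miranda to the bilinear patch $\eta_{s,t}$ on $[0,1]^2$; the monotonicity of $C$ and $\loss$ gives the required sign conditions on the four edges directly.

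Both arguments are degree-theoretic at heart, but yours is a bit more direct: it produces a witness $\eta$ rather than deriving a contradiction, and the edge verifications are purely order-theoretic. The paper's approach, by contrast, needs only the looser inequalities $C(\etainf)\le c$, $\loss(\etainf)<\ell$ (and symmetrically for $\etasup$), so it does not first have to manufacture strategies with cost exactly $c$; this is a mild simplification on that side. Your treatment of the boundary points, the equivalence of the two descriptions via the tautologies $\lossinf(c)\le\ell\Leftrightarrow\Cinf(\ell)\le c$ (which uses that the infima are attained), and the connectedness of the complement all match the paper's reasoning.
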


\begin{proof}
  The region $\FF$ is compact and path-connected as a continuous image by $(C,\loss)$ of
  the compact, path-connected set $\Delta$.

  By symmetry, it is enough to prove that $\FF$ is equal to
  $F_1=\{ (c,\ell) \in \R^2 \,\colon\, 0\leq c \leq \maxcost,\, \lossinf(c) \leq \ell \leq
  \lossup(c) \} $. Let $(c,\ell) \in \FF$ and $\eta\in \Delta$ be such that $(c,\ell) =
  (C(\eta),\loss(\eta))$. By definition of $\lossinf$ and $\lossup$, we have: $
  \lossinf(c) = \lossinf(C(\eta)) \leq \loss( \eta) \leq \lossup(C(\eta)) = \lossup(c)$.
  We deduce that $(c,\ell)\in F_1$.

  \medskip

  Let us now prove that $F_1\subset \FF$. Let us first consider a point of the form $(c,
  \lossinf(c))$, where $0\leq c \leq \maxcost$. By definition, there exists $\eta$ such
  that $C(\eta) \leq c$ and $\loss(\eta) = \lossinf(c)$. Let $\eta_t = t\eta$. The map
  $t\mapsto C(\eta_t)$ is continuous from $[0,1]$ to $[C(\eta),\maxcost]$, and
  $c\in[C(\eta),\maxcost]$, so there exists $s$ such that $C(\eta_s) = c$. Since $\loss$
  is non-decreasing, $\loss(\eta_s)\leq \loss(\eta)$. By definition of $\lossinf(c)$,
  $\loss(\eta_s)\geq \lossinf(c)$. Therefore $(c,\lossinf(c)) = (C(\eta_s),\loss(\eta_s))$
  belongs to $\FF$. Similarly the graphs of $\Cinf$, $\Csup$ and $\lossup$ are also
  included in $\FF$.

  So, it is enough to check that, if $A=(c, \ell)$ is in $F_1$, with $c\in(0,\maxcost)$
  and $\ell\in(\lossinf(c),\lossup(c))$, then $A$~belongs to $\FF$. We shall assume that
  $A\not\in \FF$ and derive a contradiction by building a loop in $\FF$ that encloses $A$
  and which can be continuously contracted into a point in $\FF$.

  Since $\lossinf(c) < \ell< \lossup(c)$, there exist $\etainf$ and $\etasup$ such that:
  \[
    C(\etainf) \leq c, \quad \loss(\etainf) < \ell, \quad C(\etasup) \geq c \quad
  \text{and}\quad \loss(\etasup) > \ell.
  \]
  We concatenate the four paths defined for $u\in [0, 1]$:
  \[
    u \mapsto u\etainf, \quad
    u\mapsto (1-u) \etainf + u, \quad u\mapsto (1-u) + u\etasup \quad\text{and}\quad
  u\mapsto (1-u) \etasup,
  \]
  to obtain a continuous loop $(\eta_t, t\in [0, 4])$ from $[0,4]$ to $\Delta$, such that:
  \[ \eta_0 = \eta_4 = 0, \quad \eta_1 = \etainf, \quad \eta_2 = 1 \quad\text{and}\quad
  \eta_3 = \etasup.
  \]

  We now define a continuous family of loops $(\gamma_s, s\in [0, 1])$ in $\R^2$ by
  \[
    \gamma_s(t) = (C(s \eta_t),\loss(s\eta_t), t\in [0, 4]).
  \]
  By definition, for all $s\in[0,1]$, $\gamma_s$ is a continuous loop in $F$. Since $A =
  (c,\ell) \notin \FF$, the loops $\gamma_s$ do not contain $A$, so the winding number
  $W(\gamma_s,A)$ is well-defined (see for example \cite[Definition 6.1]{HJ09}). As
  $A\not\in \FF$, we get that $\gamma_s$ is a continuous deformation in
  $\R^2\setminus\{A\}$ from $\gamma_1$ to $\gamma_0$. Thanks to \cite[Theorem 6.5]{HJ09},
  this implies that $W(\gamma_s,A)$ does not depend on $s\in [0, 1]$.

  For $s=0$, the loop degenerates to the single point $(C(0),0)$ so the winding number is
  $0$. For $s=1$, let us check that the winding number is $1$, which will provide the
  contradiction. To do this, we compare $\gamma_1$ with a simpler loop $\delta$ defined
  by:
  \[ \delta (0) = \delta (4) = (\maxcost,0), \quad \delta (1) = (0,0), \quad \delta (2) =
    (0,\maxloss) \quad\text{and}\quad
    \delta (3) = (\maxcost,\maxloss),
  \]
  and by linear interpolation for non integer values of $t$: in other words, $\delta $ runs
  around the perimeter of the axis-aligned rectangle with corners $(0,0)$ and
  $(\maxcost,\maxloss)$. Clearly, we have $W(\delta , A)=1$.

  Let $M_t$, $N_t$ denote $\gamma_1(t)$ and $\delta (t)$ respectively. For $t\in [0,1]$,
  we have $N_t = ((1-t)\maxcost,0)$, so the second coordinate of $\overrightarrow{AN_t}$
  is non-positive. On the other hand $\loss(t\etainf) \leq \loss(\etainf) < \ell$, so the
  second coordinate of $\overrightarrow{AM_t}$ is negative. Therefore the two vectors
  $\overrightarrow{AN_t}$ and $\overrightarrow{AM_t}$ cannot point in opposite directions.
  Similar considerations for the other values of $t\in [1, 4]$ show that
  $\overrightarrow{AN_t}$ and $\overrightarrow{AM_t}$ never point in opposite directions.
  By \cite[Theorem 6.1]{HJ09}, the winding numbers $W(\gamma_1, A)$ and $W(\delta, A)$ are
  equal, and thus $W(\gamma_1, A)=1$.

  This gives that $A\in \FF$ by contradiction, and thus $F_1\subset \FF$.

  \medskip

  Finally, it is easy to check that $F_1$ has a connected complement, because $F_1$ is
  bounded, and all the points in $F_1^c$ can reach infinity by a straight line: for
  example, if $\ell> \lossup(c)$, then the half-line $\{(c,\ell'), \ell' \geq \ell\}$ is
  in $F_1^c$.
\end{proof}

\subsection{Stability}
We can consider the stability of the Pareto frontier and the set of
Pareto optima. Recall that, thanks to~\eqref{eq:FL=C*}, the graph
$\{(c,\lossinf(c)) \, \colon \, c \in [0,\maxcost]\}$ of~$\lossinf$ is
the union of the Pareto frontier and the straight line joining
$(0,\Cinf(0))$ to $(0,\maxcost)$ and can thus be seen as an extended
Pareto frontier. The proof of the following proposition is immediate. It
implies in particular the convergence of the extended Pareto
frontier. This result can also easily be adapted to the anti-Pareto frontier.

\begin{proposition}\label{prop:F-stab}
 Let $C$ be a cost function and $(\loss^{(n)}, n\in \N)$ a sequence of
 loss functions converging uniformly on $\Delta$ to a loss function
 $\loss$. Assume that Assumptions \ref{hyp:loss+cost}, \ref{hyp:cost}
 and \ref{hyp:loss} hold for the cost $C$ and the loss functions
 $\loss^{(n)}$, $n\in \N$, and $\loss$. Then $\lossinf^{(n)}$ converges
 uniformly to $\lossinf$. Let $\eta\in \Delta$ be the weak limit of a
 sequence $(\eta_n, n\in \N)$ of Pareto optima, that is
 $\eta_n\in \mathcal{P}_{\loss^  {(n)}}$ for all $n\in \N$. If
 $C(\eta)\leq \Cinf(0)$, then we
 have $\eta\in \mathcal{P}_{\loss}$.
\end{proposition}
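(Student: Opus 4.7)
The plan is to prove the two claims separately, leveraging the graph characterization of the Pareto frontier given by Proposition~\ref{prop:f_properties}, in particular \eqref{eq:FL=C*} and the continuity of $\lossinf$.

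\textbf{Uniform convergence of $\lossinf^{(n)}$.} This follows immediately from the definitions. Fix $\varepsilon>0$ and choose $N$ large enough so that $\sup_{\eta\in\Delta}|\loss^{(n)}(\eta)-\loss(\eta)|\leq\varepsilon$ for $n\geq N$. For any $c\in[0,\maxcost]$ and any $\eta\in\Delta$ with $C(\eta)\leq c$, we have $\loss(\eta)-\varepsilon \leq \loss^{(n)}(\eta) \leq \loss(\eta)+\varepsilon$. Taking infima over the (nonempty) admissible set $\{\eta\in\Delta:C(\eta)\leq c\}$ gives $|\lossinf^{(n)}(c)-\lossinf(c)|\leq\varepsilon$, uniformly in $c$.

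\textbf{Convergence of Pareto optima.} Let $\eta_n\in\mathcal{P}_{\loss^{(n)}}$ converge weakly to $\eta$ with $C(\eta)\leq\Cinf(0)$. Applying \eqref{eq:FL=C*} to each loss $\loss^{(n)}$, the Pareto optimality of $\eta_n$ yields
\[
\loss^{(n)}(\eta_n) = \lossinf^{(n)}(C(\eta_n)).
\]
I want to pass to the limit on both sides. On the left, the continuity of $\loss$ with respect to the weak topology gives $\loss(\eta_n)\to\loss(\eta)$, and the uniform convergence $\loss^{(n)}\to\loss$ on $\Delta$ then yields $\loss^{(n)}(\eta_n)\to\loss(\eta)$. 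On the right, the continuity of $C$ gives $C(\eta_n)\to C(\eta)$, and combining Step~1 with the continuity of $\lossinf$ (granted by Proposition~\ref{prop:f_properties}) gives $\lossinf^{(n)}(C(\eta_n))\to\lossinf(C(\eta))$. Therefore $\loss(\eta)=\lossinf(C(\eta))$, meaning that $\eta$ attains the infimum in the definition of $\lossinf(C(\eta))$ and hence solves Problem~\eqref{eq:Prob1} with $c=C(\eta)$. Since by hypothesis $c=C(\eta)\in[0,\Cinf(0)]$, Proposition~\ref{prop:f_properties}\ref{prop:c_constraint_binding} concludes that $\eta\in\mathcal{P}_\loss$.

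\textbf{Main obstacle.} The argument is essentially bookkeeping; the only delicate step is the convergence of the composed term $\lossinf^{(n)}(C(\eta_n))$, which requires combining the uniform convergence of $\lossinf^{(n)}$ (Step~1) with the weak continuity of both $C$ and $\lossinf$ (the latter being a nontrivial output of Proposition~\ref{prop:f_properties}). The hypothesis $C(\eta)\leq\Cinf(0)$ cannot be dropped: without it, $(C(\eta),\lossinf(C(\eta)))=(C(\eta),0)$ might lie on the flat zero segment of the graph of $\lossinf$ over $[\Cinf(0),\maxcost]$, which is disjoint from $\mathcal{F}_\loss$, so the conclusion $\eta\in\mathcal{P}_\loss$ would fail.
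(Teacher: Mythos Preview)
Your proof is correct and is exactly the kind of argument the paper has in mind: the paper states that the proof is ``immediate'' and omits it entirely, so you have simply filled in the expected details. One minor remark: the identity $\loss^{(n)}(\eta_n)=\lossinf^{(n)}(C(\eta_n))$ already follows from Proposition~\ref{thm:single-bi}~\ref{single-bi} (Pareto optima solve Problem~\eqref{eq:Prob1}), so invoking the full description \eqref{eq:FL=C*} of the Pareto frontier is slightly more than needed, but of course still correct.
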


\begin{remark}[On the continuity of the Pareto Frontier]\label{rem:cont-F}
 It might happen that some elements of $\mathcal{P}_{\loss}$ are not
 weak limit of sequence of elements of $\mathcal{P}_{\loss^  {(n)}}$; see
 \cite{ddz-2pop} for such discontinuity. It might also happen that a
 sequence $(\eta_n, \, n \in \N)$ such that
 $\eta_n \in \mathcal{P}_{\loss^  {(n)}}$ and $\loss^  {(n)}(\eta_n)>0$ converges
 to some $\eta$ that does not belong to $\mathcal{P}_\loss$ if
 $\loss(\eta) = 0$. In particular, in this case, ${\Cinf}_{,\loss^  {(n)}}(0)$
 does not converge to ${\Cinf}_{,\loss}(0)$, where ${\Cinf}_{,\loss'}$
 is the value function $\Cinf$ associated to the loss $\loss'$. This situation is represented
 in Figure~\ref{fig:stability}. In Figure~\ref{fig:perturb_kernel}, we
 have plotted a perturbation
 $\kk_\varepsilon = \kk + \varepsilon \sum_{n \in \N^\star}
 \mathds{1}_{I_n \times I_n}$ of the multipartite kernel $\kk$ defined
 in Example~\ref{ex:multipartite} for $\varepsilon>0$ small. According
 to Proposition~\ref{prop:Re-stab}, $R_e[\kk_\varepsilon]$ converges
 uniformly to $R_e[\kk]$ when $\varepsilon$ vanishes. However, the
 Pareto optimal strategies for $\kk_\varepsilon$ that cost more than
 $1/2$ do not converge to some Pareto optimal strategies for
 $\kk$. This can be seen in Figure~\ref{fig:pareto_stability}, where
 the Pareto frontier of $\kk_\varepsilon$ (in blue) corresponding to
 costs larger than 1/2 does not have a counterpart in the Pareto
 frontier of $\kk$ (in red).
\end{remark}

\begin{figure}
 \begin{subfigure}[T]{.5\textwidth} \centering
 \includegraphics[page=1]{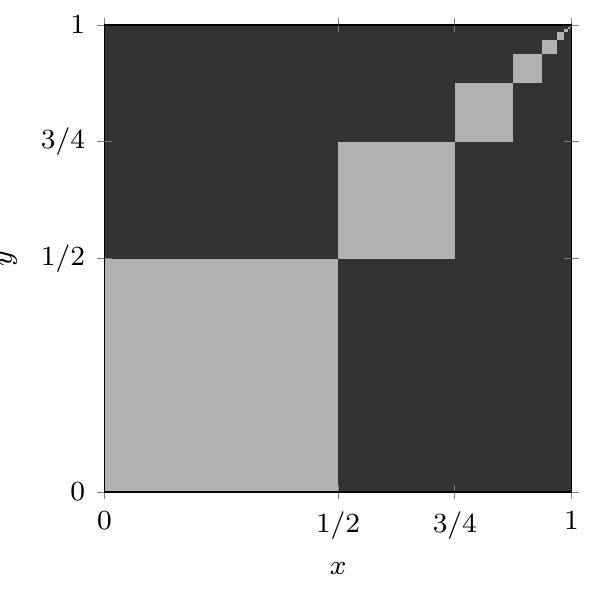}
 \caption{Grayplot of the kernel~$\kk_\varepsilon$, with~$\Omega = [0,1]$
 and~$\mu$ the Lebesgue measure ($\kk_\varepsilon$ is equal to the constant $\kappa>0$
 on the black zone and to $\varepsilon$ on the gray zone, with
 $\varepsilon>0$ small).}
 \label{fig:perturb_kernel}
 \end{subfigure}%
 \begin{subfigure}[T]{.5\textwidth} \centering
 \includegraphics[page=2]{perturbated_multipartite}
 \caption{In red, the Pareto frontier of the kernel $\kk$
 represented in Figure~\ref{fig:kernel} compared to the Pareto
 frontier of the kernel $\kk_\varepsilon$ in blue.}
 \label{fig:pareto_stability}
 \end{subfigure}%
 \caption{On the stability of the Pareto frontier} \label{fig:stability}
\end{figure}

\subsection{Geometric properties}

If the cost function is affine, then there is a nice geometric property
of the Pareto frontier.

\begin{lemma}\label{lem:affine-chord}
 Suppose that Assumption \ref{hyp:loss+cost} holds, the cost
 function is affine (\textit{i.e.}, $C=\costa$ given by
 \eqref{eq:def-C0}) and the loss function $\loss$ is sub-homogeneous.
 Then, we have $\lossinf( \theta c+ (1-\theta)\maxcost ) \leq \theta
 \lossinf(c)$ for all $c\in [0, \maxcost]$ and
 $\theta\in[0,1]$.
\end{lemma}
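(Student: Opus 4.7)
The plan is to exploit the compatibility between the scaling operation $\eta \mapsto \theta \eta$ on $\Delta$, the affine structure of the cost $\costa$, and the sub-homogeneity of the loss. Intuitively, scaling a strategy by $\theta$ turns its cost into a convex combination with $\maxcost$ (since $\costa$ is affine and $\costa(0) = \maxcost$), while the loss can only decrease by a factor of at most $\theta$.

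First, I fix $c \in [0, \maxcost]$ and invoke Proposition~\ref{prop:main-result} to obtain a minimizer $\eta \in \Delta$ of Problem~\eqref{eq:Prob1}, so that $\costa(\eta) \leq c$ and $\loss(\eta) = \lossinf(c)$. For any $\theta \in [0,1]$, I then consider the rescaled strategy $\theta \eta \in \Delta$. A direct computation using the definition~\eqref{eq:def-C0} gives
\begin{equation*}
  \costa(\theta \eta) = \int_\Omega (1 - \theta \eta)\, \costad \, \mathrm{d}\mu
  = (1-\theta) \int_\Omega \costad\, \mathrm{d}\mu + \theta \int_\Omega (1-\eta)\, \costad\, \mathrm{d}\mu
  = (1-\theta)\maxcost + \theta\, \costa(\eta),
\end{equation*}
where I used that $\maxcost = \costa(0) = \int \costad\, \mathrm{d}\mu$. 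Since $\costa(\eta) \leq c$, this yields $\costa(\theta \eta) \leq \theta c + (1-\theta)\maxcost$.

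Next, sub-homogeneity of $\loss$ gives $\loss(\theta \eta) \leq \theta \loss(\eta) = \theta \lossinf(c)$. Thus $\theta \eta$ is an admissible strategy for Problem~\eqref{eq:Prob1} with cost constraint $\theta c + (1-\theta)\maxcost$, and its loss is bounded by $\theta \lossinf(c)$. Taking the infimum over admissible strategies yields the desired inequality
\begin{equation*}
  \lossinf(\theta c + (1-\theta)\maxcost) \leq \loss(\theta \eta) \leq \theta \lossinf(c).
\end{equation*}

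There is no real obstacle here: the statement follows from a single well-chosen test strategy. The only point worth emphasizing is that the affine form of $\costa$ is precisely what makes the image of the segment $[0, \eta]$ under $\costa$ equal to the segment $[\costa(\eta), \maxcost]$, so that the same scaling parameter $\theta$ controls both the cost (via a convex combination with $\maxcost$) and the loss (via sub-homogeneity).
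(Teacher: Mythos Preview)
Your proof is correct and follows essentially the same approach as the paper: pick an optimizer $\eta$ for the cost constraint $c$, scale it by $\theta$, use the affine form of $\costa$ to bound the new cost by $\theta c+(1-\theta)\maxcost$, and use sub-homogeneity to bound the loss. The only minor difference is that you invoke Proposition~\ref{prop:main-result} (yielding $C(\eta)\leq c$), whereas the paper appeals to Proposition~\ref{prop:f_properties}~\ref{prop:c_constraint_binding} to obtain a Pareto optimum with $C(\eta)=c$; your version is slightly simpler since the inequality $C(\eta)\leq c$ already suffices.
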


\begin{remark}
 Geometrically, Lemma~\ref{lem:affine-chord} means that the graph of the loss
 $\lossinf \, \colon \, [0,\maxcost] \to [0,\maxloss]$ is below its chords with end point
 $(1,\lossinf(\un)) = (1,0)$. See Figures \ref{fig:pareto_frontier} for a typical
 representation of the Pareto frontier (red solid line).
\end{remark}

\begin{proof}
 Let $c\in [0, \maxcost]$ and $\theta\in[0,1]$. Thanks to Lemma
 \ref{lem:c-dec+L-hom}, Assumption \ref{hyp:loss} holds. Thus, thanks
 to Proposition \ref{prop:f_properties}~\ref{prop:l_star_decreases},
 there exists $\eta\in \cp_\loss$ with cost $C(\eta)=c$ and thus
 $\loss(\eta)=\lossinf (c)$. Since $C$ is affine, we have:
 \[ C(\theta \eta) = \theta C(\eta)+ (1 - \theta)\maxcost \leq \theta c
 +(1- \theta)\maxcost . \] Therefore, $\theta\eta$ is admissible for
 Problem~\eqref{eq:Prob1} with cost constraint
 $C(\cdot) \leq \theta c + (1-\theta) \maxcost$. This implies that
 $\lossinf(\theta c+ (1-\theta) \maxcost) \leq \loss(\theta \eta) \leq
 \theta \lossinf(c)$, thanks to the sub-homogeneity of the loss
 function $\loss$.
\end{proof}

In some case, we shall prove that the considered loss function is convex
(which in turn implies Assumption~\ref{hyp:loss}). In this case,
choosing a convex cost function implies that Assumption \ref{hyp:cost}
holds and the Pareto frontier is convex. A similar result holds in the
concave case. We provide a
short proof of this result.

\begin{proposition}\label{prop:cvex}
  Suppose that Assumption \ref{hyp:loss+cost} holds. If the cost function $C$ and the loss
  function $\loss$ are convex, then the functions $\Cinf$ and $\lossinf$ are convex. If
  the cost function $C$ and the loss function $\loss$ are concave, then the functions
  $\Csup$ and $\lossup$ are convex.
\end{proposition}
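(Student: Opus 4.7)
The plan is to argue directly from the variational definitions, exploiting that $\Delta$ is a convex set and that Proposition~\ref{prop:main-result} guarantees the infima and suprema are attained.

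For the first assertion, fix $c_1, c_2 \in [0, \maxcost]$ and $\theta \in [0, 1]$. I would use Proposition~\ref{prop:main-result} to pick $\eta_i \in \Delta$ with $C(\eta_i) \leq c_i$ and $\loss(\eta_i) = \lossinf(c_i)$, and then form the convex combination $\eta_\theta := \theta \eta_1 + (1 - \theta) \eta_2$, which still belongs to $\Delta$. Convexity of $C$ yields $C(\eta_\theta) \leq \theta c_1 + (1-\theta) c_2$, so $\eta_\theta$ is admissible for Problem~\eqref{eq:Prob1} at the budget $\theta c_1 + (1-\theta) c_2$; convexity of $\loss$ then gives
\begin{equation*}
  \lossinf(\theta c_1 + (1-\theta) c_2) \leq \loss(\eta_\theta) \leq \theta \lossinf(c_1) + (1-\theta) \lossinf(c_2),
\end{equation*}
which is the convexity of $\lossinf$. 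The convexity of $\Cinf$ follows by the mirror argument, exchanging the roles of $C$ and $\loss$ and of Problems~\eqref{eq:Prob1} and~\eqref{eq:Prob2}.

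For the second assertion, the cleanest route is to invoke the involution introduced at the start of Section~\ref{sec:Pareto-AF}: set $C'(\eta) := \maxloss - \loss(\eta)$ and $\loss'(\eta) := \maxcost - C(\eta)$. If $C$ and $\loss$ are concave on~$\Delta$, then $C'$ and $\loss'$ are convex and satisfy Assumption~\ref{hyp:loss+cost} (after the obvious normalisation), so the first part of the proposition applied to the pair $(C', \loss')$ asserts the convexity of $\Cinf'$ and $\lossinf'$. The identities $\Cinf'(c) = \maxloss - \lossup(\maxcost - c)$ and $\lossinf'(\ell) = \maxcost - \Csup(\maxloss - \ell)$ recorded just before Proposition~\ref{thm:single-bi-anti} then transfer this regularity back to $\lossup$ and~$\Csup$ through an affine change of variable.

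The argument is essentially formal; no serious obstacle arises. The main points requiring care are the convexity of~$\Delta$ (which is what makes $\eta_\theta$ an admissible strategy in the first place) and the correct bookkeeping of inequality directions in each of the four variational definitions. Existence of the optimizers $\eta_i$ used above, and of their analogs in the anti-Pareto problems, is furnished by Proposition~\ref{prop:main-result}; without this one would only obtain the conclusions up to an $\varepsilon$ slack, though a standard approximation argument would suffice in that setting as well.
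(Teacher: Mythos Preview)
Your treatment of the convex case is essentially the paper's own: pick optimizers via Proposition~\ref{prop:main-result}, form their convex combination in $\Delta$, and use convexity of $C$ and $\loss$ to conclude. The paper writes it out for $\Cinf$ and says $\lossinf$ is similar; you do the reverse, which amounts to the same thing.

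For the concave case, the paper simply says ``similar'', meaning the direct argument with all inequalities reversed; your involution route is a legitimate alternative. However, watch the very bookkeeping you flag at the end: the identity $\Cinf'(c)=\maxloss-\lossup(\maxcost-c)$ contains an outer negation, so convexity of $\Cinf'$ transfers to \emph{concavity} of $\lossup$, not convexity (and likewise for $\Csup$). The direct ``similar'' argument gives the same conclusion: if $\eta_i$ attains $\Csup(\ell_i)$ with $\loss(\eta_i)\geq\ell_i$, then concavity of $\loss$ makes $(1-\theta)\eta_0+\theta\eta_1$ admissible at $(1-\theta)\ell_0+\theta\ell_1$, and concavity of $C$ yields
\[
\Csup\bigl((1-\theta)\ell_0+\theta\ell_1\bigr)\;\geq\;(1-\theta)\Csup(\ell_0)+\theta\Csup(\ell_1).
\]
So both routes agree that $\Csup$ and $\lossup$ are concave; the word ``convex'' in the second sentence of the proposition appears to be a slip.
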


\begin{proof}
  Let $\ell_0,\ell_1 \in [0, \maxloss]$. By Proposition \ref{prop:main-result}, there
  exist $\eta_0$, $\eta_1$ such that $\loss (\eta_i) \leq \ell_i$ and $C(\eta_i) =
  \Cinf(\ell_i)$ for $i\in \{0,1\}$. For $\theta\in [0, 1]$, let $\ell=(1-\theta) \ell_0 +
  \theta \ell_1$. Since $C$ and $\loss$ are assumed to be convex, $\eta = (1-\theta)\eta_0
  + \theta \eta_1$ satisfies:
  \[
    C(\eta) \leq (1-\theta)\Cinf(\ell_0) + \theta
    \Cinf(\ell_1) \quad\text{and}\quad \loss (\eta) \leq (1-\theta)\ell_0 + \theta \ell_1.
  \]
  Therefore, we get that $\Cinf((1-\theta)\ell_0 + \theta \ell_1) \leq C(\eta) \leq
  (1-\theta)\Cinf(\ell_0) + \theta \Cinf(\ell_1)$, and $\Cinf$ is convex. The proof of the
  convexity of $\lossinf$ is similar. The concave case is also similar.
\end{proof}

\section{Equivalence of models by coupling}\label{sec:equivalent}
Even if in full generality, the cost
function could also be
 treated  as a parameter, we shall  for simplicity consider  only the uniform  cost
$\costu$ given by \eqref{eq:def-C} in  this section.  
(The interested
reader  can use   Remark \ref{rem:costa-costu}  for  a first  generalization to  the
affine cost function given by~\eqref{eq:def-C0}.)

\subsection{Motivation}

The aim of this section is to provide examples of different set of
parameters for which two kernel or SIS models are ``equivalent'', in the
intuitive sense that their Pareto frontiers 
are the same (as subsets of $\R_+^2$), and it is possible to map nicely the Pareto optima from
one model to the another.
In Section~\ref{sec:exple-couple}, we present an example where  discrete
models can be represented as a continuous models and an example  based on
measure preserving transformation in the spirit of the graphon theory. 
We shall  consider the two families of models:
\begin{itemize}
 \item \textbf{\emph{the kernel model}} characterized by
 $\param=[(\Omega, \cf,\mu), \kk]$, with Assumption \ref{hyp:k}
 fulfilled, and loss function $\loss=R_e$;
 \item \textbf{\emph{the SIS model}} characterized by
 $\param=[(\Omega, \cf,\mu), k, \gamma]$, with Assumption \ref{hyp:k-g}
 fulfilled, and loss function $\loss\in \{R_e, \I\}$;
\end{itemize}
where $(\Omega, \cf, \mu)$ is a probability space, $\kk$ and $k$ are
non-negative kernels on $\Omega$ and $\gamma$ is a non-negative function on
$\Omega$. 

\medskip

In order to emphasize the dependence of a quantity $H$ on the parameters
$\param$ of the model, we shall write $H[\param]$ for $H$. For example
we write: $\Delta[\param]$ for the set of functions
$\{\eta\in \cl(\Omega, \cf)\,\colon\, 1\geq \eta\geq 0\}$, which clearly
depends on the parameters $\param$; and the effective reproduction
function $R_e[\param]$. For example, under Assumption \ref{hyp:k-g}, we
have the equality of the following functions:
$R_e[(\Omega, \cf, \mu), k, \gamma]=R_e[(\Omega, \cf, \mu),k/\gamma, 1]=
R_e[(\Omega, \cf, \mu), k/\gamma]$, where for the last equality the left
hand-side refers to the SIS model and the right hand-side refers to the
kernel model (where Assumption \ref{hyp:k} holds as a consequence of
Assumption \ref{hyp:k-g}). Using~\eqref{eq:r(AB)} if $\inf \gamma>0$
(see \cite[Section~3]{ddz-Re} for details and more general results), then we
also have
$ R_e[(\Omega, \cf, \mu), k/\gamma] = R_e[(\Omega, \cf, \mu),
\gamma^{-1} k]$.

\subsection{On measurability}

Let us recall some well-known facts on measurability. Let~$(E, \ce)$ and~$(E', \ce')$ be
two measurable spaces. If~$E'=\R$, then we take~$\ce'=\mathcal{B}(\R)$ the
Borel~$\sigma$-field. Let~$f$ be a function from~$E$ to~$E'$. We denote by
$\sigma(f)=\{f^{-1}(A)\, \colon \, A\in \ce'\}$ the~$\sigma$-field generated by~$f$. In
particular~$f$ is measurable from~$(E, \ce)$ to $(E', \ce')$ if and only
if~$\sigma(f)\subset \ce$. Let~$\varphi$ be a measurable function from~$(E, \ce)$ to~$(E',
\ce')$. For~$\nu$ a measure on~$(E,
\ce)$, we write~$\varphi_\# \nu$ for the for the push-forward measure on~$(E',
\ce')$ of the measure~$\nu$ by the function~$\varphi$ (that is~$\varphi_\#
\nu(A)=\nu(\varphi^{-1}(A))$ for all~$A\in \ce'$). By definition of ~$\varphi_\# \nu$, for
a non-negative  measurable function~$g$ defined from~$(E', \ce')$ to~$(\R, \mathcal{B}(\R))$, we have:
\begin{equation}\label{eq:push}
  \int_{E'} g \, \mathrm{d} \varphi_\#\nu= \int _{E} g\circ
  \varphi \, \mathrm{d} \nu.
\end{equation}

Let~$f$ be a measurable function from~$(E, \ce)$ to~$(\R,
\mathcal{B}(\R))$. We recall that:
\begin{equation}
 \label{eq:def-f-phi0}
 \sigma(f)\subset \sigma(\varphi)\,
 \Longrightarrow\, f=g\circ \varphi,
\end{equation}
for some measurable function $g$ from~$(E', \ce')$ to~$(\R, \mathcal{B}(\R))$.

The random variables we consider are defined on a probability space, say
$(\Omega_0, \cf_0, \P)$.

\subsection{Coupled models}

We  refer   the  reader  to  \cite{janson2010graphons}   for  a  similar
development in  the graphon setting.  We first define coupled  models in
the next  definition and state  in Proposition~\ref{prop:coupling-optim}
that  coupled models  have  related (anti-)Pareto  optima  and the  same
(anti-)Pareto frontiers.

In the kernel  model, we consider  the models
$\param_i=[(\Omega_i, \cf_i,  \mu_i), \kk_i]$ for $i\in  \{1,2\}$, where
Assumption  \ref{hyp:k} holds  for  each  model; in  the  SIS model,  we
consider the models $\param_i=[(\Omega_i, \cf_i, \mu_i), k_i, \gamma_i]$
for  $i\in  \{1,2\}$,  where  Assumption \ref{hyp:k-g}  holds  for  each
model. In what follows, we simply  write $\Delta_i$ the set of functions
$\Delta$ for the model $\param_i$.

A measure $\pi$ on $(\Omega_1\times \Omega_2, \cf_1\otimes \cf_2)$ is a \emph{coupling} if
its marginals are $\mu_1$ and $\mu_2$.

\begin{definition}[Coupled models]\label{def:couple}
  The models $\param_1$ and $\param_2$ are \emph{coupled} if there exists two independent
  $\Omega_1\times \Omega_2$-valued random vectors $(X_1, X_2)$ and $(Y_1, Y_2)$ (defined
  on a probability space $(\Omega_0, \cf_0, \P)$) with the same distribution given by a
  coupling (\emph{i.e.} $X_i$ and $Y_i$ have distribution $\mu_i$) such that, $\P$-almost surely:
  \begin{align*}
    \text{Kernel model:}\quad
 &\kk_1(X_1,Y_1) = \kk_2(X_2,Y_2),\\
 \text{SIS model:}\quad
 & \gamma_1(X_1) = \gamma_2(X_2) \quad\text{and}\quad k_1(X_1,Y_1) = k_2(X_2,Y_2).
  \end{align*}
  In this case, two real-valued measurable functions $v_1$ and $v_2$ defined respectively
  on $\Omega_1$ and $\Omega_2$ are \emph{coupled} (through $V$) if there exists a
  real-valued $\sigma(X_1, X_2)$-measurable integrable random variable $V$ such that
  $\P$-almost surely:
  \[ \esp{V|\, X_i}=v_i(X_i) \quad\text{for $i\in \{1, 2\}$}. \]
\end{definition}

\begin{remark}\label{rem:couplage}
  We keep notation from Definition \ref{def:couple}
  \begin{propenum}
  \item\label{item:V-X1X2} Since $V$ is real-valued and $\sigma(X_1, X_2)$-measurable, we
    deduce from \eqref{eq:def-f-phi0} that there exits a measurable function $v$ defined
    on $\Omega_1\times \Omega_2$ such that $V=v(X_1, X_2)$, thus the following equality
    holds $\P$-almost surely: 
    \[
      \esp{v(Y_1, Y_2)|\, Y_i}=v_i(Y_i) \quad\text{for $i\in \{1, 2\}$}.
    \]
  \item\label{item:V1} If $W$ is a real-valued integrable $\sigma(X_1) \cap
    \sigma(X_2)$-measurable random variable, then setting $v_i(X_i)=\esp{W|X_i} = W$, the
    equality $v_1(X_1) = v_2(X_2)$ holds almost surely, and we get that $v_1$ and $v_2$ are
    coupled (through $W$).
  \item \label{item:V2} Let $\eta_1\in \Delta_1$. According to \eqref{eq:def-f-phi0}, there
    exists $\eta_2\in\Delta_2$ such that $\esp{\eta_1(X_1)|X_2} = \eta_2(X_2)$. Thus, by
    definition $\eta_1$ and $\eta_2$ are coupled (through $V = \eta_1(X_1)$).
  \end{propenum}
\end{remark}

The main result of this section, whose proof is given in Section \ref{sec:proof-coupling},
states that coupled models have coupled Pareto optimal strategies, and thus the same
(anti-)Pareto frontier.

\begin{proposition}[Coupling and Pareto optimality]
  \label{prop:coupling-optim}
  Let $\param_1$ and $\param_2$ be two coupled (kernel or SIS) models with the uniform
  cost function $C=\costu$ and loss function $\loss$ (with $\loss=R_e$ in the kernel model
  and $\loss\in \{R_e, \I\}$ in the SIS model). If the functions $\eta_1\in \Delta_1$ and
  $\eta_2\in \Delta_2$ are coupled, then: \[ \text{$\eta_1$ is Pareto optimal (for
      $\param_1$)} \, \Longleftrightarrow\, \text{$\eta_2$ is Pareto optimal (for
  $\param_2$)}. \]
  Furthermore, if $\eta_1\in \Delta_1$ is Pareto optimal (for $\param_1$), then there
  exists a Pareto optimal (for $\param_2$) strategy $\eta_2\in \Delta_2$ such that
  $\eta_1$ and $\eta_2$ are coupled. In particular, the (anti-)Pareto
  frontiers  are the same for the two models $\param_1$ and $\param_2$.
\end{proposition}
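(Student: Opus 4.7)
The plan is to construct a \emph{joint model} on the product probability space $(\Omega_1\times\Omega_2,\cf_1\otimes\cf_2,\pi)$ by lifting either $\param_1$ or $\param_2$ via the coordinate projections; the coupling assumption guarantees that the two lifts agree $\pi\otimes\pi$-almost everywhere, so the joint model is unambiguously defined. The argument then has two ingredients. First, for any coupled pair $(\eta_1,\eta_2)$, one shows $C(\eta_1)=C(\eta_2)$ and $\loss(\eta_1)=\loss(\eta_2)$. Second, given $\eta_1$ Pareto optimal for $\param_1$, Remark~\ref{rem:couplage}~\ref{item:V2} produces $\eta_2\in\Delta_2$ coupled to $\eta_1$; any competitor $\eta_2'$ beating $\eta_2$ on $\param_2$ would couple to some $\eta_1'\in\Delta_1$ with identical cost and loss, contradicting the Pareto optimality of $\eta_1$. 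This simultaneously shows that $\eta_2$ is Pareto optimal, that Pareto optimality is equivalent on both sides, and (by taking $V=\eta_1(X_1)$) that every Pareto optimum for $\param_1$ has a coupled Pareto optimum for $\param_2$; the anti-Pareto case is handled identically.

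Preservation of the uniform cost is immediate: by the tower property, $\costu(\eta_1)=1-\E[\eta_1(X_1)]=1-\E[V]=1-\E[\eta_2(X_2)]=\costu(\eta_2)$. For $R_e$, set $\tilde\kk((x_1,x_2),(y_1,y_2))=\kk_1(x_1,y_1)=\kk_2(x_2,y_2)$ (equality $\pi\otimes\pi$-a.e.\ by coupling) and $\tilde\eta(x_1,x_2)=v(x_1,x_2)$. Fubini gives $\|\tilde\kk\|_{p,q}=\|\kk_i\|_{p,q}<\infty$, so Assumption~\ref{hyp:k} transfers and $T_{\tilde\kk\tilde\eta}$ is a compact operator on $L^p(\pi)$. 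Let $E_i\subset L^p(\pi)$ denote the subspace of functions depending only on $x_i$, and $\Phi_i\colon L^p(\mu_i)\to E_i$ the canonical isometric isomorphism. A direct computation exploiting the two representations of $\tilde\kk$ yields the intertwining relations $T_{\tilde\kk\tilde\eta}\circ\Phi_i=\Phi_i\circ T_{\kk_i\eta_i}$ together with the range inclusions $T_{\tilde\kk\tilde\eta}(L^p(\pi))\subset E_i$, for $i=1,2$. The first gives $\|T_{\kk_i\eta_i}^n\|\le\|T_{\tilde\kk\tilde\eta}^n\|$; the second, by factoring $T_{\tilde\kk\tilde\eta}^n=(T_{\tilde\kk\tilde\eta}|_{E_i})^{n-1}\circ T_{\tilde\kk\tilde\eta}$, gives $\|T_{\tilde\kk\tilde\eta}^n\|\le\|T_{\kk_i\eta_i}^{n-1}\|\,\|T_{\tilde\kk\tilde\eta}\|$. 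Taking $n$-th roots and applying Gelfand's formula~\eqref{eq:def-rho} yields $\rho(T_{\tilde\kk\tilde\eta})=\rho(T_{\kk_1\eta_1})=\rho(T_{\kk_2\eta_2})$, hence $R_e[\param_1](\eta_1)=R_e[\param_2](\eta_2)$.

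For the SIS loss $\I$, lift $k$ and $\gamma$ to $\tilde k$ and $\tilde\gamma$ on the joint space analogously, so that Assumption~\ref{hyp:k-g} transfers. Let $\tilde{\mathfrak{g}}$ be the maximal equilibrium of the joint SIS model with vaccination $\tilde\eta$. Setting $h=T_{\tilde k\tilde\eta}(\tilde{\mathfrak{g}})$ and rewriting the equilibrium equation as $\tilde{\mathfrak{g}}=h/(\tilde\gamma+h)$, and noting that both $\tilde\gamma$ and $h$ belong to $E_1\cap E_2$ (the latter by the same range computation as above), one deduces $\tilde{\mathfrak{g}}\in E_1\cap E_2$. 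Writing $\tilde{\mathfrak{g}}=\Phi_i g_i$, the analogous intertwining $\tilde F_{\tilde\eta}\circ\Phi_i=\Phi_i\circ F_{\eta_i}$ makes $g_i\in\Delta_i$ an equilibrium for $\param_i$, so $g_i\le\mathfrak{g}_{\eta_i}$; conversely $\Phi_i\mathfrak{g}_{\eta_i}$ is itself an equilibrium on the joint model, dominated by $\tilde{\mathfrak{g}}$, forcing $g_i=\mathfrak{g}_{\eta_i}$. Two applications of the tower property then give
\[
\I(\eta_1)=\E\bigl[\mathfrak{g}_{\eta_1}(X_1)\,V\bigr]=\E\bigl[\tilde{\mathfrak{g}}(X_1,X_2)\,V\bigr]=\E\bigl[\mathfrak{g}_{\eta_2}(X_2)\,V\bigr]=\I(\eta_2).
\]

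The technical heart of the proof is the verification of the two range inclusions $T_{\tilde\kk\tilde\eta}(L^p(\pi))\subset E_i$ together with the intertwining relations, both of which rely crucially on using the two a.e.\ representations of $\tilde\kk$ inside the integral defining $T_{\tilde\kk\tilde\eta}$. Once these are in hand, the spectral radius identification via Gelfand and the identification of $\tilde{\mathfrak{g}}$ with the $\Phi_i$-pullbacks of $\mathfrak{g}_{\eta_i}$ are routine, and the equality of (anti-)Pareto frontiers follows from the first part of the proposition applied to every pair of coupled strategies.
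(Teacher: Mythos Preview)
Your overall architecture coincides with the paper's: both reduce the proposition to the key lemma that coupled strategies satisfy $C_1(\eta_1)=C_2(\eta_2)$ and $\loss_1(\eta_1)=\loss_2(\eta_2)$, and then invoke Remark~\ref{rem:couplage}~\ref{item:V2} to push Pareto optima across. The cost computation is identical. The genuine divergence is in how the loss equality is established.

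For $R_e$, the paper (Lemma~\ref{lem:equivalent_models}) works directly with eigenvectors: it shows via a conditional-independence argument (Lemma~\ref{lem:measurability}) that any eigenvector $v_1$ of $T_{\kk_1\eta_1}$ satisfies $v_1(X_1)\in\sigma(X_1)\cap\sigma(X_2)$, producing an eigenvector $v_2$ of $T_{\kk_2\eta_2}$ with the same eigenvalue. This yields the stronger conclusion $\spec_1(\eta_1)\cup\{0\}=\spec_2(\eta_2)\cup\{0\}$, not just equality of radii. Your route through the joint operator $T_{\tilde\kk\tilde\eta}$ and Gelfand's formula is correct and pleasantly operator-theoretic, but gives only the spectral radius.

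For $\I$, the difference is more interesting. The paper again transfers $\mathfrak{g}_{\eta_1}$ to an equilibrium $g_2$ for $\param_2$ via the measurability lemma, but then must argue that $g_2$ is \emph{maximal}; it does so by invoking the nontrivial characterization of Proposition~\ref{prop:caract-g} (maximality $\Leftrightarrow R_e(1-g)\le 1$), together with the already-established $R_e$ equality. Your sandwich argument through the joint maximal equilibrium $\tilde{\mathfrak g}$ bypasses Proposition~\ref{prop:caract-g} entirely, which is a real economy. One caveat: because the SIS dynamics in the paper is set up in $\cl$ (pointwise, not $L^\infty$), your intertwining for $i=2$ and the resulting equilibrium identities hold only $\pi$-a.e.\ when $\tilde k,\tilde\gamma$ are defined via $k_1,\gamma_1$. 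This is easily repaired by redefining candidate equilibria through the fixed-point formula $g\mapsto \Tinf_{k\eta}(g)/(\gamma+\Tinf_{k\eta}(g))$ on the relevant null sets, but you should make that step explicit.
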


The next Corollary  is useful for model reduction,  which corresponds to
merging individuals with identical  behavior, see the examples
in       Sections~\ref{sec:dis-cont}       and~\ref{sec:exple-esp-cond}.
Equation~\eqref{eq:P0-h-Eh} below  could also be stated  for anti-Pareto
optima; and the adaptation to the kernel  model is immediate.
\begin{corollary}\label{cor:couplage}
  Let $\param=[(\Omega,\cf,\P),k,\gamma]$ be a SIS model with the uniform cost function
  $C=\costu$ and loss function $\loss\in \{R_e, \I\}$. Let $\cg\subset\cf$ be a
  $\sigma$-field such that $\gamma$ is $\cg$-measurable and $k$ is
  $\cg\otimes\cg$-measurable. Then, for any $\eta\in \Delta[\param]$, we have:
  \begin{equation}
    \label{eq:P0-h-Eh} \text{$\eta$ is Pareto optimal} \, \Longleftrightarrow\,
    \text{$\esp{\eta|\, \cg}$ is Pareto optimal}.
  \end{equation}
\end{corollary}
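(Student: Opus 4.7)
The plan is to apply Proposition~\ref{prop:coupling-optim} to a suitable reduced model. Let $\param = [(\Omega, \cf, \P), k, \gamma]$ be the given SIS model, and introduce the reduced model $\param' = [(\Omega, \cg, \P|_\cg), k, \gamma]$. Since by hypothesis $k$ is $\cg \otimes \cg$-measurable and $\gamma$ is $\cg$-measurable, Assumption~\ref{hyp:k-g} is inherited by $\param'$ from $\param$, so $\param'$ is a well-defined SIS model in our framework.

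Next I would construct an explicit coupling of $\param$ and $\param'$ in the sense of Definition~\ref{def:couple}. On the probability space $(\Omega, \cf, \P)$ itself, take $X_1 = X_2 = X$ (the identity map) and independently $Y_1 = Y_2 = Y$ (an independent copy), so that $\sigma(X_1) = \cf$ and $\sigma(X_2) = \cg$ (viewing $X$ as a $(\Omega, \cg)$-valued random variable), with the diagonal law as coupling measure. The $\cg$-measurability hypothesis on $\gamma$ and $k$ gives $\gamma(X_1) = \gamma(X_2)$ and $k(X_1, Y_1) = k(X_2, Y_2)$ $\P$-almost surely, so $\param$ and $\param'$ are coupled.

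Now fix $\eta \in \Delta[\param]$ and set $\eta' = \esp{\eta \mid \cg}$. Taking the random variable $V = \eta(X)$, which is $\sigma(X_1, X_2)$-measurable and integrable, we find $\esp{V \mid X_1} = \eta(X_1)$ and $\esp{V \mid X_2} = \esp{\eta(X) \mid \cg}(X_2) = \eta'(X_2)$, so $\eta$ (viewed in $\Delta[\param]$) and $\eta'$ (viewed in $\Delta[\param']$) are coupled. Similarly, since $\eta'$ is $\cg$-measurable, taking $V' = \eta'(X)$ shows that $\eta'$, seen as an element of $\Delta[\param]$, is coupled to $\eta'$, seen as an element of $\Delta[\param']$ (this is exactly Remark~\ref{rem:couplage}~\ref{item:V1}).

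Applying Proposition~\ref{prop:coupling-optim} to both couplings yields the chain
\[
\eta \in \cp_\loss[\param] \,\Longleftrightarrow\, \eta' \in \cp_\loss[\param'] \,\Longleftrightarrow\, \eta' \in \cp_\loss[\param],
\]
which is precisely~\eqref{eq:P0-h-Eh}. I do not anticipate a serious obstacle: the only slightly delicate point is keeping track of the two roles played by $\eta'$ (once as a $\cf$-measurable, once as a $\cg$-measurable function), which is what forces the second coupling step and the use of Proposition~\ref{prop:coupling-optim} twice rather than once.
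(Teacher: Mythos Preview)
Your proposal is correct and follows essentially the same route as the paper: build the reduced model $\param'=[(\Omega,\cg,\P|_\cg),k,\gamma]$, couple $\param$ and $\param'$ via the diagonal map with an independent copy (the paper sets this up explicitly on $\Omega_0=\Omega^2$, which you should also do since you cannot have an independent $Y$ ``on $(\Omega,\cf,\P)$ itself''), and invoke Proposition~\ref{prop:coupling-optim}. Your second application of the proposition---passing from Pareto optimality of $\eta'$ in $\param'$ back to Pareto optimality of $\eta'$ in $\param$---makes explicit a step the paper leaves to the reader, so your argument is in fact slightly more complete on this point.
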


\begin{proof}
  Let $\Omega_0=\Omega^2$  endowed with  the product  $\sigma$-field and
  the product  probability measure  $\P_0$, and $X$  (resp. $Y$)  be the
  projection on  the first  (resp. second)  coordinate. Thus  the random
  variables  $X$ and  $Y$  are  independent, $(\Omega,\cf)$-valued  with
  distribution $\P$.   Write $(X', Y')$  for $(X,Y)$ when  considered as
  $(\Omega,\cg)$-valued random variables. Notice  that $X'$ and $Y'$ are
  by construction  independent with  distribution $\P'$, where  $\P'$ is
  the restriction of $\P$ to  $\cg$. As $\gamma$ is $\cg$-measurable and
  $k$  is  $\cg\otimes  \cg$-measurable,   we  can  consider  the  model
  $\param'=[(\Omega, \cg, \P'), k, \gamma]$. Then $(X,X')$ and $(Y, Y')$
  are   two   trivial   couplings  such   that   $k(X,Y)=k(X',Y')$   and
  $\gamma(X)=\gamma(X')$.  Thus the  models $\param$  and $\param'$  are
  coupled.       We have that        $\eta\in         \Delta$         and
  $\eta'=\E[\eta|\, \cg]\in  \Delta'$ are coupled through  $\eta\circ X$
  since  $\E_0[\eta\circ      X|\,\sigma(X)]=\eta\circ      X$      and
  $\E_0[\eta\circ        X|\,\sigma(X')]=\eta'\circ        X'$        as
  $\sigma(X')=X^{-1}(\cg)$ and $X=X'$ can be seen as the identity map on
  $\Omega$.       The       conclusion      then       follows      from
  Proposition~\ref{prop:coupling-optim}.
\end{proof}

\subsection{Examples of couplings}
\label{sec:exple-couple}
In this section, we consider the SIS model as the kernel model can be handled in the same
way. We denote by $\mathrm{Leb}$ the Lebesgue measure.

\subsubsection{Discrete and continuous models}
\label{sec:dis-cont}

We now formalize how finite population models can be seen as particular cases of models
with a continuous population. Let $\Omega_\mathrm{d}\subset \N$,~$\cf_\mathrm{d}$ the set
of subsets of~$\Omega_\mathrm{d}$ and~$\mu_\mathrm{d}$ a probability measure
on~$\Omega_\mathrm{d}$. Without loss of generality, we can assume that
$\mu_\mathrm{d}(\{\ell\})>0$ for all $\ell\in \Omega_\mathrm{d}$. We
set~$\Omega_\mathrm{c}=[0, 1)$, with~$\cf_\mathrm{c}$ its Borel~$\sigma$-field
and~$\mu_\mathrm{c}=\mathrm{Leb}$. Let $(B_\ell, \ell\in\Omega_\mathrm{d})$ be a partition
of $[0,1)$ in measurable sets such that $\mathrm{Leb}(B_\ell) = \mu_\mathrm{d}(\{\ell\})$
for all $\ell\in \Omega_\mathrm{d}$. The measure $\pi$ on $\Omega_\mathrm{d}\times
\Omega_\mathrm{c}$ uniquely defined by:
\[
  \pi( \{\ell\} \times A) = \mathrm{Leb}(B_\ell \cap A)
\]
for all measurable $A\subset[0,1)$ and $\ell\in \Omega_\mathrm{d}$ is clearly a coupling
of $\mu_\mathrm{d}$ and $\mu_\mathrm{c}$. If the kernels $k_\mathrm{d}$ on
$\Omega_\mathrm{d}$ and $k_\mathrm{c}$ on $\Omega_\mathrm{c}$ and the functions
$\gamma_\mathrm{d}$ and $\gamma_\mathrm{c}$ are related through the formula:
\[
  \gamma_\mathrm{c}(x)=\gamma_\mathrm{d}(\ell)
  \quad\text{and}\quad
  k_\mathrm{c}(x,y) = k_\mathrm{d}(\ell,j), \quad \text{for $x\in
    B_\ell$, $y \in B_j$ and $\ell, j\in \Omega_\mathrm{d}$},
\]
then the discrete model~$\param_\mathrm{d}=[(\Omega_\mathrm{d}, \cf_\mathrm{d},
\mu_\mathrm{d}), k_\mathrm{d}, \gamma_\mathrm{d} ]$ and the continuous
model~$\param_\mathrm{c}=[([0,1), \cf_\mathrm{c},\mathrm{Leb}), k_\mathrm{c},
\gamma_\mathrm{c}]$ are coupled. Roughly speaking, we can blow up the atomic part of the
measure~$\mu_\mathrm{d}$ into a continuous part, or, conversely, merge all points that
behave similarly for $k_\mathrm{c}$ and $\gamma_\mathrm{c}$ into an atom, without altering
the Pareto frontier.

\begin{example}\label{ex:sbm2}
  We consider the  so called stochastic block model, with
  2  populations for simplicity, in  the setting  of  the SIS  model, and  give in  this
  elementary case the corresponding  discrete and  continuous models. Then, we explicit
  the  relation  with the  formalism  of  the  same model  developed  in
  \cite{lajmanovich1976deterministic}   by    Lajmanovich   and   Yorke.
  \medskip

  The discrete SIS model is defined on $\Omega_\mathrm{d}=\{1,2\}$ with the probability
  measure $\mu_\mathrm{d}$ defined by $\mu_\mathrm{d}(\{1\})=1-\mu_\mathrm{d}(\{2\})=p$
  with $p\in (0,1)$, and a kernel~$k_\mathrm{d}$ and recovery function~$\gamma_\mathrm{d}$
  given by the matrix and the vector:
  \[
    k_\mathrm{d}=\begin{pmatrix} k_{11} & k_{12}\\
    k_{21} & k_{22}
  \end{pmatrix}
  \quad\text{and}\quad
  \gamma_\mathrm{d}=\begin{pmatrix} \gamma_1\\ \gamma_2 \end{pmatrix}.
  \]
  Notice $p$  is the  relative size of  population 1.  The corresponding
  discrete                            model                           is
  $\param_\mathrm{d}                                                   =
  [(\{1,2\},\cf_\mathrm{d},\mu_\mathrm{d}),k_\mathrm{d},\gamma_\mathrm{d}]$;
  see Figure~\ref{fig:discrete-model}. \medskip

  The continuous model is defined on the state space~$\Omega_\mathrm{c}=[0, 1)$ is endowed
  with its Borel~$\sigma$-field,~$\cf_\mathrm{c}$, and the Lebesgue
  measure~$\mu_\mathrm{c}=\mathrm{Leb}$. The segment~$[0,1)$ is partitioned into two
  intervals~$B_1=[0,p)$ and~$B_2=[p, 1)$, the transmission kernel~$k_\mathrm{c}$ and
  recovery rate $\gamma_\mathrm{c}$ are given by:
  \[
    k_\mathrm{c}(x,y) = k_{ij}
    \quad\text{and}\quad
    \gamma_\mathrm{c}(x)= \gamma_i
    \quad \text{for $x\in B_i$, $y\in B_j$, and  $i,j\in \{1, 2\}$}.
  \]
  The corresponding continuous model is $\param_\mathrm{c} = [([0,
  1),\cf_\mathrm{c},\mathrm{Leb}),k_\mathrm{c},\gamma_\mathrm{c}]$; see
  Figure~\ref{fig:continuous-model}. By the general discussion above, the discrete and
  continuous models are coupled, and in particular they have the same Pareto and
  anti-Pareto frontiers.

  Furthermore, in this simple example, it is easily checked that a discrete vaccination
  $\eta_\mathrm{d}=(\eta_1,\eta_2)$ and a continuous vaccination
  $\eta_\mathrm{c}=(\eta_\mathrm{c}(x), x\in [0, 1))$ are coupled if and only if there
  exists a function $\eta$ defined on $\Omega_\mathrm{c}\times
  \Omega_\mathrm{d}=[0,1)\times\{1,2\}$ such that:
  \[
    \left\{
      \begin{aligned}
	\eta_i &= \frac{1}{\mathrm{Leb}(B_i)}\int_{B_i} \eta(x,i)
	\, \mathrm{d}x, \quad i\in \{1,2\}, \\
	\eta_c(x) &= \eta(x,1)\mathds{1}_{B_1}(x) + \eta(x,2)\mathds{1}_{B_2}(x),\quad \mathrm{Leb}\text{-a.s.,}
      \end{aligned}
    \right.
  \]
  which occurs if and only if:
  \[
    \eta_i = \frac{1}{\mathrm{Leb}(B_i)}\int \eta_c(x) \mathds{1}_{B_i}(x)
    \, \mathrm{d} x, \quad i\in \{1,2\}.
  \]
  Therefore, in this case, the optimal strategies of the continuous model are easily
  deduced from the optimal strategies of the discrete model.

  \medskip

  To conclude this example, we rewrite, using the formalism of the discrete model
$\param_\mathrm{d}$,  the next-generation matrix~$K$ in the setting of
  \cite{lajmanovich1976deterministic}, and the effective next-generation matrix~$K_e(\eta)$ when the
  vaccination strategy~$\eta$ is in force (recall~$\eta_i$ is the proportion of population
  with feature~$i$ which is not vaccinated):
  \[
    K=
    \begin{pmatrix}
      \kk_{11} \,p\, & \kk_{12} \,(1-p) \\
      \kk_{21}\, p\, & \kk_{22} \,(1-p)
    \end{pmatrix}
    \quad\text{and}\quad
    K_e(\eta)
    = \begin{pmatrix}
      \kk_{11} \,p\, \eta_1 & \kk_{12} \,(1-p)\, \eta_2 \\
      \kk_{21}\, p\, \eta_1 & \kk_{22} \,(1-p)\, \eta_2
    \end{pmatrix},
  \]
  with $p=\mu_\mathrm{d}(\{1\})$, $1-p=\mu_\mathrm{d}(\{2\})$ and
  $\kk_\mathrm{d}=k_\mathrm{d}/\gamma_\mathrm{d}$, that is:
  \[
    \kk_\mathrm{d}= \begin{pmatrix}
      \kk_{11} & \kk_{12} \\
      \kk_{21} & \kk_{22}
    \end{pmatrix}
    =
    \begin{pmatrix}
      k_{11}/\gamma_1\, & k_{12}/\gamma_2 \\
      k_{21}/\gamma_1\, & k_{22}/\gamma_2
    \end{pmatrix}.
  \]
\end{example}

\begin{figure}
 \savebox{\largestimage}{\includegraphics[page=1]{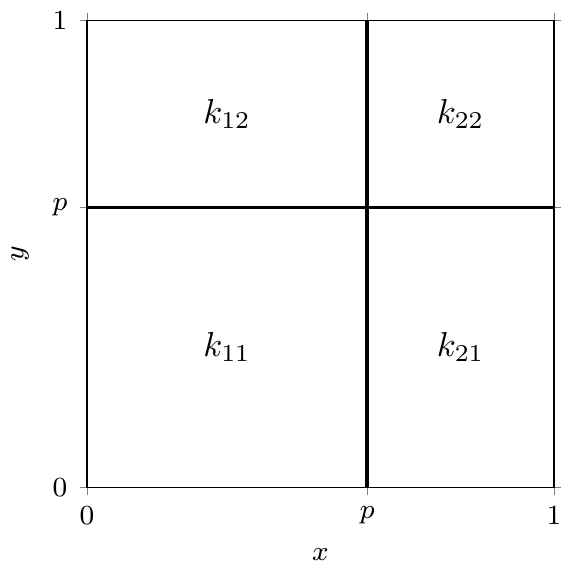}}
 \begin{subfigure}[b]{0.48\textwidth} \centering
 \usebox{\largestimage}
 \caption{Continuous model: kernel~$k_\mathrm{c}$ on~$\Omega_\mathrm{c}=[0,1)$
 with the Lebesgue measure.}
 \label{fig:continuous-model}
 \end{subfigure}
 \quad
 \begin{subfigure}[b]{0.48\textwidth}
 \centering
 \raisebox{\dimexpr.5\ht\largestimage-.5\height}{%
 \includegraphics[page=2]{equivalent}}
 \caption{Discrete model: kernel~$k_\mathrm{d}$ on
 $\Omega_\mathrm{d}=\{1, 2\}$ with the measure
 $p\delta_1+ (1-p)\delta_2$.}
 \label{fig:discrete-model}
 \end{subfigure}
 \caption{Coupled continuous model (left) and discrete
   model (right).}
 \label{fig:discrete-and-continuous}
\end{figure}

\subsubsection{Measure preserving function}

This section is motivated by the theory of graphons, which are indistinguishable by measure
preserving transformation, see \cite[Sections~7.3 and~10.7]{lovasz_large_2012}.
Let~$(\Omega, \cf, \mu)$ be a measurable space. We say a measurable function~$\varphi$
from $(\Omega, \cf)$ to itself is \emph{measure preserving} if~$\mu= \varphi _\# \mu$. For
example the function~$\varphi:x\mapsto 2x
\mod(1)$ defined on the probability space~$([0, 1], \cb([0, 1],
\mathrm{Leb})$ is measure
preserving.

Let~$\varphi$ be measure preserving function on $\Omega$. Let $k_1$ be a kernel and
$\gamma_1$ a function on
$\Omega$ such that the model $\param_1=[(\Omega, \cf, \mu), k_1,
\gamma_1]$ satisfies Assumption \ref{hyp:k-g}. Let $X_1$ be a random
variable  with probability distribution $\mu$ and let $X_2
=\varphi(X_1)$, so that $(X_1, X_2)$ is a coupling of $(\Omega, \cf, \mu)$
with itself. Then for the kernel $k_2$ and the function $\gamma_2$
defined by:
\[
  k_2(x,y)= k_1(\varphi(x),\varphi(y))
  \quad\text{and}\quad
  \gamma_2(x)=\gamma_1 (\varphi(x)),
\]
the models $\param_1$ and~$\param_2=[(\Omega, \cf, \mu), k_2, \gamma_2]$ are coupled.
Roughly speaking, we can give different labels to the features of the population without
altering the Pareto and anti-Pareto frontiers.

\subsubsection{Model reduction using deterministic coupling}
\label{sec:exple-esp-cond}
This example is in the spirit of  Section~\ref{sec:dis-cont}, where one
merges individual with identical behavior. We consider a SIS model 
$\param_1=[(\Omega_1,\cf_1,\mu_1),k_1,\gamma_1]$. Let $\varphi$ be a
  measurable function from $(\Omega_1, \cf_1)$ to $(\Omega_2, \cf_2)$. Assume that:
  \[
    \sigma(\gamma_1)\subset \sigma(\varphi) \quad\text{and}\quad \sigma(k_1)\subset
    \sigma(\varphi)\otimes \sigma(\varphi).
  \]

  We can then build an elementary coupling. Let $X_1$ and $Y_1$ be independent $\mu_1$
  distributed random elements of $\Omega_1$, and set $(X_2,Y_2) = (\varphi(X_1),
  \varphi(Y_1))$. Since $\sigma(\gamma_1)\subset \sigma(\varphi)$ and $ \sigma(k_1)\subset
  \sigma(\varphi)\otimes \sigma(\varphi)$, we get that $\gamma_1(X_1)$ is
  $\sigma(X_2)$-measurable and $k_1(X_1,Y_1)$ is $\sigma(X_2,Y_2)$-measurable. According
  to \eqref{eq:def-f-phi0}, there exists two measurable functions $\gamma_2:\Omega_2\to\R$
  and $k_2:\Omega_2\times\Omega_2\to\R$ such that $\gamma_1=\gamma_2 \circ \varphi$ and
  $k_1=k_2(\varphi\otimes \varphi)$ that is almost surely:
  \[
    \gamma_1(X_1)=\gamma_2(X_2) \quad\text{and}\quad k_1(X_1,Y_1) = k_2(X_2,Y_2).
  \]
  Let $\mu_2=\varphi_\# \mu_1$ be the push-forward measure of $\mu_1$ by $\varphi$. Using
  \eqref{eq:push} it is easy to check that the integrability condition from Assumption
  \ref{hyp:k-g} is fulfilled, so we can consider the reduced model
  $\param_2=[(\Omega_2,\cf_2,\mu_2),k_2,\gamma_2]$. By Definition \ref{def:couple},
  $\param_1$ is coupled with $\param_2 $ through the (deterministic) coupling $\pi$ given
  by the distribution of $(X_1, \varphi(X_1))$. \medskip

  
  Eventually, we get from Corollary \ref{cor:couplage}
  with $\cg=\sigma(\varphi)$, that $\eta_1\in \Delta_1$ is Pareto optimal if and only if
  $\E_1[\eta_1|\,\varphi]$ is Pareto optimal (for the model $\param_1$), where $\E_1$
  correspond to the expectation with respect to the probability measure
  $\mu_1$ on $(\Omega_1, \cf_1)$.

  \section{Technical proofs}
  \label{sec:technical_proofs}
\subsection{The SIS model: properties of~$\I$ and of the maximal
  equilibrium}
\label{sec:proof-I}
We prove here Theorem~\ref{th:continuity-I} and Proposition~\ref{prop:I-stab}, and
properties of the maximal equilibrium.
For the convenience of the reader, we only use references to the results recalled in
\cite{delmas_infinite-dimensional_2020} for positive operators on Banach spaces. For an
operator $A$, we denote by $A^\top$ its adjoint. We first give a preliminary
lemma.

\begin{lemma}\label{lem:prelim-result}
  Suppose Assumption~\ref{hyp:k-g} holds, and consider the positive bounded linear
  integral operator $\Tinf_{k/\gamma}$ on $\cl$. If there exists $g\in
  \mathscr{L}^\infty_+ $, with $\int_\Omega g \, \mathrm{d} \mu >0$ and $\lambda > 0$
  satisfying:
  \[
    \Tinf_{k/\gamma}(g)(x) > \lambda g(x), \quad \text{for all} \quad x \text{ such that }
  g(x)>0, \]
  then we have $\rho(\Tinf_{k/\gamma})>\lambda$.
\end{lemma}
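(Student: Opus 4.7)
The plan is to restrict the operator to the support of $g$ and derive the strict spectral bound through a Krein--Rutman duality argument there, since the pointwise strict inequality cannot in general be upgraded to a uniform one on $\Omega$.

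First I would set $A = \{g > 0\}$, which has positive measure since $\int g \, \rd \mu > 0$, and introduce the positive compact integral operator $T_A$ on $L^p(A, \mu)$ with kernel the restriction of $\kk$ to $A \times A$; compactness is inherited from that of $T_\kk$ because the restricted kernel still has finite double norm. Two observations are then immediate. Since $g$ vanishes outside $A$, we have $T_A(g|_A)(x) = \Tinf_\kk g(x)$ for every $x \in A$, so the hypothesis reads $T_A(g|_A) > \lambda\, g|_A$ pointwise on $A$, with $g|_A > 0$ throughout; iterating the inequality $T_A^n(g|_A) \geq \lambda^n g|_A$ and taking $L^p(A)$-norms yields the Collatz--Wielandt bound $\rho(T_A) \geq \lambda > 0$. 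Moreover, the pointwise domination $T_A^n f \leq T_\kk^n f$ on $A$ for $f \in L^p(A)_+$ extended by zero gives $\rho(T_A) \leq \rho(T_\kk) = \rho(\Tinf_\kk)$.

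The heart of the argument is to rule out $\rho(T_A) = \lambda$. Assume it holds. Since $T_A^\top$ is a compact positive operator on the reflexive space $L^q(A)$ with strictly positive spectral radius, the Krein--Rutman theorem (as recalled in \cite{delmas_infinite-dimensional_2020}) yields a nonzero $w \in L^q(A)_+$ with $T_A^\top w = \lambda w$. Pairing with $g|_A$ produces
\[
\int_A w \bigl( T_A(g|_A) - \lambda\, g|_A \bigr) \, \rd \mu = \langle g|_A, T_A^\top w - \lambda w \rangle = 0.
\]
However the integrand is non-negative on $A$, and on the positive-measure set $\{w > 0\} \subset A$ both factors are strictly positive, forcing the integral to be strictly positive --- a contradiction. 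Hence $\rho(T_A) > \lambda$, and combining with the domination step gives $\rho(\Tinf_\kk) > \lambda$.

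The main obstacle I foresee is precisely what motivates the restriction: the pointwise strict inequality $\Tinf_\kk g > \lambda g$ on $A$ need not imply any uniform gap $\Tinf_\kk g \geq (\lambda + \varepsilon) g$, so naive iteration on $\Tinf_\kk$ itself only recovers $\rho \geq \lambda$. Compactness of the restricted operator is essential for Krein--Rutman to apply, and working with $T_A$ rather than $\Tinf_\kk$ is what guarantees that the dual eigenvector $w$ is supported in $A$ --- exactly where the strict inequality is available.
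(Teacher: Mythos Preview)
Your proposal is correct and follows essentially the same approach as the paper: restrict to the support $A=\{g>0\}$, use Collatz--Wielandt to get $\rho \geq \lambda$ for the restricted operator, pair $g$ against a left Perron eigenvector to upgrade to a strict inequality, and conclude by monotonicity of the spectral radius. The only cosmetic difference is that the paper works with $\Tinf' f = \mathds{1}_A \Tinf(\mathds{1}_A f)$ on $\cl$ rather than $T_A$ on $L^p(A,\mu)$, and computes directly $(\rho(\Tinf')-\lambda)\langle v,g\rangle = \langle v,\Tinf'g-\lambda g\rangle > 0$ instead of phrasing the last step as a contradiction from $\rho(T_A)=\lambda$.
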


\begin{proof}
  Set $\Tinf=\Tinf_{k/\gamma}$. Let $A =\set{ g>0}$ be the support of the function $g$.
  Let $\Tinf'$ be the bounded operator defined by $\Tinf'(f) =\mathds{1}_A
  \Tinf(\mathds{1}_A f)$. Since $\Tinf'(g)= \mathds{1}_A
  \Tinf(\mathds{1}_A g) = \mathds{1}_A \Tinf(g) > \lambda g$, we
  deduce from the Collatz-Wielandt formula, see
  \cite[Proposition~3.6]{delmas_infinite-dimensional_2020}, that $\rho(\Tinf')
  \geq \lambda>0$. According to \cite[Lemma~3.7~(v)]{delmas_infinite-dimensional_2020},
  there exists $v\in L^q_+ \setminus \{ 0 \}$, seen as an element of the topological dual
  of $\cl$, a left Perron eigenfunction of $\Tinf'$, that is such that $
  (\Tinf')^\top(v) =\rho(\Tinf') v$. In particular, we have $v= \mathds{1}_A
  \, v$ and thus $\int_A v \, \mathrm{d}\mu > 0$ and $\int_\Omega vg\, \mathrm{d}\mu >
  0$. We obtain:
  \[
    (\rho(\Tinf') - \lambda) \braket{v,g} = \braket{v, \Tinf'(g) -
    \lambda g} > 0.
  \]
  This implies that $\rho(\Tinf') > \lambda$. Since $\Tinf- \Tinf'$ is a positive
  operator, we deduce from \eqref{eq:r(A)r(B)} that $\rho(\Tinf) \geq \rho(\Tinf') >
  \lambda$.
\end{proof}

We now state an interesting result on the characterization of the maximal
equilibrium~$\mathfrak{g}$. We keep notations from Sections \ref{sec:dyn} and
\ref{sec:vacc} and write $R_e$ for $R_e[k/\gamma]$. Recall that $R_0=R_e(\un)$ and $F$
defined by \eqref{eq:vec-field}. Let $DF[h]$ denote the bounded linear operator on $\cl$
of the derivative of the map $f \mapsto F(f)$ defined on $\cl$ at point $h$:
\[
  DF[h](g) = (1-h) \Tinf_k(g) - (\gamma + \Tinf_{k}(h))g
  \quad \text{for $h,g\in \cl$.}
\]

Let $s(A)$ denote the spectral bound of the bounded operator $A$, see (33) in
\cite{delmas_infinite-dimensional_2020}.

\begin{proposition}\label{prop:caract-g}
  Suppose Assumption~\ref{hyp:k-g} holds and write $R_e$ for $R_e[k/\gamma]$. Let $h$ in
  $\Delta$ be an equilibrium, that is $F(h) = 0$. The following properties are equivalent:
  \begin{propenum}
  \item\label{h=g} $h=\mathfrak{g}$,
  \item\label{s(g)<0} $s(DF[h]) \leq 0$,
  \item\label{Rh2<1} $R_e((1-h)^2)\leq 1$.
  \item\label{Rh<1} $R_e(1-h)\leq 1$.
  \end{propenum}
  We also have: $\mathfrak{g}=0 \Longleftrightarrow R_0\leq 1$; as well as:
  $\mathfrak{g}\neq 0\Longrightarrow R_e(1-\mathfrak{g})=1$.
\end{proposition}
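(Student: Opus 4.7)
My plan is to establish the four equivalences via a central transformation of the SIS dynamics around an equilibrium $h$, combined with a block decomposition to treat the reducible case. Note first that any equilibrium $h$ satisfies $h < 1$ a.e.: at a point where $h(x)=1$, the equilibrium equation $(1-h)\Tinf_k(h) = \gamma h$ would give $0 = \gamma(x) > 0$. Hence $1/(1-h)$ is everywhere defined.

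\textbf{Central transformation and proof of (i) $\iff$ (iii).} Substituting $u_t = h + (1-h) v_t$ in~\eqref{eq:SIS-intro} and using $F(h) = 0$, a direct computation shows that $v_t$ solves a new SIS equation with kernel $k(1-h)$ and recovery rate $\gamma/(1-h)$. Its basic reproduction number equals
\[
  R_0^{\mathrm{new}} = \rho(\Tinf_{k(1-h)^2/\gamma}) = R_e((1-h)^2),
\]
by~\eqref{eq:rhoT=rhoT} and~\eqref{eq:r(AB)}. The affine map $v \mapsto h+(1-h)v$ is a monotone bijection between equilibria of the new system in $\Delta$ and equilibria of the original system dominating $h$. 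Therefore, if $R_e((1-h)^2) \leq 1$, the new system has only the trivial maximal equilibrium, forcing $\mathfrak{g} = h$ (since $\mathfrak{g}$ is an equilibrium above $h$). Conversely, if $h=\mathfrak{g}$ and $R_e((1-\mathfrak{g})^2)>1$, the non-zero maximal equilibrium $v^*$ of the new system yields $u^* = \mathfrak{g} + (1-\mathfrak{g})v^* > \mathfrak{g}$, contradicting maximality.

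\textbf{Proof of (ii) $\iff$ (iii).} Differentiating the change of variables gives $DF[h] = M_{1-h}\, DF_{\mathrm{new}}[0]\, M_{1/(1-h)}$, where $DF_{\mathrm{new}}[0] = \Tinf_{k(1-h)} - M_{\gamma/(1-h)}$ is the linearization of the new vector field at $0$; hence $s(DF[h]) = s(DF_{\mathrm{new}}[0])$. The classical positive-operator identity relating $s(\Tinf_{k'} - M_{\gamma'})$ to $\rho(\Tinf_{k'/\gamma'}) - 1$ (underlying the characterization in~\cite{delmas_infinite-dimensional_2020} of $R_0 \leq 1$ as stability of the disease-free equilibrium), applied to $k' = k(1-h)$ and $\gamma' = \gamma/(1-h)$, yields $s(DF_{\mathrm{new}}[0]) \leq 0 \iff R_e((1-h)^2) \leq 1$.

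\textbf{Proof of (i) $\iff$ (iv), and $R_e(1-\mathfrak{g}) = 1$ when $\mathfrak{g}\neq 0$.} The direction (iv) $\Rightarrow$ (iii) is immediate from $(1-h)^2 \leq 1-h$ and monotonicity of $R_e$ (Proposition~\ref{prop:R_e}\ref{prop:increase}). For (i) $\Rightarrow$ (iv): if $\mathfrak{g}=0$ then $R_e(\un)=R_0 \leq 1$. If $\mathfrak{g}\neq 0$, the equilibrium equation forces $\Tinf_k(\mathfrak{g})(x) = 0$ on $A^c := \{\mathfrak{g}=0\}$, so $\kk(A^c, A)=0$ where $A=\{\mathfrak{g}>0\}$; hence $A$ is $\kk$-invariant and $T_{\kk(1-\mathfrak{g})}$ is block upper-triangular on $L^p(A)\oplus L^p(A^c)$, giving
\[
  R_e(1-\mathfrak{g}) = \max\bigl(\rho(T_{A \to A}),\ \rho(T_{A^c \to A^c})\bigr).
\]
If $\rho(T_{A^c \to A^c}) > 1$, the SIS restricted to $A^c$ admits a non-trivial maximal equilibrium $\mathfrak{g}^{A^c}$; a direct check using $\kk(A^c,A)=0$ shows $u = \mathfrak{g}+\mathfrak{g}^{A^c}$ (extended by $0$) satisfies $F(u)\geq 0$, so Lemma~\ref{lem:Fh>0} gives $u\leq\mathfrak{g}$, contradicting $u>\mathfrak{g}$ on $A^c$. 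On $A$, $v := \gamma\mathfrak{g}/(1-\mathfrak{g})$ is strictly positive and satisfies $T_{A \to A}(v) = v$; pairing $v$ with a non-negative Krein--Rutman eigenvector of the adjoint $T_{A\to A}^\top$ (which has strictly positive pairing with $v$ since $v>0$ a.e.\ on $A$) forces $\rho(T_{A\to A}) = 1$, hence $R_e(1-\mathfrak{g})=1$.

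\textbf{Main obstacle.} The most delicate step is (ii) $\iff$ (iii): $DF[h]$ may be unbounded when $h$ approaches $1$, so the spectral-bound analysis requires passing through the similarity with the better-behaved $DF_{\mathrm{new}}[0]$ and invoking the positive-operator spectral identity from~\cite{delmas_infinite-dimensional_2020}. A secondary subtlety is the reducible case in Step (i) $\Rightarrow$ (iv), where the adjoint-pairing argument relies on the Krein--Rutman existence result for the compact positive operator $T_{A\to A}^\top$ on $L^q(A)$.
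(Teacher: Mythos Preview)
Your approach differs substantially from the paper's and is largely correct, but the step (ii)~$\iff$~(iii) contains a genuine gap, and your ``Main obstacle'' paragraph misidentifies where it lies.

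\textbf{The gap.} The operator $DF[h] = M_{1-h}\Tinf_k - M_{\gamma+\Tinf_k(h)}$ is \emph{always} bounded on $\cl$: each factor is bounded under Assumption~\ref{hyp:k-g}. What may be unbounded is the conjugating operator $M_{1/(1-h)}$. Indeed $1/(1-h) = 1 + \Tinf_k(h)/\gamma$, and since $\gamma$ is only assumed positive (not bounded away from~$0$), this need not lie in $\cl$. Consequently your identity $DF[h] = M_{1-h}\, DF_{\mathrm{new}}[0]\, M_{1/(1-h)}$ is a formal similarity through a possibly unbounded operator, and the conclusion $s(DF[h]) = s(DF_{\mathrm{new}}[0])$ is not justified. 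The paper avoids this entirely by applying \cite[Proposition~4.2]{delmas_infinite-dimensional_2020} \emph{directly} to $DF[h]$, obtaining $s(DF[h])\leq 0 \iff \rho(\Tinf_\kk)\leq 1$ with $\kk(x,y) = (1-h(x))k(x,y)/(\gamma(y)+\Tinf_k(h)(y))$, and then using $F(h)=0$ to rewrite this kernel as $(1-h)k(1-h)/\gamma$. The fix for your argument is the same: apply the cited spectral identity directly to $DF[h]$ rather than to $DF_{\mathrm{new}}[0]$, and drop the similarity step.

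\textbf{Comparison of the remaining steps.} Your (i)~$\iff$~(iii) via the change of variables $u = h + (1-h)v$ is correct and more conceptual than the paper's route: the paper instead proves (iii)~$\Rightarrow$~(i) by an explicit computation showing that $\gamma(\mathfrak g - h)/(1-h)$ is a strict super-eigenvector of $\Tinf_\kk$ when $h\neq\mathfrak g$, invoking Lemma~\ref{lem:prelim-result}. Your argument packages this as ``the transformed SIS has trivial maximal equilibrium iff $R_0^{\mathrm{new}}\leq 1$'', which is cleaner (and one should note that the new recovery rate $\gamma' = \gamma + \Tinf_k(h)$ is bounded, so Assumption~\ref{hyp:k-g} does hold for the transformed system). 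For (i)~$\Rightarrow$~(iv), the paper argues by contradiction: assuming $R_e(1-h)>1$, it uses a left Perron eigenvector to locate a subkernel on $\{h=0\}$ with spectral radius~$>1$, then builds an explicit perturbation $h+\varepsilon w$ with $F(h+\varepsilon w)\geq 0$ and appeals to Lemma~\ref{lem:Fh>0}. Your block decomposition on $A=\{\mathfrak g>0\}$ is a different and equally valid route; it has the advantage of yielding $R_e(1-\mathfrak g)=1$ directly rather than as a separate afterthought, whereas the paper obtains the inequality $R_e(1-\mathfrak g)\geq 1$ by exhibiting $\Tinf_k(\mathfrak g)$ as a fixed point of $\Tinf_{k(1-\mathfrak g)/\gamma}$.
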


\begin{proof}
  Let $h \in \Delta$ be an equilibrium, that is $F(h)=0$.

  \medskip

  Let us show the equivalence between \ref{s(g)<0} and \ref{Rh2<1}. According to
  \cite[Proposition~4.2]{delmas_infinite-dimensional_2020}, $s(DF[ h]) \leq 0$ if and only
  if:
  \[
    \rho\left(\Tinf_\kk\right) \leq 1 \quad \text{with} \quad
    \kk(x,y) = (1 - h(x))\frac{ k(x,y) }{\gamma(y) + \Tinf_k(h)(y)}\cdot
  \]
  Since $F(h)=0$, we have $(1-h)/\gamma= 1/(\gamma+ \Tinf_k(h))$. This gives:
  \begin{equation}\label{eq:Tk-Fh}
    \mathsf{k}(x,y) = (1 - h(x))\frac{ k(x,y) (1-h(y))}{\gamma(y)}
  \end{equation}
  and thus $\Tinf_\kk = M_{1-h}\, \Tinf_{k/\gamma} \, M_{1-h}$, where $M_f$ is the
  multiplication operator by $f$. Recall the definition~\eqref{eq:def-R_e} of $R_e$.
  According to~\eqref{eq:r(AB)}, we have:
  \begin{equation}\label{eq:commute} \rho\left(\Tinf_\kk\right) =
    \rho\left(\Tinf_{k/\gamma} M_{(1-h)^2} \right) = R_e((1-h)^2).
  \end{equation}
  This gives the equivalence between \ref{s(g)<0} and \ref{Rh2<1}.

  \medskip

  We prove that \ref{h=g} implies \ref{Rh<1}. Suppose that $R_e(1-h)>1$. Thanks
  to~\eqref{eq:r(AB)}, we have $\rho(M_{1-h} \Tinf_{k/\gamma})=
  \rho(\Tinf_{k/\gamma}M_{1-h})= R_e(1-h)>1$. According to
  \cite[Lemma~3.7~(v)]{delmas_infinite-dimensional_2020}, there exists $v\in L^q_+
  \setminus \{ 0 \}$ a left Perron eigenfunction of $\Tinf_{(1-h)k/\gamma}$, that is $
  \Tinf_{(1-h)k/\gamma}^\top(v) = R_e(1-h) v$. Using $F(h)=0$, and thus
  $(1-h) \Tinf_k(h)=\gamma h$, for the last equality, we have:
  \begin{equation*}
    R_e(1-h) \braket{v,\gamma h} = \braket{v,
    (1-h)\Tinf_{k/\gamma}(\gamma h)} = \braket{v, \gamma h}.
  \end{equation*}
  We get $\braket{v,\gamma h}=0 $ and thus $\braket{v,\mathds{1}_A} = 0$, where
  $A=\set{h>0}$ denote the support of the function $h$. Since $
  \Tinf_{(1-h)k/\gamma}^\top(v) =
  R_e(1-h) v$ and setting $v'=(1-h)v$ (so that $v'=v$ $\mu$-almost surely on $A^c$), we
  deduce that:
  \begin{equation*}
    \Tinf_{k'/\gamma}^\top(v') = R_e(1-h) v',
  \end{equation*}
  where $k'=\mathds{1}_{A^c}\, k\, \mathds{1}_{A^c}$. This implies that $\rho(
  \Tinf_{k'/\gamma})\geq R_e(1-h)$. Since $k'=(1-h)k'$ and $\Tinf_{k/\gamma}-
  \Tinf_{k'/\gamma}$ is a positive operator as $k-k'\geq 0$, we get,
  using~\eqref{eq:r(A)r(B)} for the inequality, that $ \rho(\Tinf_{k'/\gamma})= \rho (
  M_{1-h} \Tinf_{k'/\gamma}) \leq \rho ( M_{1-h}\Tinf_{k/\gamma})= R_e(1-h)$. Thus, the
  spectral radius of $\Tinf_{k'/\gamma}$ is equal to $R_e(1-h)$. According to
  \cite[Proposition~4.2]{delmas_infinite-dimensional_2020}, since
  $\rho(\Tinf_{k'/\gamma})>1$, there exists $w \in \mathscr{L}^\infty_+\setminus \{ 0 \}$
  and $\lambda>0$ such that:
  \begin{equation*}
    \Tinf_{k'}(w) - \gamma w = \lambda w.
  \end{equation*}
  This also implies that $w=0$ on $A=\set{h>0}$, that is $wh=0$ and thus $w \Tinf_k(h)=0$
  as $\Tinf_k (h) =\gamma h/(1-h)$. Using that $F(h)=0$,
  $\Tinf_k(w)=\Tinf_{k'}(w)=(\gamma+\lambda) w$ and $h\Tinf_k(w)=0$, we obtain:
  \begin{equation*}
    F(h+w) = w(\lambda - \Tinf_k(w)) .
  \end{equation*}
  Taking $\varepsilon>0$ small enough so that $\varepsilon \Tinf_{k }(w) \leq \lambda/2$
  and $\varepsilon w \leq 1$, we get $h+\varepsilon w\in
  \Delta$ and $ F( h +\varepsilon
  w) \geq 0$. Then use
  Lemma \ref{lem:Fh>0} to deduce that $h+\varepsilon w \leq \mathfrak{g}$ and thus
  $h \neq \mathfrak{g} $.

  \medskip

  To see that \ref{Rh<1} implies \ref{Rh2<1}, notice that $(1-h)^2\leq (1-h)$, and then
  deduce from Proposition~\ref{prop:R_e}~\ref{prop:increase} that $R_e((1-h)^2) \leq
  R_e(1-h)$.

  \medskip

  We prove that \ref{Rh2<1} implies \ref{h=g}. Notice that $F(g)=0$ and $g\in \Delta$
  implies that $g<1$. Assume that $h \neq \mathfrak{g}$. Notice that $\gamma/(1-h)= \gamma
  + \Tinf_{k}(h)$, so that $\gamma (\mathfrak{g} - h)/(1 - h) \in \cl_+$. An elementary
  computation, using $F(h)=F(\mathfrak{g})=0$ and $\kk$ defined in~\eqref{eq:Tk-Fh},
  gives:
  \[
    \Tinf_\kk \left( \gamma \frac{\mathfrak{g} - h}{1 - h}\right)
    = (1-h)\Tinf_{k}\left(\mathfrak{g} - h \right)
    = \gamma
    \frac{\mathfrak{g} - h}{1 - \mathfrak{g}}
    =\frac{1-h}{1 - \mathfrak{g}} \, \gamma
    \frac{\mathfrak{g} - h}{1 - h} \cdot
  \]
  Since $h \neq \mathfrak{g}$ and $h\leq \mathfrak{g}$, we deduce that $(1-h)/ (1 -
  \mathfrak{g})\geq 1$, with strict inequality on $\set{ \mathfrak{g}-h>0}$ which is a set
  of positive measure. We deduce from Lemma~\ref{lem:prelim-result} (with $k$ replaced by
  $\kk \gamma$) that $\rho\left(\Tinf_{\mathsf{k}}\right)>1$. Then use~\eqref{eq:commute}
  to conclude.

  \medskip

  To conclude notice that $\mathfrak{g}=0 \Longleftrightarrow R_0\leq 1$ is a consequence
  of the equivalence between \ref{h=g} and \ref{Rh<1} with $h=0$ and $R_0=R_e(\un)$.

  Using that $F(\mathfrak{g})=0$, we get $\Tinf_k(\mathfrak{g})= \gamma
  \mathfrak{g}/(1-\mathfrak{g})$. We deduce that
  $\Tinf_{k(1-\mathfrak{g})/\gamma}(\Tinf_k(\mathfrak{g})) = \Tinf_k(\mathfrak{g})$. If
  $\mathfrak{g}\neq 0$, we get $\Tinf_k(\mathfrak{g})\neq 0$ (on a set of positive
  $\mu$-measure). This implies that $R_e(1-\mathfrak{g})\geq 1$. Then use \ref{Rh<1} to
  deduce that $R_e(1-\mathfrak{g})=1$ if $\mathfrak{g}\neq 0$.
\end{proof}

In the SIS model, in order to stress, if necessary, the dependence of a quantity $H$, such
as $F_\eta$, $R_e$ or $\mathfrak{g}_{\eta}$, in the parameters $k$ and $\gamma$ (which
satisfy Assumption~\ref{hyp:k-g}) of the model, we shall write $H[k, \gamma]$. Recall that
if $k$ and $\gamma$ satisfy Assumption~\ref{hyp:k-g}, then the kernel $k/\gamma$ has a
finite double norm on $L^p$ for some $p\in (1, +\infty )$. We now consider the continuity
property of the maps $\eta \mapsto \mathfrak{g}_{\eta}[k, \gamma]$ and $(k,\gamma,\eta)
\mapsto \mathfrak{g}_{\eta}[k, \gamma]$.

\begin{lemma}\label{lem:cvgn2}
  Let $((k_n, \gamma_n), n\in \N)$ and $(k,\gamma)$ be kernels and functions satisfying
  Assumption~\ref{hyp:k-g} and $(\eta_n, \, n \in \N)$ be a sequence of elements of
  $\Delta$ converging weakly to $\eta$.
  \begin{propenum}
  \item\label{lem:cvg2-h} We have $\lim_{n\rightarrow \infty }
    \mathfrak{g}_{\eta_n}[k,\gamma]=\mathfrak{g}_{\eta}[k, \gamma]$ $\mu$-almost surely.
  \item\label{lem:cvg2-kgh} Assume furthermore there exists $p'\in (1,
    +\infty )$ such that $\kk=\gamma^{-1} k$ and $(\kk_n=\gamma^{-1}_n k_n,
    n\in \N)$ have finite double norm on $L^{p'}$ and that $\lim_{n\rightarrow\infty }
    \norm{\kk_n -\kk}_{p',q'}=0$. Then, we have $\lim_{n\rightarrow
    \infty } \mathfrak{g}_{\eta_n}[k_n,\gamma_n]=\mathfrak{g}_{\eta}[k, \gamma]$
    $\mu$-almost surely.
  \end{propenum}
\end{lemma}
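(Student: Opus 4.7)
Focus first on part~\ref{lem:cvg2-h}. Let $h_n=\mathfrak{g}_{\eta_n}[k,\gamma]$ and set $v_n=\Tinf_{k\eta_n}(h_n)=\Tinf_k(\eta_n h_n)$. Rearranging $F_{\eta_n}(h_n)=0$ gives $h_n(\gamma+v_n)=v_n$, so $h_n=v_n/(\gamma+v_n)$ wherever $\gamma+v_n>0$. The plan has four steps: (a) use compactness of $T_k$ to extract a subsequence along which $v_n$ converges in $L^p$ and $\mu$-a.e.; (b) deduce $\mu$-a.e.\ convergence of $h_n$ through the identity above; (c) pass to the limit in the equation $F_{\eta_n}(h_n)=0$; (d) identify the limit as $\mathfrak{g}_\eta[k,\gamma]$ by combining Proposition~\ref{prop:caract-g} with the weak continuity of $R_e$ from Theorem~\ref{th:continuity-R}.

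For step (a), pick $p\in(1,\infty)$ whose conjugate exponent $q$ is furnished by Assumption~\ref{hyp:k-g}; as $\gamma\in\mathscr{L}^\infty_+$, the kernel $k=\gamma\kk$ inherits a finite double norm on $L^p$, so $T_k\colon L^p\to L^p$ is compact. Since $\eta_n h_n\in\Delta$ is bounded in $L^\infty$, the sequence $(v_n)$ lies in a relatively compact subset of $L^p$; extract a subsequence (relabelled) with $v_n\to v$ in $L^p$ and $\mu$-a.e. For step (b), $h_n=v_n/(\gamma+v_n)\to h:=v/(\gamma+v)$ $\mu$-a.e., and dominated convergence (using $0\leq h_n\leq 1$) upgrades this to convergence in $L^p$ as well. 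For step (c), the decomposition $\eta_n h_n-\eta h=\eta_n(h_n-h)+(\eta_n-\eta)h$ shows $\eta_n h_n\to \eta h$ weakly in $L^p$ (first summand strongly in $L^p$, second weakly since $h\in L^\infty$). Compactness of $T_k$ then yields $v_n=T_k(\eta_n h_n)\to T_k(\eta h)=T_{k\eta}(h)$ strongly in $L^p$, so $v=T_{k\eta}(h)$ and $F_\eta(h)=0$.

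For step (d), apply Proposition~\ref{prop:caract-g}~\ref{Rh<1} to the SIS model with transmission kernel $k\eta_n$ to obtain $R_e[k\eta_n/\gamma](1-h_n)=R_e[k/\gamma](\eta_n(1-h_n))\leq 1$. The same decomposition argument shows $\eta_n(1-h_n)\to\eta(1-h)$ weakly in $L^p$, so Theorem~\ref{th:continuity-R} gives $R_e[k/\gamma](\eta(1-h))\leq 1$. Combined with $F_\eta(h)=0$, Proposition~\ref{prop:caract-g} forces $h=\mathfrak{g}_\eta[k,\gamma]$. Since every weakly convergent subsequence of $(h_n)$ has, by the above, a sub-subsequence converging $\mu$-a.e.\ to the same limit $\mathfrak{g}_\eta[k,\gamma]$, a standard subsequence argument delivers the claimed convergence of the whole sequence.

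For part~\ref{lem:cvg2-kgh}, the strategy is identical with the varying operators $T_{k_n}$ replacing $T_k$, and an appeal to Proposition~\ref{prop:Re-stab} in place of Theorem~\ref{th:continuity-R}. The extra ingredient is that $\|\kk_n-\kk\|_{p',q'}\to 0$, combined with the boundedness of the $\gamma_n$, makes the family $\{T_{k_n}\}\cup\{T_k\}$ collectively compact (exactly as in the proof of Proposition~\ref{prop:Re-stab}), so that $v_n=T_{k_n}(\eta_n h_n)$ still lies in a common compact subset of $L^{p'}$ and one may pass to the limit $v_n\to T_{k\eta}(h)$ as before. The main obstacle in both parts is the nonlinearity of the equilibrium equation: passing to the limit in the product $(1-h_n)T_{k_n\eta_n}(h_n)$ requires strong convergence of $h_n$, not merely weak. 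This is obtained \emph{a posteriori} from the identity $h_n=v_n/(\gamma+v_n)$ and the compactness of $T_k$, which is the crux of the argument.
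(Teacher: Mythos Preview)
Your argument for part~\ref{lem:cvg2-h} is essentially the paper's: both rewrite the equilibrium equation as $g_n = v_n/(\gamma+v_n)$ with $v_n$ given by an integral operator applied to $\eta_n g_n$, use compactness to extract a limit, pass to the limit in the equation, and identify the limit via Proposition~\ref{prop:caract-g} together with the weak continuity of $R_e$. The only cosmetic difference is that you invoke compactness of $T_k$ on the image side, whereas the paper invokes weak sequential compactness of $\Delta$ on the argument side. One small point worth tightening: the ``standard subsequence argument'' you quote at the end yields convergence in measure (or in $L^p$), not $\mu$-a.e.\ convergence, since the latter is not metrizable. The paper avoids this in part~\ref{lem:cvg2-h} by observing that, since $\norm{k}_{\infty,q}<\infty$, the convergence $T_k(\eta_n h_n)(x)\to T_k(\eta h)(x)$ holds for \emph{every} $x$ along the extracted subsequence, so the sub-subsequence principle can be applied pointwise in $x$.

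For part~\ref{lem:cvg2-kgh} there is a genuine gap. You propose to run the same scheme with $T_{k_n}$ in place of $T_k$, asserting that $\{T_{k_n}\}\cup\{T_k\}$ is collectively compact ``combined with the boundedness of the $\gamma_n$''. But the hypotheses give only that each $\gamma_n\in\mathscr{L}^\infty_+$, with no uniform bound; and even granting one, writing $k_n-k=\gamma_n(\kk_n-\kk)+(\gamma_n-\gamma)\kk$ shows you would also need $\gamma_n\to\gamma$, which is nowhere assumed. Worse, passing to the limit in $h_n=v_n/(\gamma_n+v_n)$ again requires control on $\gamma_n-\gamma$. The paper circumvents all of this by dividing through by $\gamma_n$ first: setting $\kk_n=\gamma_n^{-1}k_n$, the fixed-point identity becomes $g_n=\Tinf_{\kk_n}(\eta_n g_n)\big/\bigl(1+\Tinf_{\kk_n}(\eta_n g_n)\bigr)$, the family $\{T_{\kk_n}\}\cup\{T_{\kk}\}$ is collectively compact directly from $\norm{\kk_n-\kk}_{p',q'}\to0$, and no residual $\gamma_n$ appears in the limiting procedure. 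Your sketch is salvageable by making exactly this substitution.
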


\begin{proof}
  The proof of \ref{lem:cvg2-h} and \ref{lem:cvg2-kgh} being rather similar, we only
  provide the latter and indicate the difference when necessary. To simplify, we write
  $g_n=\mathfrak{g}_{\eta_n}[k_n,\gamma_n]$. We set $h_n = \eta_ng_n \in \Delta$ for $n\in
  \N$. Since $\Delta$ is sequentially weakly compact, up to extracting a subsequence, we
  can assume that $h_n$ converges weakly to a limit $h\in \Delta$. Since $F_{\eta_n}[k_n,
  \gamma_n](g_n)=0$ for all $n\in\N$, see~\eqref{eq:F(g)=0}, we have:
  \begin{equation}\label{eq:continuity-I} g_n = \frac{\Tinf_{\kk_n}(\eta_n g_n)}{1+
    \Tinf_{\kk_n}(\eta_n g_n)}
    = \frac{\Tinf_{\kk_n} (h_n)}{1+ \Tinf_{\kk_n}(h_n)} \cdot
  \end{equation}
  We set $g=\Tinf_\kk(h)/(1 + \Tinf_\kk(h))$. Notice that $
  \Tinf_{\kk_n}(h_n)=(\Tinf_{\kk_n}-\Tinf_\kk)(h_n) + \Tinf_\kk(h_n)$. We have
  $\lim_{n\rightarrow\infty} \Tinf_\kk(h_n)=\Tinf_\kk(h)$ pointwise. Since
  $\norm{(\Tinf_{\kk_n}-\Tinf_\kk)(h_n)}_{p'}\leq \norm{\kk_n -\kk}_{p',q'}$, up to taking
  a sub-sequence, we deduce that $\lim_{n\rightarrow\infty}
  (\Tinf_{\kk_n}-\Tinf_\kk)(h_n)=0$ almost surely. (Notice the previous step is not used
  in the proof of \ref{lem:cvg2-h} as $\kk_n=\kk$ and $\lim_{n\rightarrow\infty}
  \Tinf_k(h_n)=\Tinf_k(h) $ pointwise.) This implies that $g_n$ converges almost surely to
  $g$. By the dominated convergence theorem, we deduce that $g_n$ converges also in $L^p$
  to $g$.
  This proves that $h=\eta g$  almost surely. We get
  $g=\Tinf_\kk(\eta g)/(1+ \Tinf_\kk(\eta g))$ and thus $F_{\eta}[k, \gamma](g)=0$: $g$ is
  an equilibrium for $F_\eta[k, \gamma]$. We recall from \cite[Section~3]{ddz-Re} the
  functional equality $R_e[k'h]=R_e[hk']$, where $k'$ is a kernel, $h$ a non-negative
  functions such that the kernels $k'h$ and $hk'$ have some finite double norm. We deduce
  from the weak-continuity and the stability of $R_e$, see Theorem \ref{th:continuity-R}
  and Proposition~\ref{prop:Re-stab}, that:
  \begin{align*}
    R_e[k/\gamma](\eta (1- g))= R_e[\kk](\eta(1-g)) &= \lim_{n\rightarrow\infty }
    R_e[\kk_n](\eta_n (1- g_n))\\
						    & = \lim_{n\rightarrow\infty }
						    R_e[k_n/\gamma_n](\eta_n (1- g_n))\\
						    &\leq 1.
  \end{align*}
  (Only the weak-continuity of $\eta'\mapsto R_e[k/\gamma](\eta')$ is used in the proof of
  \ref{lem:cvg2-h} to get $R_e[k/\gamma](\eta(1-g))\leq 1$.) We deduce that property
  \ref{Rh<1} of Proposition~\ref{prop:caract-g} holds with $k$ replaced by $k\eta$, and
  thus property~\ref{h=g} therein implies that $g =\mathfrak{g}_{\eta}[k,\gamma]$.
\end{proof}

\begin{proof}[Proofs of Theorem \ref{th:continuity-I} and Proposition \ref{prop:I-stab}]
  Under the assumptions of Lemma~\ref{lem:cvgn2}, taking the pair $(k_n,
  \gamma_n)$ equal to $(k,\gamma)$ in the case \ref{lem:cvg2-h} therein, we deduce that
  $(\eta_n\, \mathfrak{g}_{\eta_n}[k_n,\gamma_n], n\in \N)$ converges weakly to
  $\eta \,\mathfrak{g}_{\eta}[k,\gamma]$. This implies that:
  \[ \lim_{n\rightarrow \infty } \I[k_n, \gamma_n](\eta_n)
    = \lim_{n\rightarrow \infty } \int_\Omega \eta_n\,
    \mathfrak{g}_\eta[k_n, \gamma_n]\, \mathrm{d}\mu
    = \int_\Omega \eta\,
    \mathfrak{g}_\eta[k, \gamma]\, \mathrm{d}\mu
    = \I[k, \gamma](\eta).
  \]

Taking $(k_n, \gamma_n)=(k, \gamma)$ provides the continuity of $\I[k,
\gamma]$ and thus Theorem \ref{th:continuity-I}. Then, arguing as in the
end of the proof of Proposition \ref{prop:Re-stab}, we get Proposition
 \ref{prop:I-stab}.
\end{proof}

\subsection{Coupling and Pareto optimality}
\label{sec:proof-coupling}

We prove here Proposition~\ref{prop:coupling-optim}. 
We only consider the SIS model $\param=[(\Omega, \cf, \mu), k, \gamma]$,
as the kernel model can be handled similarly. We suppose throughout this
section that Assumption~\ref{hyp:k-g} holds.

The random variables we consider, are defined on a probability space, say $(\Omega_0,
\cf_0, \P)$. We recall an elementary result on conditional independence. Let $\ca$, $\cb$
and $\ch$ be $\sigma$-fields subsets of $\cf_0$, such that $\ch\subset \ca \cap \cb$.
Then, according to \cite[Theorem 8.9]{Kal21}, we have that for any integrable real-valued
random variable $X$ which is $\cb$-measurable:
\begin{equation}\label{eq:kall}
  \text{$\ca$ and $\cb$ are conditionally independent given $\ch$}
  \, \Longrightarrow\, \esp{X|\, \ca}=\esp{X|\, \ch}.
\end{equation}

We now state two technical lemmas.

\begin{lemma}[Measurability]\label{lem:measurability}
  Let $\param_1$ and $\param_2$ be coupled models with independent coupling $(X_1, X_2)$
  and $(Y_1, Y_2)$. Then the random variable $\gamma_1(X_1)$ is $\sigma(X_1) \cap
  \sigma(X_2)$-measurable. For any measurable function $v:\Omega_1\times \Omega_2\to \R$,
  such that $k_1(X_1,Y_1)v(Y_1, Y_2)$ is integrable, the random variable $
  \esp{k_1(X_1,Y_1)v(Y_1, Y_2)|X_1} $ is also $\sigma(X_1) \cap \sigma(X_2)$-measurable.
\end{lemma}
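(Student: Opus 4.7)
The guiding idea is that a random variable whose law is captured by the coupling inherits two versions: one in $\sigma(X_1)$ and one in $\sigma(X_2)$; identifying them (up to null sets) places it in $\sigma(X_1)\cap\sigma(X_2)$. Throughout, a random variable that is (strictly) $\sigma(X_1)$-measurable and $\P$-a.s.\ equal to a strictly $\sigma(X_2)$-measurable random variable can, after modification on a null set, be taken measurable with respect to both, hence to $\sigma(X_1)\cap\sigma(X_2)$.

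The first claim is immediate: by Definition~\ref{def:couple}, $\gamma_1(X_1)=\gamma_2(X_2)$ $\P$-a.s.; the left-hand side is $\sigma(X_1)$-measurable and the right-hand side is $\sigma(X_2)$-measurable, so the common value is $\sigma(X_1)\cap \sigma(X_2)$-measurable in the above sense.

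For the second claim, set $W=k_1(X_1,Y_1)\,v(Y_1,Y_2)$ and $Z=\esp{W\mid X_1}$; I plan to show
\[
  Z\;=\;\esp{W\mid X_1,X_2}\;=\;\esp{W\mid X_2}\qquad \P\text{-a.s.},
\]
from which the result follows by the observation above. The first equality rests on the independence of $(X_1,X_2)$ and $(Y_1,Y_2)$: conditioning $W$ on $\sigma(X_1,X_2)$ integrates $(Y_1,Y_2)$ out against its law $\pi$ and produces $\varphi_1(X_1):=\int k_1(X_1,y_1)\,v(y_1,y_2)\,\pi(\rd y_1,\rd y_2)$, which depends only on $X_1$, so $\esp{W\mid X_1,X_2}=\varphi_1(X_1)=\esp{W\mid X_1}=Z$. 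For the second equality, I substitute $k_1(X_1,Y_1)=k_2(X_2,Y_2)$ (which holds $\P$-a.s.\ by the coupling hypothesis) to rewrite $W=k_2(X_2,Y_2)\,v(Y_1,Y_2)$ a.s., and apply the same independence argument with $X_2$ in place of $X_1$: this yields $\esp{W\mid X_1,X_2}=\varphi_2(X_2):=\int k_2(X_2,y_2)\,v(y_1,y_2)\,\pi(\rd y_1,\rd y_2)=\esp{W\mid X_2}$. As a by-product we obtain $\varphi_1(X_1)=\varphi_2(X_2)$ $\P$-a.s., so $Z$ is (strictly) $\sigma(X_1)$-measurable and $\P$-a.s.\ equal to the $\sigma(X_2)$-measurable random variable $\varphi_2(X_2)$.

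The integrability assumption on $k_1(X_1,Y_1)\,v(Y_1,Y_2)$ ensures all conditional expectations are well defined and that Fubini may be applied to integrate the coupling identity against $v(y_1,y_2)\,\pi(\rd y_1,\rd y_2)$. The only real subtlety is the book-keeping of the ``modulo null sets'' identification of measurability, which is the standard caveat when intersecting $\sigma$-fields and is harmless for the subsequent conditional-expectation arguments in Section~\ref{sec:proof-coupling}.
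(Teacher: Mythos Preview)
Your proof is correct and follows essentially the same approach as the paper: both establish the chain $\esp{W\mid X_1}=\esp{W\mid X_1,X_2}=\esp{W\mid X_2}$ and conclude that $Z$ is simultaneously $\sigma(X_1)$- and $\sigma(X_2)$-measurable (up to the null-set caveat you rightly flag). The only cosmetic difference is that you invoke the explicit ``freezing'' representation $\esp{W\mid X_1,X_2}=\int k_1(X_1,y_1)v(y_1,y_2)\,\pi(\rd y_1,\rd y_2)$, whereas the paper appeals to an abstract conditional-independence lemma (\cite[Theorem~8.9]{Kal21}) to derive the same equalities; these are two phrasings of the same fact.
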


\begin{proof}
  The $\sigma(X_1) \cap \sigma(X_2)$-measurability of $\gamma_1(X_1)$ is an immediate
  consequence of the almost-sure equality $\gamma_1(X_1) = \gamma_2(X_2)$. Since
  $\esp{k(X_1,Y_1)v(Y_1, Y_2)|X_1}$ is $\sigma(X_1)$-measurable, it remains to prove that
  it is also $\sigma(X_2)$-measurable. Since $(X_1,X_2)$ is independent from $(Y_1,Y_2)$,
  the $\sigma$-fields $\ca=\sigma(X_1,X_2)$ and $\cb=\sigma(X_1,Y_1, Y_2)$ are
  conditionally independent given $\ch=\sigma(X_1)$. Using \eqref{eq:kall}, we deduce
  that:
  \[
    \esp{ k_1(X_1,Y_1)v(Y_1, Y_2) | X_1} = \esp{k_1(X_1,Y_1)v(Y_1, Y_2) | X_1,X_2}.
  \]
  Since $k_1(X_1,Y_1)=k_2(X_2,Y_2)$ $\P$-almost surely, we get:
  \begin{align*}
    \esp{ k_1(X_1,Y_1)v(Y_1, Y_2) | X_1} &= \esp{k_2(X_2,Y_2)v(Y_1, Y_2) | X_1,X_2} \\
					 &= \esp{k_2(X_2,Y_2)v(Y_1, Y_2) | X_2},
  \end{align*}
  where the last equality follows from another application of \eqref{eq:kall} with
  $\ca=\sigma(X_1, X_2)$, $\cb=\sigma(X_2,Y_1,Y_2)$ which are conditionally independent
  given $\ch=\sigma(X_2)$. The last expression is $\sigma(X_2)$ measurable, so the proof
  is complete.
\end{proof}

 In the following key lemma, we simply write
$H_i$ for $H[\param_i]$ for $H$ the loss functions $R_e$ and $\I$, the cost function
$C=\costu$ and the spectrum $\spec$.

\begin{lemma}\label{lem:equivalent_models}
  If $\param_1$ and $\param_2$ are coupled models, and if the functions $\eta_1\in
  \Delta_1$ and $\eta_2\in \Delta_2$ are coupled, then $\spec_1(\eta_1)\cup
  \{0\}=\spec_2(\eta_2)\cup \{0\}$ and for $H$ any one of the mappings $\costu$, $R_e$ or
  $\I$:
  \begin{equation}\label{eq:H1=H2}
    H_1(\eta_1) = H_2(\eta_2).
  \end{equation}
\end{lemma}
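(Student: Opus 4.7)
The plan is to introduce an intermediate model $\tilde\param = [(\Omega_1 \times \Omega_2, \cf_1 \otimes \cf_2, \pi), \tilde k, \tilde\gamma]$ living on the coupling space, where $\pi$ is the common distribution of $(X_1, X_2)$ and $(Y_1, Y_2)$, the kernel $\tilde k((x_1,x_2),(y_1,y_2)) := k_i(x_i,y_i)$ and recovery rate $\tilde\gamma(x_1,x_2):=\gamma_i(x_i)$ are well-defined $\pi$-a.s.\ by Definition~\ref{def:couple}, and the vaccination is $\tilde\eta:=v$, where $V=v(X_1,X_2)$ is the coupling of $\eta_1,\eta_2$ from Remark~\ref{rem:couplage}~\ref{item:V-X1X2}. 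The cost equality is then immediate from the tower property: $\E[\eta_1(X_1)] = \E[\E[V|X_1]] = \E[V] = \E[\eta_2(X_2)]$, hence $\costu_1(\eta_1) = \costu_2(\eta_2)$.

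For the spectrum equality, let $J_i \colon L^p(\mu_i) \to L^p(\pi)$ denote the isometric lift $g \mapsto g \circ \pi_i$ by the $i$-th projection, and write $T_i = T_{\kk_i \eta_i}$ on $L^p(\mu_i)$ and $\tilde T = T_{\tilde\kk \tilde\eta}$ on $L^p(\pi)$ with $\tilde\kk = \tilde k/\tilde\gamma$. Using the disintegration $\pi(dy_1, dy_2) = \mu_i(dy_i)\,\pi(dy_{3-i}|y_i)$ together with the defining identity $\int v(y_1,y_2)\,\pi(dy_{3-i}|y_i) = \eta_i(y_i)$, one checks directly that $\tilde T J_i = J_i T_i$. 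Since $\tilde\kk$ depends on $y_i$ only ($\pi$-a.s.), the range of $\tilde T$ sits inside $J_i(L^p(\mu_i))$; combined with the injectivity of $J_i$ this yields a factorization $\tilde T = J_i S_i$ for some bounded $S_i \colon L^p(\pi) \to L^p(\mu_i)$ with $S_i J_i = T_i$. Jacobson's lemma ($\spec(AB) \cup \{0\} = \spec(BA) \cup \{0\}$ for bounded $A$, $B$ on Banach spaces, extending Lemma~\ref{lem:prop-spec-mult}~\ref{item:spec-adjoint-mult} beyond the compact case) then gives $\spec(\tilde T) \cup \{0\} = \spec(S_i J_i) \cup \{0\} = \spec_i(\eta_i) \cup \{0\}$ for both $i \in \{1,2\}$, proving the spectrum equality; taking spectral radii yields $R_e[\param_1](\eta_1) = R_e[\param_2](\eta_2)$.

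For the $\I$-equality, a parallel computation yields the dynamical intertwining $\tilde F_{\tilde\eta}(J_i h) = J_i F_{\eta_i}(h)$ for $h \in \Delta_i$, so the intermediate SIS flow preserves the subspace of functions depending on $x_i$ alone. Starting from $\un = J_1 \un = J_2 \un$, uniqueness of the ODE solution forces $\tilde u_t = J_1 u^{(1)}_t = J_2 u^{(2)}_t$ for all $t\geq 0$, where $u^{(i)}_t$ solves the SIS equation in $\param_i$ from $\un$. Letting $t \to \infty$ and using that the dynamics from $\un$ decreases to the maximal equilibrium (see \cite[Theorem~4.14]{delmas_infinite-dimensional_2020}), one obtains $J_1 \mathfrak{g}_{\eta_1} = \tilde{\mathfrak{g}} = J_2 \mathfrak{g}_{\eta_2}$ in $L^p(\pi)$, i.e., $\mathfrak{g}_{\eta_1}(X_1) = \mathfrak{g}_{\eta_2}(X_2)$ $\P$-almost surely. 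The proof then concludes with the tower-property identity
\[
  \I_1(\eta_1) = \E[\mathfrak{g}_{\eta_1}(X_1)\,\E[V|X_1]] = \E[\mathfrak{g}_{\eta_1}(X_1) V] = \E[\mathfrak{g}_{\eta_2}(X_2) V] = \E[\mathfrak{g}_{\eta_2}(X_2)\,\E[V|X_2]] = \I_2(\eta_2).
\]
The main obstacle is to justify this intermediate construction at the outset: Definition~\ref{def:couple} only provides an integrable coupling $V$, whereas $\tilde\eta$ must be bounded and non-negative for $\tilde\param$ to satisfy Assumption~\ref{hyp:k-g} and for the intermediate SIS dynamics to stay in $\Delta$. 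A short preliminary step is therefore needed to show that, given $\eta_1,\eta_2 \in [0,1]$, a bounded non-negative (and when available, $[0,1]$-valued) coupling $V$ can always be chosen before running the rest of the argument.
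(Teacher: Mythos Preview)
Your approach via an intermediate model on the product space is genuinely different from the paper's, and conceptually appealing: you factor through $L^p(\pi)$ and use intertwining plus Jacobson's lemma, whereas the paper stays on the two marginal spaces, tracks eigenvectors directly, and leans on the measurability Lemma~\ref{lem:measurability} to transfer them from one side to the other. For $\I$, you run the intermediate ODE from $\un$ and identify its limit, while the paper works with the equilibrium equation and appeals to Proposition~\ref{prop:caract-g} (the $R_e(1-h)\leq 1$ characterization) to certify that the transferred equilibrium is maximal.

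However, the ``short preliminary step'' you postpone is not short, and as stated it is false. Definition~\ref{def:couple} only guarantees an \emph{integrable} $V$, and there exist coupled models and coupled strategies $\eta_1,\eta_2\in[0,1]$ for which no $[0,1]$-valued coupling $V$ exists. Take $\Omega_1=\Omega_2=\{a,b\}$ with uniform measures, $\pi=\mu_1\otimes\mu_2$ the independent coupling, and $k_i,\gamma_i$ constant (so the model coupling condition $k_1(X_1,Y_1)=k_2(X_2,Y_2)$, $\gamma_1(X_1)=\gamma_2(X_2)$ is trivially satisfied). For $\eta_1=\eta_2=\ind{\{a\}}$, the linear system $\E[v(X_1,X_2)\mid X_i]=\eta_i(X_i)$ forces $v(a,a)=2$ whenever $v\geq 0$, so no $[0,1]$-valued $v$ exists. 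In this finite example a bounded non-negative $v$ does exist, but you give no argument that this happens in general; without it, neither $S_i$ nor the intermediate SIS dynamics (which requires $\tilde\eta\in\Delta$, or at least a reinterpretation with kernel $\tilde k\,v$ under Assumption~\ref{hyp:k-g}) is justified. The paper's route sidesteps the issue entirely: it never builds an operator on $L^p(\pi)$ involving $v$, using $V$ only through conditional expectations inside expectations, where mere integrability suffices.
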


\begin{proof}
  Let $(X_1, X_2)$ and $(Y_1, Y_2)$ be two independent couplings, and assume that $\eta_1$
  and $\eta_2$ are coupled through the function $\eta$, see
  Remark~\ref{rem:couplage}~\ref{item:V-X1X2}:
  \begin{equation}
    \label{eq:h=h1=h2}
    \esp{\eta(X_1, X_2)|\, X_i}=\eta_i(X_i) \quad\text{for $i\in \{1, 2\}$}.
  \end{equation}

  \textit{Step 1:} The cost function ($H=\costu$). We directly have:
  \[
    C_1(\eta_1) = 1-\esp{\eta_1(X_1)} = 1-\esp{\eta(X_1, X_2)}
    = 1-\esp{\eta_2(X_2)} = C_2(\eta_2).
  \]

  \textit{Step 2:} The spectrum and the effective reproduction function ($H=R_e$). Set
  $\kk_i=k_i/\gamma_i$ for $i\in \{1, 2\}$. Let $\lambda$ be a non-zero eigenvalue of
  $T_{\kk_1 \eta_1}$ associated with an eigenvector $v_1$. Notice that $\kk(X_1,Y_1)
  \eta_1(Y_1) v(Y_1)$ is integrable thanks to the integrability condition from Assumption
  \ref{hyp:k-g}. By definition of eigenvectors, $v_1(X_1)$ is a version of the conditional
  expectation:
  \[
    \lambda^{-1} \esp{\kk_1(X_1,Y_1)\, \eta_1(Y_1) v_1(Y_1)|X_1}.
  \]
  By Lemma~\ref{lem:measurability} applied to the function $v(y_1,
  y_2)=(v_1\eta_1/\gamma_1)(y_1)$, the real-valued random variable $v_1(X_1)$ is
  $\sigma(X_1) \cap \sigma(X_2) $-measurable and thus $\sigma(X_2)$-measurable. Thanks to
  \eqref{eq:def-f-phi0}, there exists $v_2$ such that $v_2(X_2) = v_1(X_1)$ almost surely.
  Since $(Y_1, Y_2)$ is distributed as $(X_1, X_2)$, we deduce that \eqref{eq:h=h1=h2}
  holds also with $(X_1, X_2)$ replaced by $(Y_1, Y_2)$ and that $v_2(Y_2) =
  v_1(Y_1)$ almost surely. Recall that $\kk_i=k_i/\gamma_i$, so that $\kk_1(X_1, Y_1) =
  \kk_2(X_2, Y_2)$ almost surely. We may now compute:
  \begin{equation}\label{eq:from1to2}
    \begin{aligned}
      \lambda v_2(X_2) &= \lambda v_1(X_1) \\
		       &= \esp{ k_1(X_1,Y_1)\,\eta_1(Y_1) v_1(Y_1) | X_1} \\
		       &= \esp{ \kk_1(X_1,Y_1)\,\eta(Y_1, Y_2) v_1(Y_1) | X_1}
		       &\text{(de-conditioning on~$(Y_1, X_1)$)}\\
		       &= \esp{\kk_1(X_1,Y_1)\,\eta(Y_1, Y_2) v_1(Y_1) | X_2}
		       & \text{(Lemma \ref{lem:measurability})} \\
		       &= \esp{\kk_2(X_2,Y_2)\, \eta(Y_1, Y_2) v_2(Y_2) | X_2}
		       & \text{(a.s. equality)}\\
		       &= \esp{\kk_2(X_2,Y_2)\, \eta_2(Y_2)v_2(Y_2)|X_2}
		       &\text{(conditioning on~$(Y_2, X_2)$)} \\
		       &= T_{\kk_2\eta_2}v_2(X_2).
    \end{aligned}
  \end{equation}
  Since the distribution of~$X_2$ is~$\mu_2$, we have $\lambda v_2
  = T_{\kk_2\eta_2}v_2$ $\mu_2$-almost surely. Therefore~$\lambda$ is also an
  eigenvalue for~$T_{\kk_2\eta_2}$. By symmetry we deduce that the spectrum up to $\{0\}$
  of~$T_{\kk_1\eta_1}$ and~$T_{\kk_2\eta_2}$ coincide, that is $\spec_1(\eta_1)\cup
  \{0\}=\spec_2(\eta_2)\cup \{0\}$, and in particular the spectral radius coincide.

 \medskip

  \textit{Step 3:} The total proportion of infected population function ($H=\I$). We
  assume without loss of generality that~$\rho(\Tinf_{k_1/\gamma_1}) >1$, which is
  equivalent to~$\rho(\Tinf_{k_2/\gamma_2})>1$, thanks to~\eqref{eq:H1=H2} with~$H=R_e$
  and~$\eta_1=\eta_2=1$. Let~$g_1 = \mathfrak{g}_{\eta_1}$ be the maximal equilibrium for
  the model~$\param_1$. Since~$F_{\eta_1}(g_1)=0$, see~\eqref{eq:F(g)=0}, we have:
  \begin{equation}\label{eq:continuity-I0-g1}
    g_1 = \frac{\Tinf_{k_1}(\eta_1 g_1)}{\gamma_1 + \Tinf_{k_1}(\eta_1 g_1)}\cdot
  \end{equation}
  By Lemma~\ref{lem:measurability}, this implies that~$g_1(X_1)$ is $\sigma(X_1)\cap
  \sigma(X_2)$ measurable. Thus, there exists~$g'_2$ such that $g'_2(X_2) =
  g_1(X_1)$ $\P$-almost surely.. Therefore, by the same computation as
  in~\eqref{eq:from1to2}:
  \[
    \Tinf_{k_1}(\eta_1 g_1)(X_1) = \Tinf_{k_2}(\eta_2 g'_2)(X_2) \quad \P-\text{a.s.}
  \]
  We set:
  \begin{equation}\label{eq:continuity-I0-g2}
    g_2 = \frac{\Tinf_{k_2}(\eta_2 g'_2)}{\gamma_2 + \Tinf_{k_2}(\eta_2 g'_2)}\cdot
  \end{equation}
  Then, we deduce from \eqref{eq:continuity-I0-g1} that $g_2(X_2)=g'_2(X_2)$~$\P$-almost
  surely, that is $g_2 = g'_2$~$\mu_2$-almost surely. Thus \eqref{eq:continuity-I0-g2}
  holds with $g'_2$ replaced by $g_2$. In other words,~$g_2$ satisfies~\eqref{eq:F(g)=0}:
  it is an equilibrium for the model given by~$\param_2$.

  Let us now prove that~$g_2$ is in fact the maximal equilibrium. Since
  $g_2(X_2) = g_1(X_1)$~$\P$-almost surely and $g_1(X_1)$ is~$\sigma(X_1)\cap
  \sigma(X_2)$-measurable, we deduce from Remark~\ref{rem:couplage}~\ref{item:V1},
  that~$(1-g_1)$ and~$(1-g_2)$ are coupled, so $R_e[\param_1](1-g_1) =
  R_e[\param_2](1-g_2)$, by Property~\eqref{eq:H1=H2} applied to~$H=R_e$. Since~$R_0>1$
  and~$g_1$ is the maximal equilibrium for~$\param_1$, we deduce from Proposition
  \ref{prop:caract-g} that~$R_e[\param_1](1- g_1)=1$. Using again Proposition
  \ref{prop:caract-g}, this gives that~$g_2$ is the maximal equilibrium for~$\param_2$.

  We may now compute:
  \begin{align*}
    \I_1(\eta_1) &= \esp{\eta_1(X_1)\, g_1(X_1)} &\\
		 &= \esp{\eta(X_1, X_2)\, g_1(X_1)} & \text{(deconditioning on~$X_1$)} \\
		 &= \esp{\eta(X_1, X_2)\, g_2(X_2)} & \text{($\as$ equality)} \\
		 &= \esp{\eta_2(X_2)\,g_2(X_2)} & \text{(conditioning on~$X_2$)} \\
		 &= \I_2(\eta_2),
  \end{align*}
  thus~\eqref{eq:H1=H2} holds for~$H=\I$, and the proof is complete.
\end{proof}

We now give the proof of Proposition~\ref{prop:coupling-optim}. Its first part is an
elementary consequence of the Lemma~\ref{lem:equivalent_models}; and the second part is a
direct consequence of Remark~\ref{rem:couplage}~\ref{item:V2}.

\printbibliography

\end{document}